\title{Hardy space of translated Dirichlet series}
\author{Tom\'as Fern\'andez Vidal \and Daniel Galicer \and Mart\'in Mereb \and Pablo Sevilla-Peris}
\date{}
\DeclareMathOperator{\id}{id}
\DeclareMathOperator{\re}{Re}
\newtheorem{teo}{Theorem}[section]
\newtheorem{lem}[teo]{Lemma}
\newtheorem{prop}[teo]{Proposition}
\newtheorem{cor}[teo]{Corollary}
\theoremstyle{definition}
\newtheorem{ej}[teo]{Example}
\newtheorem{obs}[teo]{Remark}
\def\C{\mathbb{C}}
\def\N{\mathbb{N}}
\def\eps{\varepsilon}
\begin{document}
	
\maketitle

\begin{abstract}
We study the Hardy space of translated Dirichlet series $\mathcal{H}_{+}$. It consists on those Dirichlet series $\sum a_n n^{-s}$ such that for some (equivalently, every) $1 \leq p < \infty$, the translation $\sum{a_{n}}n^{-(s+\frac{1}{\sigma})}$ belongs to the Hardy space $\mathcal{H}^{p}$ for every $\sigma>0$.  We prove that this set, endowed with the topology induced by the seminorms $\left\{\Vert\cdot\Vert_{2,k}\right\}_{k\in\mathbb{N}}$ (where $\Vert\sum{a_{n}n^{-s}}\Vert_{2,k}$ is defined as  $\big\Vert\sum{a_n n^{-(s+\frac{1}{k})}} \big\Vert_{\mathcal{H}^{2}}$), is a Fr\'echet space which is Schwartz and non nuclear. Moreover, the Dirichlet monomials $\{n^{-s}\}_{n \in \mathbb N}$ are an unconditional Schauder basis of $\mathcal H_+$. 
We also explore the connection of this new space with spaces of holomorphic functions on infinite-dimensional spaces.\\
In the spirit of Gordon and Hedenmalm's work, we completely characterize the composition operator on the Hardy space of translated Dirichlet series. Moreover, we study the superposition operators on $\mathcal{H}_{+}$ and show that every polynomial defines an operator of this kind. We present certain sufficient conditions on the coefficients of an entire function to define a superposition operator. Relying on  number theory techniques we exhibit some examples which do not provide superposition operators. We finally look at the action of the differentiation and integration operators on these spaces. 
\end{abstract}

\footnotetext[0]{\textit{Keywords:} Dirichlet series, Hardy space, Fr\'echet space, composition operator, superposition operator\\
\textit{2010 Mathematics subject classification:} Primary: 46A04 Secondary: 30B50, 32A10, 46A11, 46A45, 46G20, 47B33}
	
\section{Introduction}
	
A Dirichlet series is a formal series of the form $D=\sum a_{n} n^{-s}$, where the coefficients $a_{n}$ are complex numbers and $s$ is a complex variable. Introduced by Dirichlet in the second half of the 19th century, they played a fundamental r\^ole in the development 
of the analytic theory of numbers. In the beginning of the 20th century they started to be studied from the point of view of complex analysis. It was then proved 
(see e.g. \cite{queffelec2013diophantine} or \cite{defant2018Dirichlet}) that Dirichlet series converge on half planes and that, then, for each Dirichlet series $D$ there 
exists $\sigma_{c}(D) \in \mathbb{R}\cup\{\pm \infty\}$ so that the series $\sum_{n=1}^{\infty} a_{n} \frac{1}{n^{s}}$ converges for every $\re s > \sigma_{c}(D)$ and diverges for every $\re s < \sigma_{c}(D)$. For a given $\sigma \in \mathbb{R}$ we denote
\[
	\mathbb{C}_{\sigma} = \{ s \in \mathbb{C} \colon \re s > \sigma  \} \,.
\]
We will write $\mathbb{C}_{+}$ instead of $\mathbb{C}_{0}$. With this notation $\mathbb{C}_{\sigma_{c}(D)}$ is the maximal half plane of convergence of $D$, and $D(s)$ defines a holomorphic function on  $\mathbb{C}_{\sigma_{c}(D)}$. With the same idea, there is an abscissa $\sigma_{a}(D)$ that defines the
maximal half plane of absolute convergence of the series, and an abscissa $\sigma_{u}(D)$ so that the Dirichlet series converges uniformly on $\mathbb{C}_{\sigma}$ for every $\sigma > \sigma_{u}(D)$. \\
	
By the end of the 1990s a deep relation between Dirichlet series and different parts of analysis (mainly harmonic and functional analysis) was discovered. Ever since, this interaction has shown to be very fruitful, with interesting results in both sides. A key element in this development are the Hardy 
spaces $\mathcal{H}^{p}$ of Dirichlet series, introduced by Hedenmalm, Lindqvist and Seip for $p=2$ and $p=\infty$ \cite{hedenmalm1995hilbert} and by Bayart for the remaining cases in $1 \leq  p \leq \infty$ \cite{bayart2002hardy}. Let us 
briefly recall the definition of these spaces. The space $\mathcal{H}^{\infty}$ is defined as consisting of Dirichlet series that define a bounded holomorphic function on $\mathbb{C}_{+}$. Given $1 \leq p < \infty$, the expresion
\[
	\Big\Vert \sum_{n=1}^{N} a_{n} n^{-s} \Big\Vert_{p}
	= \lim_{R \to \infty} \bigg( \frac{1}{2R} \int_{-R}^{R} \Big\vert \sum_{n=1}^{N} a_{n} n^{-it} \Big\vert^{p}  dt  \bigg)^{\frac{1}{p}} \,.
\]
defines a norm on the space of Dirichlet polynomials (i.e. finite Dirichlet series). Then the space $\mathcal{H}^{p}$ is defined as the completion of the Dirichlet polynomials under this norm. Let us note that $\mathcal{H}^{2}$ consists exactly of those Dirichlet series $\sum a_{n} n^{-s}$ for which $(a_{n})_{n} \in \ell_{2}$.\\
Translation is a useful tool within the theory. Given a Dirichlet series $D=\sum a_{n} n^{-s}$ and $\sigma \in \mathbb{R}$ we define the following Dirichlet series
\[
	D_{\sigma} = \sum \frac{a_{n}}{n^{1/\sigma}} n^{-s} \,.
\]
Note that $\sigma_{c} (D_{\sigma}) = \sigma_{c}(D) - \frac{1}{\sigma}$ and, for $\re s > \sigma_{c}(D) - \frac{1}{\sigma}$ we have
\begin{equation} \label{charpentier}
	D_{\sigma} (s) = \sum_{n=1}^{\infty} a_{n} \frac{1}{n^{s + 1/\sigma}}
	= D \big(s + \tfrac{1}{\sigma} \big) \,.
\end{equation}
A key property in this setting is that a Dirichlet series $D$ belongs to $\mathcal{H}^{p}$ if and only if 
\[
	D_{\sigma} \in \mathcal{H}^{p} \text{ for every } \sigma >0 \text{ and } \sup_{\sigma >0} \Vert D_{\sigma} \Vert_{p}< \infty \,.
\]
A natural question then arises: `what happens if we drop the second condition?' or, to be more precise: `what kind of structure do we find when we consider those Dirichlet series for which all translations belong to $\mathcal{H}^{p}$?'. This 
task was overcome by Bonet for $p=\infty$ in \cite{bonet2018frechet}. There the Fr\'echet space $\mathcal{H}_{+}^{\infty}$ of Dirichlet series $D$ so that $D_{\sigma} \in \mathcal{H}^{\infty}$ for every $\sigma >0$ was defined. Several properties of this space, as well as composition operators defined on these spaces were studied. Our aim in this note is to complete this 
study, extending this to $\mathcal{H}^{p}$  for $1 \leq p < \infty$. We define for each $p$ the corresponding Fr\'echet space with the same idea and show that, in fact, they are all isomorphic. So, we denote this space by  $\mathcal{H}_{+}$ and study in Theorem~\ref{teo: caract de esp} certain properties. In particular, we investigate its relation with spaces of holomorphic functions defined on infinite dimensional Banach spaces.\\
	
Given two open sets $\Omega_{1} , \Omega_{2} \subseteq \mathbb{C}$ and a holomorphic function $\phi : \Omega_{1} \to \Omega_{2}$, we have that $f \circ \phi$ is holomorphic for every holomorphic function $f$. This defines in a natural way 
an operator $C_{\phi}$ (called \textit{composition operator of symbol $\phi$}) given by $f \mapsto f \circ \phi$. These are by now very classical objects that have been studied from several points of view. If $\phi : \mathbb{C}_{\sigma_{1}} \to \mathbb{C}_{\sigma_{2}}$ is holomorphic and $f$ is represented by a Dirichlet series on $\mathbb{C}_{\sigma_{2}}$, then $f \circ \phi$ 
is holomorphic on $\mathbb{C}_{\sigma_{1}}$ but may not be represented as a Dirichlet series. One may then wonder under what circumstances is $f \circ \phi$ represented by a Dirichlet series on $\mathbb{C}_{\sigma_{1}}$. Or, to put it in other terms, if $\mathcal{F}$ is some space of Dirichlet series converging on $\mathbb{C}_{\sigma_{2}}$, to find conditions on $\phi$ so that $C_{\phi}$ defines a composition operator acting on $\mathcal{F}$ and taking values on some other space of Dirichlet series. 	
The first ones to address this question were Gordon and Hedenmalm in \cite{gordon1999composition}, who characterised those $\phi$ for which $C_{\phi}$ is a well defined (and continuous) operator on $\mathcal{H}^{2}$ taking values on $\mathcal{D}$ (see below for the definition) or on $\mathcal{H}^{2}$ and studied some of their properties. This study was later extended by Bayart for $\mathcal{H}^{p}$ with $p\neq 2$ in \cite{bayart2002hardy} and Bonet for $\mathcal{H}^{\infty}_{+}$ in \cite{bonet2018frechet}. In Section~\ref{sect:compop} we perform an analogous study for operators $C_{\phi}$ defined on $\mathcal{H}_{+}$, giving conditions on $\phi$ so that the compostion operator $\mathcal{H}_{+} \to \mathcal{H}_{+}$ is well defined, continuous or bounded. We also study when the composition operator takes values in $\mathcal{H}^{\infty}_{+} $ or $\mathcal{H}^{p}$ for some $p$.\\
Superposition operators $S_{\varphi}$ are defined with a similar idea, but with a different point of view. Given a function $\varphi$, these are defined as $f \mapsto \varphi \circ f$. When two spaces of functions are fixed as domain and range, the questions are the same as before: to find conditions on the symbol $\varphi$ so that the operator $S_{\varphi}$ is well defined and continuous. For operators on $\mathcal{H}^{\infty}$and between $\mathcal{H}^{p}$ and $\mathcal{H}^{q}$these questions were addressed in \cite{bayart2019composition}. It was shown there that  $\varphi$ defines a superposition operator $\mathcal{H}^{\infty} \to \mathcal{H}^{\infty}$ if and only if $\varphi$ is entire, whereas those $\varphi$s leading to superposition operators between $\mathcal{H}^{p} \to \mathcal{H}^{q}$ correspond exclusively to certain polynomials (where the degree depends on the involved space). In Section~\ref{sec:super} we study superposition operators defined on $\mathcal{H}_{+}$.  
We show that if $\varphi$ defines a superposition operator $\mathcal{H}_{+} \to \mathcal{H}_{+}$, then it has to be entire. Moreover, $\mathcal{H}_{+}$ results a Fr\'echet algebra and therefore, in particular, every polynomial defines a superposition operator. On the other hand, opposite to the case of $\mathcal{H}^{\infty}$ (or also $\mathcal{H}^{\infty}_{+}$), we exhibit entire functions that do not define operators of this type.\\
A very classical operator acting on spaces of differentiable functions is the differentiation operator, $f \mapsto f'$. The action of this operator and of its inverse on $\mathcal{H}_{+}^{\infty}$ has been studied in \cite{bonetoperator}. In Section~\ref{sect:dif integ} we perform a similar study within this context.

Before we proceed let us fix some basic notation that will be used all along this note. We write $\mathcal{D}$ for the space of Dirichlet series that converge at some point (and then define a holomorphic function on some half plane). We denote $\mathbb{N}_{0} = \mathbb{N} \cup \{0\}$ and $\mathbb{N}_{0}^{(\mathbb{N})} = \bigcup_{n=0}^{\infty} \mathbb{N}_{0}^{n} \times \{0\}$ is the set of multi-indices of arbitrary length with non-negative integer entries. Then the notation $\mathbb{Z}^{(\mathbb{N})}$ is clear. Given a sequence of complex numbers $z = (z_{n})_{n}$ and $\alpha = (\alpha_{1}, \ldots, \alpha_{N}, 0,0 , \ldots) \in \mathbb{Z}^{(\mathbb{N})}$ we denote $z^{\alpha} = \prod_{n=1}^{N} z_{n}^{\alpha_{n}}$. We write $\mathbb{D} (z, r)$ for the disc in the complex plane centred at $z$ and with radius $r$, and $\mathbb{T}$ for $\partial \mathbb{D}(0,1)$.
Finally, $\mathfrak{p} = (\mathfrak{p}_{j})_{j}$ denotes the sequence of prime numbers; sometimes $\mathfrak{p}$ will denote a particular primer number.
	
\section{Spaces of translated Dirichlet series}

As we already mentioned before, our aim is to consider the spaces of those Dirichlet series whose translations belong to $\mathcal{H}^{p}$. Abscissas are a quite convenient way to formulate this idea. Let us note that the space defined by Bonet in \cite{bonet2018frechet} can be reformulated as $\mathcal{H}_{+}^{\infty} = \{ D \in \mathcal{D} \colon \sigma_{u} (D) \leq 0  \}$. For $1 \leq p < \infty$, we define the $\mathcal{H}^{p}$-abscissa of a Dirichlet series $D = \sum a_{n} n^{-s}$ as
\[
\sigma_{p} (D) = \inf \big\{ \sigma \colon \textstyle \sum \frac{a_{n}}{n^{\sigma}} n^{-s} \in \mathcal{H}^{p}  \big\} \,,
\]
and define the space
\[
\mathcal{H}^{p}_{+} =  \{ D \in \mathcal{D} \colon \sigma_{p} (D) \leq 0  \} \,.
\]
For each $k \in \mathbb{N}$ we consider the seminorm 
\[
\Big\Vert \sum{a_{n}n^{-s}} \Big\Vert_{p,k} = \Big\Vert\sum{\frac{a_n}{n^{\frac{1}{k}}}n^{-s}} \Big\Vert_{\mathcal{H}^{p}} \,.
\]
By \cite[Proposition~11.20]{defant2018Dirichlet} we have
$\Vert \cdot \Vert_{p,k} \leq \Vert \cdot \Vert_{p,k+1}$, and we may endow  $\mathcal{H}^{p}_{+}$ with the Fr\'echet topology defined by the  family of seminorms $\left\{\Vert\cdot\Vert_{p,k}\right\}_{k\in\mathbb{N}}$. 
	
\begin{obs} \label{fok}
If a Dirichlet series $\sum a_{n}n^{-s}$ belongs to $\mathcal{H}^{p}$ for some $1 \leq p < \infty$, then the translated series $\sum \frac{a_{n}}{n^{\sigma}}n^{-s}$ belongs to $\mathcal{H}^{q}$ for all $\sigma>0$ and every $p<q<\infty$ (see \cite[Section~3]{bayart2002hardy} or \cite[Theorem~12.9]{defant2018Dirichlet}). 
\end{obs}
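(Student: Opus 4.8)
The plan is to transport the problem to the infinite polytorus through the Bohr lift and then invoke the hypercontractivity of the Poisson (dilation) semigroup. Factoring each $n=\mathfrak{p}^{\alpha}$ into primes, the Bohr correspondence identifies $\mathcal{H}^{r}$ isometrically with the Hardy space $H^{r}(\mathbb{T}^{\infty})$ on the infinite polytorus, carrying $D=\sum a_{n}n^{-s}$ to the function $f$ whose monomial coefficients are $a_{\mathfrak{p}^{\alpha}}$. Since $n^{-\sigma}=\prod_{j}(\mathfrak{p}_{j}^{-\sigma})^{\alpha_{j}}=\rho^{\alpha}$ with $\rho=(\mathfrak{p}_{j}^{-\sigma})_{j}$, the translated series $\sum a_{n}n^{-\sigma}n^{-s}$ corresponds exactly to the dilation $f_{\rho}(z)=f(\rho z)$ (coordinatewise product). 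Hence it is enough to prove that $f\mapsto f_{\rho}$ maps $H^{p}(\mathbb{T}^{\infty})$ into $H^{q}(\mathbb{T}^{\infty})$ whenever $q>p$.

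The driving estimate is Weissler's hypercontractive inequality: in one variable the dilation $g\mapsto g(r\,\cdot)$ is a contraction $H^{p}(\mathbb{T})\to H^{q}(\mathbb{T})$ provided $r\le\sqrt{p/q}$, and this contractivity tensorises to the polytorus. Because $\sigma>0$ forces $\rho_{j}=\mathfrak{p}_{j}^{-\sigma}\to0$, at most finitely many coordinates, say $j\le J$, can fail the bound $\rho_{j}\le\sqrt{p/q}$. I would split the variables accordingly and factor $M_{\rho}=M_{\mathrm{head}}\circ M_{\mathrm{tail}}$, where $M_{\mathrm{head}}$ dilates the finitely many coordinates $j\le J$ and $M_{\mathrm{tail}}$ dilates the coordinates $j>J$, whose radii now all lie below $\sqrt{p/q}$. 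The head is essentially free: a dilation by radii strictly smaller than $1$ in finitely many variables extends the function holomorphically, in those variables, to a larger polydisc, so $M_{\mathrm{head}}$ maps $H^{p}$ boundedly into $H^{\infty}\subseteq H^{q}$.

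The real work is to glue the two blocks together, since they act on disjoint sets of variables while the exponent must increase from $p$ to $q$; a careless application of Fubini leads to a mixed norm in which the $L^{p}$ and $L^{q}$ norms sit in the wrong order. The way I would resolve this is to dominate the tail dilation of $f$ pointwise by the Poisson average of $\vert f\vert$ over the tail torus (Minkowski's integral inequality), thereby reducing the tail step to the hypercontractivity of the positive Poisson operator $A_{\rho}\colon L^{p}\to L^{q}$, which again follows by tensorising the one variable estimate once $J$ is large enough. Integrating first in the head variables via the boundedness of $M_{\mathrm{head}}$ and then in the tail variables via $\Vert A_{\rho}\Phi\Vert_{L^{q}}\le\Vert\Phi\Vert_{L^{p}}$ for $\Phi(w)=\Vert f(\cdot,w)\Vert_{L^{p}}$ yields $\Vert f_{\rho}\Vert_{H^{q}}\le C\,\Vert f\Vert_{H^{p}}$, which is the desired conclusion. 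The main obstacle is precisely this interchange of integrations, together with the tensorisation of the hypercontractive contraction to infinitely many variables; both are carried out in \cite[Section~3]{bayart2002hardy} and \cite[Theorem~12.9]{defant2018Dirichlet}, along whose lines the argument runs.
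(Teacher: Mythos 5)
Your argument is correct and is essentially the proof behind the references the paper cites for this remark: the paper itself offers no proof beyond pointing to \cite[Section~3]{bayart2002hardy} and \cite[Theorem~12.9]{defant2018Dirichlet}, and those sources (as well as the authors' own Proposition~\ref{obs: igualdal de espacios}, whose constant $\prod_{j\le j_0}(1-\mathfrak{p}_j^{-1/(2k)})^{-1}$ reflects exactly your head/tail split) proceed precisely as you do, via the Bohr lift, Weissler's hypercontractive contraction on the coordinates with $\mathfrak{p}_j^{-\sigma}\le\sqrt{p/q}$, and a crude $H^p\to H^\infty$ bound on the finitely many remaining ones. No substantive difference to report.
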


The previous observation implies that $\sigma_{p}(D)= \sigma_{q}(D)$ for every $1 \leq p,q < \infty$ and, in particular, all $\mathcal{H}^{p}_{+}$ for $1 \leq p < \infty$ are equal as vector spaces. 
Since the inclusion $\mathcal{H}^{q} \hookrightarrow \mathcal{H}^{p}$ is continuous, the inclusion  $\mathcal{H}^{q}_{+}\hookrightarrow\mathcal{H}^{p}_{+}$  between the Fr\'echet spaces is also continuous. Then an application of the Open Mapping Theorem shows that they are all isomorphic. So, we are dealing with only one Fr\'echet space (that we will denote $\mathcal{H}_{+}$) whose topology can be defined by different families of seminorms $\left\{\Vert\cdot\Vert_{p,k}\right\}_{k\in\mathbb{N}}$. In the following proposition we show how these seminorms relate with each other. Before we proceed, let us briefly recall some basic facts that we are going to use. First of all, we consider $\mathbb{T}^{\infty}$, the infinite product of $\mathbb{T}$ on which we consider the product of the normalised Lebesgue measures. Given a function $f \in L_{1} (\mathbb{T}^{\infty})$ and $\alpha \in \mathbb{Z}^{(\mathbb{N})}$ we consider the Fourier coefficient
\[
	\hat{f}(\alpha) = \int_{\mathbb{T}^{\infty}} f(z) z^{-\alpha} dz \,.
\]
It is a well known fact (see e.g. \cite[Chapter~11]{defant2018Dirichlet}) that for every $1 \leq p \leq \infty$ the Hardy space
\[
	H_{p} (\mathbb{T}^{\infty}) = \{ f \in  L_{p} (\mathbb{T}^{\infty}) \colon \hat{f} (\alpha) = 0 \text{ for every } \alpha \in \mathbb{Z}^{(\mathbb{N})} \setminus \mathbb{N}_{0}^{(\mathbb{N})} \}
\]
is isometrically isomorphic to $\mathcal{H}^{p}$.

\begin{prop}\label{obs: igualdal de espacios}
For every  $1\leq p \leq q <\infty$ and $k \in \mathbb{N}$ there exists $C_{k,p,q}>1$ so that 
\begin{equation} \label{seminormas}
	\Big\Vert  \sum a_{n} n^{-s} \Big\Vert_{p,k} 
	\leq \Big\Vert \sum a_{n} n^{-s} \Big\Vert_{q,k} 
	\leq C_{k,p,q} \Big\Vert \sum a_{n} n^{-s} \Big\Vert_{p,2k} \,,
\end{equation}
for every Dirichlet series in $\mathcal{H}_{+}$.
\end{prop}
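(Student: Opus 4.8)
The plan is to treat the two inequalities in \eqref{seminormas} separately: the left one is soft, while the right one carries all the content. Throughout I would work through the Bohr identification $\mathcal{H}^{r}\cong H_{r}(\mathbb{T}^{\infty})$ recalled above, which turns the seminorms into $L_{r}(\mathbb{T}^{\infty})$-norms of (shifted) functions on a probability space.

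For the left inequality, note that both $\Vert\cdot\Vert_{p,k}$ and $\Vert\cdot\Vert_{q,k}$ are computed on the \emph{same} shifted series $\sum \frac{a_{n}}{n^{1/k}}n^{-s}$, namely as its $\mathcal{H}^{p}$- and $\mathcal{H}^{q}$-norms. Since $\mathbb{T}^{\infty}$ carries a probability measure, Jensen's inequality (equivalently, the contractive inclusion $L_{q}(\mathbb{T}^{\infty})\hookrightarrow L_{p}(\mathbb{T}^{\infty})$ for $p\le q$) gives $\Vert f\Vert_{H_{p}}\le\Vert f\Vert_{H_{q}}$ for every $f\in H_{q}(\mathbb{T}^{\infty})$, and the left inequality follows at once.

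For the right inequality I would first perform a purely formal reduction. Write $E=\sum b_{n}n^{-s}$ with $b_{n}=a_{n}n^{-1/(2k)}$, so that $\Vert D\Vert_{p,2k}=\Vert E\Vert_{\mathcal{H}^{p}}$. Since $\frac{a_{n}}{n^{1/k}}=\frac{b_{n}}{n^{1/(2k)}}$, the series defining $\Vert D\Vert_{q,k}$ is exactly the translation $E_{2k}=\sum\frac{b_{n}}{n^{1/(2k)}}n^{-s}$ of $E$ by the \emph{positive} amount $1/(2k)$, whence $\Vert D\Vert_{q,k}=\Vert E_{2k}\Vert_{\mathcal{H}^{q}}$. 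Thus it suffices to bound the translation operator $T\colon F\mapsto F_{2k}$ as a map $\mathcal{H}^{p}\to\mathcal{H}^{q}$: if $\Vert T(E)\Vert_{\mathcal{H}^{q}}\le M\Vert E\Vert_{\mathcal{H}^{p}}$ for all $E\in\mathcal{H}^{p}$, the right inequality holds with any $C_{k,p,q}=\max\{M,2\}>1$. The essential point is only that the gap $1/k-1/(2k)=1/(2k)$ between the two shifts is strictly positive.

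The heart of the matter is therefore the boundedness of $T$, and this is the step I expect to be the main obstacle; everything else is bookkeeping. When $p=q$ it is immediate from the monotonicity $\Vert\cdot\Vert_{p,k}\le\Vert\cdot\Vert_{p,2k}$ recorded before the statement, so $M=1$ works. When $p<q$, Remark~\ref{fok} guarantees that $T$ is everywhere defined as a map $\mathcal{H}^{p}\to\mathcal{H}^{q}$, but only \emph{qualitatively}; to upgrade this to a norm bound I would invoke the closed graph theorem, both spaces being Banach. The graph is closed because the coefficient functionals $D\mapsto a_{n}$ are continuous on each $\mathcal{H}^{r}$ (indeed $\vert a_{n}\vert=\vert\hat{f}(\alpha)\vert\le\Vert f\Vert_{L_{1}(\mathbb{T}^{\infty})}\le\Vert f\Vert_{L_{r}(\mathbb{T}^{\infty})}$ via the Bohr lift), while $T$ acts as the coordinatewise multiplier $a_{n}\mapsto a_{n}n^{-1/(2k)}$; so if $E_{j}\to E$ in $\mathcal{H}^{p}$ and $T(E_{j})\to G$ in $\mathcal{H}^{q}$, comparing coefficients forces $G=T(E)$. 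This yields the finite constant $M=\Vert T\Vert$ and completes the argument. Should one instead insist on an explicit $C_{k,p,q}$, the closed graph step would have to be replaced by a direct quantitative hypercontractivity estimate for the translation semigroup on $H_{p}(\mathbb{T}^{\infty})$, which is considerably more laborious and unnecessary for the stated existence claim.
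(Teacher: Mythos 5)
Your argument is correct, and the overall skeleton (trivial left inequality via the contractive inclusion $L_{q}(\mathbb{T}^{\infty})\hookrightarrow L_{p}(\mathbb{T}^{\infty})$, then reduction of the right inequality to the boundedness of the translation-by-$\tfrac{1}{2k}$ operator $\mathcal{H}^{p}\to\mathcal{H}^{q}$) is exactly the paper's. Where you diverge is in how you establish that boundedness: you take the qualitative inclusion of Remark~\ref{fok} as given and upgrade it to a norm bound by the closed graph theorem, using the continuity of the coefficient functionals $D\mapsto a_{n}$ on each $\mathcal{H}^{r}$ to see that the graph is closed. This is sound --- both spaces are Banach, the map is everywhere defined and acts diagonally on coefficients, so the graph argument goes through. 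The paper instead invokes the quantitative result of Defant--P\'erez--Sevilla-Peris (\cite[Proposition~2.4 and Theorem~2.5]{depese_2019}): the operator $T$ on $H_{p}(\mathbb{T}^{\infty})$ sending $z^{\alpha}$ to $(\mathfrak{p}^{-1/(2k)}z)^{\alpha}$ is bounded into $H_{q}(\mathbb{T}^{\infty})$ with the explicit norm bound $\prod_{j=1}^{j_{0}}\bigl(1-\mathfrak{p}_{j}^{-1/(2k)}\bigr)^{-1}$, where $j_{0}$ is chosen so that $\mathfrak{p}_{j}^{-1/(2k)}<\sqrt{p/q}$ for $j>j_{0}$. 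For the proposition as stated your softer route suffices, but the explicit product form of the constant is not a luxury in this paper: it is reused in Lemma~\ref{comparacion potencias} to track how $C$ grows when the target exponent $q=k$ tends to infinity (yielding the bound $\Vert D^{k}\Vert_{2,m}\leq C_{m}e^{b_{m}k^{2m+1}}\Vert D\Vert_{2,4m}^{k}$), and a closed-graph constant carries no such information. So your proof proves the proposition, but the paper's quantitative detour is what makes the later superposition-operator results possible.
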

\begin{proof}
The first inequality in \eqref{seminormas} is obvious, so it is only left to show that the second inequality holds.
The proof relies heavily on \cite[Section~3]{bayart2002hardy} and \cite[Proposition~2.4 and Theorem~2.5]{depese_2019} (see also \cite[Theorem~12.9 and Proposition~12.10]{defant2018Dirichlet}). First of all, fix $1\leq p \leq q <\infty$, $k \in \mathbb{N}$ and find $j_{0}=j_{0}(p,q,k)\in\mathbb{N}_{0}$ such that $\mathfrak{p}_{j}^{\frac{-1}{2k}}<\sqrt{\frac{p}{q}}$ for every $j>j_{0}$. Then, by \cite[Proposition~2.4]{depese_2019} there exists a unique operator
\[
	T \colon H_{p} (\mathbb{T}^{\infty} ) \rightarrow H_{q} (\mathbb{T}^{\infty})
\]
satisfying $T \big( \sum_{\alpha \in \Lambda} a_{\alpha} z^{\alpha} \big) = \sum_{\alpha \in \Lambda} a_{\alpha} \big(\mathfrak{p}^{-1/(2k)} z \big)^{\alpha}$ for every finite $\Lambda \subseteq \mathbb{N}_{0}^{(\mathbb{N})}$ and $\Vert T \Vert \leq \prod_{j=1}^{j_{0}} \frac{1}{1-\mathfrak{p}_{j}^{\frac{-1}{2k}}}$. Following \cite[Theorem~2.5]{depese_2019}, by doing $M \Big( \sum a_{n} n^{-s} \Big) = \sum a_{n} n^{- \frac{1}{2k}} n^{-s}$ we define an operator $M \colon \mathcal{H}^{p}\rightarrow\mathcal{H}^{q}$ satisfying $\Vert M \Vert = \Vert T \Vert$. Then 
\[
		\Vert D\Vert_{q,k}
		=\Big\Vert \sum \frac{a_{n}}{n^{\frac{1}{k}}}n^{-s} \Vert_{q}
		= \Vert M (D_{2k})\Vert_q
		\leq \Vert M\Vert \; \Vert D_{2k}\Vert_p
		\leq \Big( \prod_{j=1}^{j_{0}} \frac{1}{1-\mathfrak{p}_{j}^{\frac{-1}{2k}}} \Big) \;\Vert D\Vert_{p,2k} \,.
\]
This gives \eqref{seminormas} and completes the proof.
	\end{proof}

\begin{obs} \label{inclusiones1}
A straightforward computation shows that 
\[
	\mathcal{H}^{\infty} \subseteq \mathcal{H}^{p} \subseteq \mathcal{H}^{1} \subseteq
	\mathcal{H}_{+}
\]
for every $1  \leq p \leq \infty$ and 
\[
	\mathcal{H}^{\infty} \subseteq\mathcal{H}_{+}^{\infty} \subseteq \mathcal{H}_{+} \,.
\]
There is, however, no relationship between the $\mathcal{H}^{p}$ spaces and  $\mathcal{H}_{+}^{\infty}$. On the one hand, we take the series $D=\sum a_{n} n^{-s}$ defined by $a_{n}=1$ if $n=2^j$ for some $j\in\mathbb{N}$ and $a_{n}=0$ otherwise. Note that  $\sigma_{u}(D) \leq \sigma_{a}(D)=0$, so $\sum a_{n} n^{-s} \in\mathcal{H}^{\infty}_{+}$, but clearly $(a_{n})_{n} \not\in \ell_{2}$ and then $\sum a_{n} n^{-s}$ does not belong to $\mathcal{H}^{2}$ (nor to any $\mathcal{H}^{p}$ for $2 \leq p < \infty$). As a matter of fact, $(a_{n})_{n}$ does not belong to $\ell_{r}$ for any $1 \leq r < \infty$; then a straightforward application of the Haussdorff-Young inequalities (see e.g. \cite{carandomarcecasevilla2019}) shows that $\sum a_{n} n^{-s} \not\in \mathcal{H}^{p}$ for every $1<p \leq 2$. The same argument shows that the series  $\sum b_{n} n^ {-s}$ given by $b_{n} = j$ if $n=2^{j}$ for $j=0,1,2, \ldots$ and $0$ otherwise belongs to $\mathcal{H}_{+}^{\infty}$ but not to $\mathcal{H}^{1}$. Summing up,
\[
	\mathcal{H}^{\infty}_{+} \not\subseteq \mathcal{H}^{p}
\]
for every $1\leq{p}<\infty$.\\
On the other hand for a fixed $\frac{1}{2} > \varepsilon >0$ the series $ \sum \frac{1}{n^{\frac{1}{2}+\varepsilon}} n^{-s}$ has abscissa of convergence $\sigma_{c} > \frac{1}{2} - \varepsilon >0$ and then cannot belong to $\mathcal{H}_{+}^{\infty}$. However, it clearly belongs to $\mathcal{H}^{2}$ and, since it can be seen as the $\frac{\varepsilon}{2}$-translation of the series $\sum \frac{1}{n^{\frac{1}{2}+\frac{\varepsilon}{2}}} n^{-s}$ (that again belongs to $\mathcal{H}^{2}$), it belongs to $\mathcal{H}^{p}$ for every $1 \leq p < \infty$. Hence
\[
	\mathcal{H}^{p} \not\subseteq \mathcal{H}^{\infty}_{+}
\]
for every $1\leq{p}<\infty$.
\end{obs}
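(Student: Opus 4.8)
The statement bundles two routine inclusion chains with two substantive non-inclusions, so the plan is to dispatch the chains quickly and concentrate the effort on the counterexamples. For the chains I would argue entirely through abscissas. The nesting $\mathcal{H}^{\infty} \subseteq \mathcal{H}^{p} \subseteq \mathcal{H}^{1}$ is the standard comparison of Hardy norms, read off on $\mathbb{T}^{\infty}$ through the isometry $\mathcal{H}^{p} \cong H_{p}(\mathbb{T}^{\infty})$ and H\"older's inequality on the probability space $\mathbb{T}^{\infty}$. Then $\mathcal{H}^{1} \subseteq \mathcal{H}_{+}$ holds because $D \in \mathcal{H}^{1}$ means precisely $\sigma_{1}(D) \leq 0$, i.e. $D \in \mathcal{H}^{1}_{+} = \mathcal{H}_{+}$. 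For the second chain, $\mathcal{H}^{\infty} \subseteq \mathcal{H}^{\infty}_{+}$ is clear, and $\mathcal{H}^{\infty}_{+} \subseteq \mathcal{H}_{+}$ follows by noting that $\sigma_{u}(D) \leq 0$ forces $\sigma_{u}(D_{\sigma}) \leq -1/\sigma < 0$ for every $\sigma > 0$; each such $D_{\sigma}$ is then a uniform limit of Dirichlet polynomials on $\mathbb{C}_{+}$, hence bounded there, so $D_{\sigma} \in \mathcal{H}^{\infty} \subseteq \mathcal{H}^{p}$ and $\sigma_{p}(D) \leq 0$.

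For $\mathcal{H}^{\infty}_{+} \not\subseteq \mathcal{H}^{p}$ I would use a series lacunary on the powers of $2$: set $a_{n} = 1$ when $n = 2^{j}$ and $a_{n} = 0$ otherwise. The absolute series $\sum_{j} 2^{-j\sigma}$ converges exactly for $\sigma > 0$, so $\sigma_{a}(D) = 0$ and therefore $\sigma_{u}(D) \leq 0$, placing $D$ in $\mathcal{H}^{\infty}_{+}$. To exclude $D$ from $\mathcal{H}^{p}$ for $1 < p < \infty$ I would invoke Hausdorff--Young: a member of $\mathcal{H}^{p}$ with $1 < p \leq 2$ has coefficients in $\ell_{p'}$, whereas the $0/1$-sequence $(a_{n})_{n}$ lies in no $\ell_{r}$ with $r < \infty$; for $p \geq 2$ the inclusion $\mathcal{H}^{p} \subseteq \mathcal{H}^{2}$ together with $(a_{n})_{n} \notin \ell_{2}$ does the same.

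The delicate case, and the step I expect to be the main obstacle, is the endpoint $p = 1$: there Hausdorff--Young only forces bounded coefficients ($p' = \infty$), and the plain indicator passes that test. To handle it I would pass to the growing variant $b_{n} = j$ for $n = 2^{j}$ (and $0$ otherwise). The polynomial factor $j$ leaves the abscissa computation unchanged, so this series still lies in $\mathcal{H}^{\infty}_{+}$, but its coefficients are unbounded and hence violate the $\ell_{\infty}$ conclusion of Hausdorff--Young at $p = 1$, so $D \notin \mathcal{H}^{1}$.

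Finally, for $\mathcal{H}^{p} \not\subseteq \mathcal{H}^{\infty}_{+}$ I would fix $0 < \varepsilon < \tfrac{1}{2}$ and take the shifted zeta-type series $D = \sum n^{-(1/2 + \varepsilon)} n^{-s}$. Its abscissa of convergence is $\tfrac{1}{2} - \varepsilon > 0$, so $\sigma_{u}(D) \geq \sigma_{c}(D) > 0$ and $D \notin \mathcal{H}^{\infty}_{+}$. On the other hand $\sum n^{-(1 + 2\varepsilon)} < \infty$ shows $(n^{-(1/2+\varepsilon)})_{n} \in \ell_{2}$, so $D \in \mathcal{H}^{2}$; viewing $D$ as the $\tfrac{\varepsilon}{2}$-translate of the $\mathcal{H}^{2}$-series $\sum n^{-(1/2 + \varepsilon/2)} n^{-s}$ and applying Remark~\ref{fok} upgrades this to $D \in \mathcal{H}^{q}$ for every $q < \infty$, hence $D \in \mathcal{H}^{p}$ for all $1 \leq p < \infty$. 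Throughout, the only point needing care is to check that for these explicit series the general inequalities $\sigma_{c} \leq \sigma_{u} \leq \sigma_{a}$ pin $\sigma_{u}$ exactly where the argument requires.
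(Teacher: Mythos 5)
Your proposal is correct and follows essentially the same route as the paper: the same lacunary series supported on powers of $2$ (with the growing variant $b_{n}=j$ to handle the endpoint $p=1$ via the $\ell_{\infty}$ case of Hausdorff--Young), the same shifted zeta-type series for the reverse non-inclusion, and the same abscissa/translation arguments for the inclusion chains. No gaps.
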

	
\begin{obs}
Let us also observe that if $D \in \mathcal{H}_{+}$, then $D_{1/\varepsilon} \in \mathcal{H}^{2}$ for every $\varepsilon>0$ and, by \cite[Remark~1.8 and Theorem~12.11]{defant2018Dirichlet} $\sigma_{a}(D) = \sigma_{a}(D_{1/\varepsilon}) + \varepsilon \leq \frac{1}{2} + \varepsilon$. This gives $\sigma_{a}(D) \leq \frac{1}{2}$. 
Now, the series $\sum \frac{1}{\sqrt{n}} n^{-s}$ is in $\mathcal{H}_{+}$ and satisfies $\sigma_{a}(D) = \frac{1}{2}$. Then
\begin{equation} \label{sigma-aes}
	\sup_{D \in \mathcal{H}_{+}} \sigma_{a}(D) = \frac{1}{2} \,.
\end{equation}
\end{obs}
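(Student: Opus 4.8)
The plan is to prove \eqref{sigma-aes} by establishing the two inequalities $\sup_{D\in\mathcal{H}_{+}}\sigma_{a}(D)\le\tfrac12$ and $\sup_{D\in\mathcal{H}_{+}}\sigma_{a}(D)\ge\tfrac12$; in fact I will show that the supremum is attained, so that the displayed equality follows at once. For the upper bound I would start from the defining property of $\mathcal{H}_{+}$: since membership forces $\sigma_{2}(D)\le 0$, for every $\varepsilon>0$ the translate $D_{1/\varepsilon}=\sum \frac{a_{n}}{n^{\varepsilon}}n^{-s}$ lies in $\mathcal{H}^{2}$, and hence its coefficient sequence $(a_{n}n^{-\varepsilon})_{n}$ belongs to $\ell_{2}$. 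The next step is to recall the elementary fact that any Dirichlet series with $\ell_{2}$ coefficients converges absolutely on $\mathbb{C}_{1/2}$: by Cauchy--Schwarz, $\sum |a_{n}|n^{-\varepsilon}n^{-\sigma}\le \Vert (a_{n}n^{-\varepsilon})_{n}\Vert_{\ell_{2}}\,\big(\sum n^{-2\sigma}\big)^{1/2}$, which is finite whenever $\sigma>\tfrac12$; thus $\sigma_{a}(D_{1/\varepsilon})\le\tfrac12$. Finally I would invoke the translation identity for abscissas of absolute convergence, $\sigma_{a}(D)=\sigma_{a}(D_{1/\varepsilon})+\varepsilon$ (this is where \cite[Remark~1.8 and Theorem~12.11]{defant2018Dirichlet} enter), to get $\sigma_{a}(D)\le\tfrac12+\varepsilon$; letting $\varepsilon\to 0$ yields $\sigma_{a}(D)\le\tfrac12$ for every $D\in\mathcal{H}_{+}$.

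For the attainment I would exhibit the concrete series $D=\sum n^{-1/2}n^{-s}$. First I would check that $D\in\mathcal{H}_{+}$: each translate $D_{1/\varepsilon}=\sum n^{-(1/2+\varepsilon)}n^{-s}$ has coefficients in $\ell_{2}$, since $\sum n^{-(1+2\varepsilon)}<\infty$, so $D_{1/\varepsilon}\in\mathcal{H}^{2}$ for all $\varepsilon>0$ and therefore $\sigma_{2}(D)\le 0$. Then I would compute $\sigma_{a}(D)$ directly: the series $\sum n^{-1/2}n^{-\sigma}=\sum n^{-(\sigma+1/2)}$ converges precisely for $\sigma>\tfrac12$ and diverges otherwise, whence $\sigma_{a}(D)=\tfrac12$. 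This produces an element of $\mathcal{H}_{+}$ realizing the value $\tfrac12$, which combined with the upper bound gives \eqref{sigma-aes} and shows the supremum is in fact a maximum.

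The only genuinely nontrivial ingredient is the half-plane bound $\sigma_{a}\le\tfrac12$ for series in $\mathcal{H}^{2}$, and even this reduces to the $\ell_{2}$ description of $\mathcal{H}^{2}$ together with Cauchy--Schwarz (equivalently, to the cited results in \cite{defant2018Dirichlet}); everything else is bookkeeping with the translation formulas $D_{\sigma}(s)=D(s+\tfrac1\sigma)$ and $\sigma_{a}(D_{\sigma})=\sigma_{a}(D)-\tfrac1\sigma$. I therefore expect no substantial obstacle: the main point is simply to correctly interlock the membership condition defining $\mathcal{H}_{+}$ with the behaviour of the abscissa of absolute convergence under translation, and to verify that the extremal series $\sum n^{-1/2}n^{-s}$ genuinely sits in $\mathcal{H}_{+}$ rather than merely in some $\mathcal{H}^{p}$.
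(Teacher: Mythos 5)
Your argument is correct and follows the same route as the paper: translate into $\mathcal{H}^{2}$, use the half-plane bound $\sigma_{a}\le\tfrac12$ there together with the translation formula for $\sigma_{a}$, let $\varepsilon\to 0$, and exhibit $\sum n^{-1/2}n^{-s}$ as the extremal example. The only difference is that you unpack the Cauchy--Schwarz argument behind the cited bound, which the paper simply references.
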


Our aim now is to prove the following result, parallel to \cite[Theorem~2.2]{bonet2018frechet}.
	
\begin{teo}\label{teo: caract de esp}
The space $\mathcal{H}_{+}$ is a Fr\'echet-Schwartz,  non-nuclear algebra and the Dirichlet monomials $e_n(s)=n^{-s}$ form an unconditional, non-absolute Schauder basis.
\end{teo}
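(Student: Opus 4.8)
The plan is to recognise $\mathcal{H}_+$ as a K\"othe echelon space and then read off its linear-topological invariants from the Grothendieck--Pietsch criteria. Since $\mathcal{H}^2$ consists exactly of the Dirichlet series with square-summable coefficients, the defining seminorms are the weighted $\ell_2$-norms
\[
\Big\Vert \sum a_n n^{-s} \Big\Vert_{2,k}^2 = \sum_{n} \frac{|a_n|^2}{n^{2/k}} \,,
\]
so (using that $\sum_n |a_n|^2 n^{-2/\sigma}$ is monotone in $\sigma$, whence ``for every real $\sigma>0$'' reduces to ``for every integer $k$'') the map $\sum a_n n^{-s} \mapsto (a_n)_n$ is a topological isomorphism of $\mathcal{H}_+$ onto the echelon space $\lambda_2(A)$ of order $2$ associated with the K\"othe matrix $A=\big(a_k(n)\big)_{k,n}$, $a_k(n)=n^{-1/k}$ (note $a_k(n)\le a_{k+1}(n)$, consistent with Proposition~\ref{obs: igualdal de espacios}). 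Under this identification the Dirichlet monomials $e_n$ become the canonical unit vectors, and the whole statement, except for the algebra property, reduces to computing with the quotients $a_k(n)/a_m(n)=n^{1/m-1/k}$.

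For the Schwartz and nuclearity assertions I would invoke the Grothendieck--Pietsch criteria for echelon spaces: $\lambda_2(A)$ is Schwartz iff for every $k$ there is $m>k$ with $\big(a_k(n)/a_m(n)\big)_n\in c_0$, and nuclear iff this quotient sequence can be taken in $\ell_1$. For $m>k$ the exponent $1/m-1/k$ is negative, so $a_k(n)/a_m(n)=n^{1/m-1/k}\to 0$ and the Schwartz condition holds (taking, say, $m=k+1$). On the other hand $\sum_n n^{1/m-1/k}<\infty$ would force $1/k-1/m>1$, which is impossible since $1/k\le 1$ and $1/m>0$; hence no admissible quotient lies in $\ell_1$ and $\mathcal{H}_+$ is non-nuclear.

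For the basis, the solidity of the seminorms does the work. Because $\big\Vert D-\sum_{n\in F}a_n e_n\big\Vert_{2,k}^2=\sum_{n\notin F}|a_n|^2 n^{-2/k}$ for every finite set $F$, and the full series converges, the net of finite partial sums converges to $D$ in every seminorm; this simultaneously shows that $(e_n)_n$ is a Schauder basis (with continuous coordinate functionals $D\mapsto a_n$) and that it is unconditional. To see it is not absolute it suffices to produce a single element and seminorm for which absolute summation fails: for $D=\sum_{n\ge 2}\frac{1}{\sqrt{n}\,\log n}\,n^{-s}$ one checks $\Vert D\Vert_{2,k}^2=\sum_{n\ge 2} n^{-1-2/k}(\log n)^{-2}<\infty$ for all $k$, so $D\in\mathcal{H}_+$, while $\sum_n |a_n|\,\Vert e_n\Vert_{2,2}=\sum_{n\ge 2}\frac{1}{n\log n}=\infty$ since $\Vert e_n\Vert_{2,2}=n^{-1/2}$.

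Finally, the algebra structure is the one point that steps outside the echelon-space formalism, and is where I expect the only real subtlety. Translation is multiplicative, $(DE)_\sigma=D_\sigma E_\sigma$, so for $D,E\in\mathcal{H}_+$ and any $k$ the Cauchy--Schwarz (H\"older) inequality in $H_p(\mathbb{T}^\infty)$ gives $\Vert (DE)_k\Vert_{\mathcal{H}^1}=\Vert D_k E_k\Vert_{\mathcal{H}^1}\le \Vert D_k\Vert_{\mathcal{H}^2}\Vert E_k\Vert_{\mathcal{H}^2}$, that is, $(DE)_k\in\mathcal{H}^1$ for every $k$. Crucially, since all the spaces $\mathcal{H}^p_+$ coincide, this already shows $DE\in\mathcal{H}^1_+=\mathcal{H}_+$; landing in $\mathcal{H}^1$ rather than $\mathcal{H}^2$ is exactly what makes the argument go through, a product of two $\mathcal{H}^2$ series need not lie in $\mathcal{H}^2$. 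The resulting estimate $\Vert DE\Vert_{1,k}\le \Vert D\Vert_{2,k}\Vert E\Vert_{2,k}$, combined with the comparison $\Vert\cdot\Vert_{2,k}\le C_{k}\Vert\cdot\Vert_{1,2k}$ from Proposition~\ref{obs: igualdal de espacios}, yields $\Vert DE\Vert_{1,k}\le C_k^2\,\Vert D\Vert_{1,2k}\Vert E\Vert_{1,2k}$, giving joint continuity of the multiplication and hence the Fr\'echet-algebra structure.
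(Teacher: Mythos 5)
Your proposal is correct, and its overall architecture coincides with the paper's: identification of $\mathcal{H}_{+}$ with the K\"othe echelon space $\lambda_{2}(B)$, $b_{k}(n)=n^{-1/k}$, the Grothendieck--Pietsch criterion for (non-)nuclearity, the solidity of the weighted $\ell_{2}$-seminorms for the unconditional basis, and a $2\times 2\to 1$ Cauchy--Schwarz trick for the algebra structure. Two of your sub-arguments genuinely diverge from the paper's, both in a direction I would call more elementary. For non-absoluteness the paper argues abstractly: if the basis were absolute then by \cite[Lemma~27.25]{meise1997introduction} $\mathcal{H}_{+}$ would be isomorphic to $\lambda_{1}(B)$, which \cite[Proposition~28.16]{meise1997introduction} rules out; you instead exhibit the single element $\sum_{n\ge 2}\frac{1}{\sqrt{n}\log n}n^{-s}$, which lies in $\mathcal{H}_{+}$ while $\sum_{n}\vert a_{n}\vert\,\Vert e_{n}\Vert_{2,2}=\sum\frac{1}{n\log n}$ diverges --- a concrete witness that no continuous seminorm can dominate the absolute sums. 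For the Schwartz property the paper factors the linking maps through an explicit compact diagonal operator on $\mathcal{H}^{2}$, while you quote the $c_{0}$-quotient criterion for echelon spaces; these are essentially the same computation packaged differently. For the algebra property the paper multiplies Dirichlet \emph{polynomials}, estimates via the Besicovitch means, and then runs a Cauchy-sequence/density argument, whereas you apply H\"older directly to the Bohr lifts in $H_{2}(\mathbb{T}^{\infty})\cdot H_{2}(\mathbb{T}^{\infty})\subseteq H_{1}(\mathbb{T}^{\infty})$ and then use that $\mathcal{H}^{1}_{+}=\mathcal{H}^{2}_{+}$ to upgrade membership in every $\mathcal{H}^{1}_{k}$ to membership in $\mathcal{H}_{+}$; this is shorter, but you should make explicit the one step you are implicitly using, namely that for $f,g\in H_{2}(\mathbb{T}^{\infty})$ the Fourier coefficients of the product $fg\in H_{1}(\mathbb{T}^{\infty})$ are the convolutions of those of $f$ and $g$, so that the element of $\mathcal{H}^{1}$ you produce really is the formal Dirichlet product $DE$ (and agrees with the pointwise product on $\mathbb{C}_{1/2}$, where both series converge absolutely by \eqref{sigma-aes}). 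The paper's polynomial-approximation route exists precisely to sidestep this identification; once you state it, your version is a clean alternative.
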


Before we proceed, let us note that for each fixed $1 \leq p < \infty$ and $k \in \mathbb{N}$ we may consider the following space of Dirichlet series
\[
\mathcal{H}_{k}^{p}:=\left\{\sum{a_{n}n^{-s}}:\sum{\frac{a_n}{n^{\frac{1}{k}}}n^{-s}}\in\mathcal{H}^{p}\right\} \,,
\]
that, with the norm $\Vert  \cdot \Vert_{p,k}$, is a Banach space. Note that  $\mathcal{H}^{p}_{k+1}\subseteq\mathcal{H}^{p}_{k}$, and the inclusion is continuous. By Remark~\ref{fok} (and the comment after it), $\mathcal{H}_{+}$ is the projective limit of the spaces  $\mathcal{H}_{k}^{p}$. Note that for every $p$ we get the same projective limit, so we have
\[
\mathcal{H}_{+}:=\bigcap\limits_{k=1}^{\infty}{\mathcal{H}_{k}^{2}} \,,
\]
endowed with the projective limit topology.\\

The space $\mathcal{H}_{+}$ is Schwartz if the inclusions
\begin{equation} \label{inclusiones}
\id_{k} \colon \mathcal{H}^{2}_{k+1}\longrightarrow\mathcal{H}^{2}_{k}
\end{equation}
(that, as we already mentioned,  are continuous)
are all compact. This in the case of $\mathcal{H}_{+}^{\infty}$ is done in \cite{bonet2018frechet} by using a variant of Montel's theorem for Dirichlet series due to Bayart \cite[Lemma~18]{bayart2002hardy}. In our case, due to the particular structure of the spaces, is particularly easy. Let us note that the mappings $\mathcal{H}^{2}_{k+1}\rightarrow\mathcal{H}^{2}$ given by $\sum a_{n} n^{-s} \mapsto \sum \frac{a_{n}}{n^{1/(k+1)}} n^{-s}$ and $\mathcal{H}^{2}\rightarrow\mathcal{H}^{2}_{k}$ given by $\sum a_{n} n^{-s} \mapsto \sum a_{n}n^{1/k} n^{-s}$ are continuous. Also, the operator $\mathcal{H}^{2} \to \mathcal{H}^{2}$ defined as $\sum a_{n} n^{-s} \mapsto \sum a_{n} \frac{n^{1/(k+1)}}{n^{1/k}} n^{-s}$ is compact (because it is a diagonal operator between Hilbert spaces with a defining sequence tending to $0$). The inclusion $\mathcal{H}^{2}_{k+1}\hookrightarrow\mathcal{H}^{2}_{k}$ is the composition of these three mappings and is, therefore compact. \\
	
This already gives the first statement in Theorem~\ref{teo: caract de esp}, namely that $\mathcal{H}_{+}$ is a Fr\'echet-Schwartz  space (hence Montel and reflexive, see \cite[Remark~24.24]{meise1997introduction}).  The rest of the statements are scattered along the paper: the fact that it is an algebra is proved in Proposition~\ref{Fr alg}, the monomials are shown to form an unconditional Schauder basis in Lemma~\ref{schauder}, and that it is not absolute follows from the identification with certain K\"othe echelon space (see Remark~\ref{koethe}). The non-nuclearity is given in Lemma~\ref{nuclear}.\\

Let us recall that a sequence $\{e_{n}\}_{n}$ in a locally convex space $E$ is a Schauder basis if for every $x \in E$ there is a unique sequence $(x_{n})_{n}$ of scalars	so that $x = \sum_{n=1}^{\infty} x_{n} e_{n}$. 	We now move to the proof of the fact that the monomials $\{n^{-s}\}$ form a Schauder basis of $\mathcal{H}_{+}$. This is already known for the Hardy spaces $\mathcal{H}^{p}$ for $1<p<\infty$ \cite{alemanolsensaksman2014} and for $\mathcal{H}_{+}^{\infty}$ \cite{bonet2018frechet}. In this case we even have that the basis is uncoditional (that is, the series $\sum_{n=1}^{\infty} x_{\pi(n)} e_{\pi(n)}$ converges for every permutation $\pi$ of the natural numbers.

\begin{lem}\label{schauder}
The Dirichlet monomials $\{n^{-s}\}$ form an unconditional Schauder basis of $\mathcal{H}_{+}$.
\end{lem}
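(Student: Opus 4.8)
The plan is to exploit the projective-limit structure $\mathcal{H}_{+} = \bigcap_k \mathcal{H}^2_k$ together with the fact, already known from \cite{alemanolsensaksman2014}, that the Dirichlet monomials form a Schauder basis of each $\mathcal{H}^2$ (equivalently, by the isometry $\mathcal{H}^2 \cong H_2(\mathbb{T}^\infty)$, an orthonormal basis). Since $\mathcal{H}^2$ is a Hilbert space, the monomials are in fact an \emph{unconditional} orthonormal basis there, so the real content of the lemma is to transfer unconditional convergence from each Banach step $\mathcal{H}^2_k$ up to the Fréchet space $\mathcal{H}_{+}$, and to check that the expansion coefficients are the same at every level $k$ (so that a single series represents $D$ simultaneously in all seminorms).

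First I would fix $D = \sum a_n n^{-s} \in \mathcal{H}_{+}$ and observe that for each $k$ the translated series $D_{k} = \sum a_n n^{-1/k} n^{-s}$ lies in $\mathcal{H}^2$; since the monomials $\{n^{-s}\}$ are an orthonormal basis of $\mathcal{H}^2$, the partial sums $\sum_{n \le N} a_n n^{-1/k} n^{-s}$ converge to $D_k$ in $\mathcal{H}^2$, which is exactly the statement that $\sum_{n \le N} a_n n^{-s} \to D$ in the seminorm $\Vert \cdot \Vert_{2,k}$. Crucially the coefficients $a_n$ do not depend on $k$, so the single sequence $(a_n)_n$ furnishes the expansion at every level. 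Convergence in the Fréchet space $\mathcal{H}_{+}$ is precisely convergence in each seminorm $\Vert \cdot \Vert_{2,k}$ simultaneously, so this already yields $D = \sum_n a_n e_n$ in $\mathcal{H}_{+}$. Uniqueness of the coefficients is immediate from uniqueness in any single $\mathcal{H}^2_k$ (e.g.\ $a_n$ is recovered as the $n^{-s}$-Fourier/Bohr coefficient).

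For \textbf{unconditionality}, I would use the characterization that a basis is unconditional precisely when convergence is preserved under every permutation, and reduce this to an unconditionality estimate at each Banach level. For a fixed $k$, the orthonormality of $\{n^{-1/k} n^{-s}\}$ in $\mathcal{H}^2$ gives the sharp bound $\big\Vert \sum_{n \in F} \pm a_n e_n \big\Vert_{2,k}^2 = \sum_{n \in F} |a_n|^2 n^{-2/k} \le \Vert D \Vert_{2,k}^2$ for every finite $F$ and every choice of signs, so the unconditional basis constant at level $k$ is $1$. Because this holds uniformly at every $k$ with the \emph{same} coefficients, any permutation $\pi$ gives partial sums that are Cauchy in each seminorm, hence the rearranged series $\sum_n a_{\pi(n)} e_{\pi(n)}$ converges in $\mathcal{H}_{+}$; this is the definition of an unconditional Schauder basis of the Fréchet space. (Alternatively, in the language of locally convex spaces, I would verify that the family of seminorms is invariant under the sign/permutation operators $D \mapsto \sum \varepsilon_n a_n e_n$, which is exactly the Hilbert-space estimate above at each level.)

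The \emph{main obstacle} is essentially bookkeeping rather than a genuine difficulty: one must be careful that the notion of Schauder basis in a Fréchet space requires a \emph{single} coefficient sequence that works for the whole projective-limit topology, and verify that the topology of $\mathcal{H}_{+}$ really is the simultaneous convergence in all the seminorms $\Vert\cdot\Vert_{2,k}$ (which is the projective-limit topology established just before the lemma). Once that identification is in place, the Hilbert-space structure of each step makes every convergence and unconditionality estimate trivial, with basis constant $1$ at every level. No delicate interpolation or the inequalities from Proposition~\ref{obs: igualdal de espacios} are needed here, since working entirely with $p=2$ suffices to define the topology.
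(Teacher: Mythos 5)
Your proof is correct and follows essentially the same route as the paper: both arguments reduce everything to the identity $\Vert \sum_{n \in F} a_n e_n \Vert_{2,k}^2 = \sum_{n \in F} |a_n|^2 n^{-2/k}$ and the fact that the absolutely summable sequence $(|a_n|^2 n^{-2/k})_n$ converges unconditionally, which gives convergence of every rearrangement in each seminorm and hence in $\mathcal{H}_{+}$. The paper's version is just a more compact statement of the same tail estimate, so no further comparison is needed.
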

\begin{proof}
Take $\sum a_{n}n^{-s} \in \mathcal{H}_{+} $ and fix $k \in \mathbb{N}$. Given $N \in \mathbb{N}$ and a permutation $\pi$, let us denote $F=\{\pi (1) , \ldots , \pi(N) \}$. Then
\[
		\bigg\Vert \sum a_{n} n^{-s} - \sum_{n=1}^{N} a_{\pi(n)} \pi(n)^{-s} \bigg\Vert_{2,k}
		= \bigg\Vert \sum_{n \in \mathbb{N} \setminus F} \frac{a_n}{n^{\frac{1}{k}}} n^{-s} \bigg\Vert_{\mathcal{H}^{2}}
		= \bigg(   \sum_{n \in \mathbb{N} \setminus F} \frac{\vert a_n \vert^{2}}{n^{2/k}} \bigg)^{\frac{1}{2}} \,.
\]
But the sequence $\big(\frac{\vert a_{n} \vert^{2}}{n^{2/k}}\big)_{n}$ is absolutely summable (since  $\sum \frac{a_{n}}{n^{1/k}}n^{-s} \in \mathcal{H}^{2} $), hence it converges unconditionally. This gives the conclusion.
\end{proof}
	
Let us recall that a locally convex space $E$ is nuclear if for every seminorm $p$ there exist a seminorm $q$ such that the identity operator $I:(E,q)\rightarrow(E,p)$ is nuclear. Every nuclear space is Schwartz and has a fundamental system of Hilbert seminorms (see \cite[Corollary~28.5 and Lemma~28.1]{meise1997introduction}). The space $\mathcal{H}_{+}$ shares these two properties (note that the family $\big(\Vert \cdot \Vert_{2,k}\big)_{k}$ is a  fundamental system of Hilbert seminorms). It comes then naturally to ask whether or not it is nuclear. 

\begin{lem} \label{nuclear}
The space $\mathcal{H}_{+}$ is not nuclear.
\end{lem}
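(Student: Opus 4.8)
The plan is to read off non-nuclearity directly from the definition of nuclearity recalled just above, exploiting that $\mathcal{H}_{+}$ is, isometrically, a very concrete projective limit of Hilbert spaces. Since $(\Vert\cdot\Vert_{2,k})_{k}$ is an increasing fundamental system of Hilbert seminorms, it suffices, in order to \emph{refute} nuclearity, to exhibit one seminorm $\Vert\cdot\Vert_{2,k}$ for which the canonical identity $I\colon(\mathcal{H}_{+},\Vert\cdot\Vert_{2,m})\to(\mathcal{H}_{+},\Vert\cdot\Vert_{2,k})$ fails to be nuclear for \emph{every} $m\ge k$. Indeed, any continuous seminorm $q$ is dominated by some $\Vert\cdot\Vert_{2,m}$, and were $I\colon(\mathcal{H}_{+},q)\to(\mathcal{H}_{+},\Vert\cdot\Vert_{2,k})$ nuclear, then composing with the continuous identity $(\mathcal{H}_{+},\Vert\cdot\Vert_{2,m})\to(\mathcal{H}_{+},q)$ and using the ideal property of nuclear operators would force the linking map for the pair $(m,k)$ to be nuclear as well.

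First I would make the local Banach spaces explicit. Since $\Vert\sum a_{n} n^{-s}\Vert_{2,k}^{2}=\sum_{n}\vert a_{n}\vert^{2} n^{-2/k}$, the map $U_{k}\colon\sum a_{n} n^{-s}\mapsto (a_{n} n^{-1/k})_{n}$ is an isometry of $(\mathcal{H}_{+},\Vert\cdot\Vert_{2,k})$ into $\ell_{2}$ with dense range (Dirichlet polynomials are sent onto the finitely supported sequences), so the completion of $(\mathcal{H}_{+},\Vert\cdot\Vert_{2,k})$ is $\ell_{2}$. Reading the linking map between the completions for a pair $(m,k)$ with $m\ge k$ in these coordinates — that is, conjugating by $U_{k}$ and $U_{m}^{-1}$ — turns it into the diagonal operator $\Delta_{m,k}\colon\ell_{2}\to\ell_{2}$, $(y_{n})_{n}\mapsto (n^{1/m-1/k}y_{n})_{n}$. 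This is exactly the diagonal operator already met in the proof of the Schwartz property (there with $m=k+1$), now transported to $\ell_{2}$.

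Finally I would use that a diagonal operator on $\ell_{2}$ with diagonal $(d_{n})_{n}$ is nuclear precisely when $(d_{n})_{n}\in\ell_{1}$, its singular numbers being the $\vert d_{n}\vert$ and the nuclear class on a Hilbert space being the trace class. Thus $\Delta_{m,k}$ is nuclear if and only if $\sum_{n} n^{1/m-1/k}<\infty$, i.e. $1/k-1/m>1$. But $1/k-1/m<1/k\le 1$ for all $1\le k\le m$, so this never holds and the series always diverges. Hence no linking map $\Delta_{m,k}$ is nuclear, and by the reduction above $\mathcal{H}_{+}$ is not nuclear. This is of course the Grothendieck–Pietsch criterion applied to the K\"othe echelon space $\lambda^{2}\big((n^{-1/k})_{n}\big)$ to which $\mathcal{H}_{+}$ is isomorphic; the computation also makes transparent the contrast with the Schwartz property, where only $n^{1/m-1/k}\to 0$ — a $c_{0}$-type condition, much weaker than $\ell_{1}$-summability — was required.

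The argument is short and the estimates are elementary; the only point deserving care is the bookkeeping identifying the abstract linking maps between local Banach spaces with the explicit $\ell_{2}$-diagonal operators $\Delta_{m,k}$, together with the observation that it is enough to test nuclearity on the fundamental system $(\Vert\cdot\Vert_{2,k})_{k}$ rather than on arbitrary continuous seminorms. I expect this reduction — rather than any genuine analytic difficulty — to be the main thing to set up correctly.
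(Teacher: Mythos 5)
Your proof is correct, and the core computation is the same one the paper relies on: the divergence of $\sum_{n} n^{1/m-1/k}$ for all $m\ge k$, because $1/k-1/m<1$. The difference is in packaging. The paper's proof is a one-line citation: since the monomials $\{n^{-s}\}$ form a Schauder basis (Lemma~\ref{schauder}) with $\Vert n^{-s}\Vert_{2,m}=n^{-1/m}$, the Grothendieck--Pietsch criterion \cite[Theorem~28.15]{meise1997introduction} reduces nuclearity to exactly the summability condition $\sum_n \Vert n^{-s}\Vert_{2,k}/\Vert n^{-s}\Vert_{2,m}<\infty$ for some $m$, which fails. You instead prove the relevant instance of that criterion from scratch: you identify each local Banach space $(\mathcal{H}_{+},\Vert\cdot\Vert_{2,k})^{\widehat{}}$ with $\ell_{2}$, read the linking maps as diagonal operators, and use that nuclear $=$ trace class on Hilbert space, so a diagonal operator is nuclear iff its diagonal is in $\ell_{1}$. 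Your reduction from arbitrary continuous seminorms to the fundamental system via the ideal property of nuclear operators is also correctly handled. What your route buys is self-containedness --- in particular it does not need the Schauder basis lemma at all, only the explicit Hilbertian structure of the seminorms --- at the cost of a little bookkeeping with the completions; what the paper's route buys is brevity, since Grothendieck--Pietsch absorbs all of that bookkeeping once the basis is in hand.
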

\begin{proof}
Once we have that the monomials form a Schauder basis, this follows from a straighforward application of the Grothendieck-Pietsch criterion (see e.g. \cite[Theorem~28.15]{meise1997introduction}).
\end{proof}
	
\begin{obs} \label{koethe}
A K\"othe matrix is a sequence $B=(b_{k})_{k\in\mathbb{N}}$, of functions $b_{k} : I \to \mathbb{R}$ (where $I$ is a countable set of indices) satisfying that $0\leq{b_{k}(i)}\leq{b_{k+1}(i)}$, for all $k\in\mathbb{N}$ and all 
$i\in{I}$ and that  for each $i\in{I}$ there exist $k\in\mathbb{N}$ such that $b_{k}(i)>0$. Given a K\"othe matrix and $1 \leq p < \infty$, the corresponding K\"othe echelon space is defined as
\[
	\lambda_{p}(B):=\bigg\{ x\in\mathbb{C}^{I} \colon q_{k}^{(p)}(x):=\bigg(\sum_{i\in{I}} |b_{k}(i)x_i|^p \bigg)^{\frac{1}{p}} < \infty, \text{ for all } k \in\mathbb{N}\bigg\} \,.
\]
These are all Fr\'echet spaces endowed with the topology given by the increasing sequence of seminorms $q_{1}^{(p)}\leq{q_{2}^{(p)}}\leq\cdots\leq{q_{k}^{(p)}}\leq\cdots$. Observe that taking $I=\mathbb{N}$ and defining the matrix $B$ as
\begin{equation} \label{bes}
b_{k} (n) = \frac{1}{n^{\frac{1}{k}}} 
\end{equation}
for $k,n \in \mathbb{N}$ a straightforward computation shows that 
\begin{equation} \label{rial}
\mathcal{H}_{+} = \lambda_{2}(B)
\end{equation}
as Fr\'echet spaces. \\ 
Let us recall that a Shauder basis $\{e_{n}\}_{n}$ of a locally convex space $E$ is absolute if for every continuous seminorm $p$ on $E$ there is a continuous seminorm $q$ on $E$ and $C>0$ so that 
\[
\sum_{n} \vert x_{n} \vert p(e_{n}) \leq C q(x)
\]
for every $x \in E$. Let us note that $\Vert n^{-s} \Vert_{2,k} = \frac{1}{n^{1/k}}$ for every $n,k$. If the monomials were an absolute basis of $\mathcal{H}_{+}$ then, by \cite[Lemma~27.25]{meise1997introduction}, $\mathcal{H}_{+}$ would be isomorphic to $\lambda_{1}(B)$. But \cite[Proposition~28.16]{meise1997introduction} shows that this is not possible. Hence the monomials cannot build an absolute Schauder basis.
\end{obs}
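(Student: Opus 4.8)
The plan is to establish the two facts asserted in the remark: the Fr\'echet-space identification $\mathcal{H}_{+} = \lambda_2(B)$ of \eqref{rial}, and, building on it, the non-absoluteness of the monomial basis.

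For \eqref{rial} I would work with the coordinate map $\Phi \colon \sum a_n n^{-s} \mapsto (a_n)_n$. The one thing to exploit is that $\mathcal{H}^2$ is isometric to $\ell_2$ through its coefficients, so that for every $k \in \mathbb{N}$
\[
\Big\Vert \sum a_n n^{-s} \Big\Vert_{2,k} = \Big\Vert \sum \tfrac{a_n}{n^{1/k}} n^{-s} \Big\Vert_{\mathcal{H}^2} = \Big( \sum_n \tfrac{\vert a_n\vert^2}{n^{2/k}} \Big)^{1/2} = q_k^{(2)}\big((a_n)_n\big),
\]
with $q_k^{(2)}$ the $k$-th seminorm of $\lambda_2(B)$ for the matrix \eqref{bes}. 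Hence $\Phi$ matches $\Vert\cdot\Vert_{2,k}$ with $q_k^{(2)}$ for every $k$. It then only remains to see that $\Phi$ is a linear bijection onto $\lambda_2(B)$: linearity and injectivity are clear, and a sequence $(a_n) \in \lambda_2(B)$ has $q_k^{(2)}\big((a_n)_n\big) < \infty$ for all $k$, i.e. $\sum \frac{a_n}{n^{1/k}} n^{-s} \in \mathcal{H}^2$ for every $k$, so by the projective-limit description $\mathcal{H}_{+} = \bigcap_k \mathcal{H}_k^2$ (together with Remark~\ref{fok}) the series $\sum a_n n^{-s}$ lies in $\mathcal{H}_{+}$ and is a preimage. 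Matching seminorm systems under a linear bijection gives the Fr\'echet isomorphism \eqref{rial}.

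Next I would read off the seminorms of the basis vectors: since $e_n = n^{-s}$ has coefficients $(\delta_{mn})_m$, the displayed identity gives $\Vert e_n\Vert_{2,k} = n^{-1/k} = b_k(n)$, so the matrix associated with the monomial basis is exactly $B$. Arguing by contradiction, suppose $\{e_n\}_n$ is an absolute basis. Then \cite[Lemma~27.25]{meise1997introduction} yields an isomorphism $\mathcal{H}_{+} \to \lambda_1(B)$ carrying each $e_n$ to the $n$-th canonical vector; composing with $\Phi$ (which does the same) produces a basis-preserving, hence diagonal, isomorphism between $\lambda_2(B)$ and $\lambda_1(B)$, forcing $\lambda_1(B) = \lambda_2(B)$ with equivalent topologies. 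This is exactly what \cite[Proposition~28.16]{meise1997introduction} excludes, so no absolute basis can exist.

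The only substantive verification is therefore that \cite[Proposition~28.16]{meise1997introduction} does apply, i.e. that $B$ fails the Grothendieck--Pietsch condition: for all $k,m \in \mathbb{N}$ one has $\sum_n \frac{b_k(n)}{b_m(n)} = \sum_n n^{1/m - 1/k} = \infty$, because the exponent satisfies $1/m - 1/k > -1$ (equivalently $1/k - 1/m < 1$ since $1/k \leq 1$). This is the same computation that makes $\mathcal{H}_{+}$ non-nuclear in Lemma~\ref{nuclear}. I expect this matching of the abstract criterion to the concrete matrix $B$ --- and, if one prefers the abstract-isomorphism reading of \cite[Proposition~28.16]{meise1997introduction}, the observation that the isomorphism coming from absoluteness respects the basis --- to be the only delicate point; the identification \eqref{rial} is a routine consequence of the $\ell_2$ structure of $\mathcal{H}^2$.
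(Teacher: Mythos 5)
Your proposal is correct and follows essentially the same route as the paper: the identification $\mathcal{H}_{+}=\lambda_{2}(B)$ via the coefficient map (which the paper dispatches as a ``straightforward computation'' using the $\ell_{2}$-isometry of $\mathcal{H}^{2}$), followed by the contradiction between \cite[Lemma~27.25]{meise1997introduction} and \cite[Proposition~28.16]{meise1997introduction}. The only difference is that you make explicit two details the paper leaves implicit --- that both isomorphisms are coefficient maps, so absoluteness would force the set equality $\lambda_{1}(B)=\lambda_{2}(B)$, and that $B$ fails the Grothendieck--Pietsch condition because $\sum_{n} n^{1/m-1/k}=\infty$ for all $k,m$ --- both of which are verified correctly.
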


\begin{obs}
Recall from Remark~\ref{inclusiones1} that $\mathcal{H}^{\infty}_{+}  \subseteq \mathcal{H}_{+}$ (as sets), and by the definition of the seminorms, the inclusion  is continuous . Furthermore, 
being $\mathcal{H}^{\infty}_{+}$   a Fr\'echet-Schwartz space \cite[Theorem~2.2]{bonet2018frechet} it is Montel \cite[Remark~24.24]{meise1997introduction}, and the inclusion $\mathcal{H}^{\infty}_{+} \hookrightarrow \mathcal{H}_{+}$ is Montel. From \cite[Proposition~2.3]{bonet2018frechet} and \eqref{rial}, both spaces are not isomorphic.
\end{obs}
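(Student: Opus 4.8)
The plan is to prove the two assertions of the remark separately, starting with the claim that the inclusion $\mathcal{H}^{\infty}_{+}\hookrightarrow\mathcal{H}_{+}$ is a Montel operator. I would first record continuity at the level of the defining seminorms: writing $p_{k}$ for the $k$-th seminorm of $\mathcal{H}^{\infty}_{+}$, the contractive embedding $\mathcal{H}^{\infty}\hookrightarrow\mathcal{H}^{2}$ of Remark~\ref{inclusiones1} yields
\[
	\Big\Vert \textstyle\sum a_{n}n^{-s}\Big\Vert_{2,k}
	=\Big\Vert \textstyle\sum \tfrac{a_{n}}{n^{1/k}}\,n^{-s}\Big\Vert_{\mathcal{H}^{2}}
	\leq\Big\Vert \textstyle\sum \tfrac{a_{n}}{n^{1/k}}\,n^{-s}\Big\Vert_{\mathcal{H}^{\infty}}
	=p_{k}\Big(\textstyle\sum a_{n}n^{-s}\Big),
\]
so the identity is continuous from the topology of $\mathcal{H}^{\infty}_{+}$ into that of $\mathcal{H}_{+}$. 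To upgrade continuity to the Montel property I would use that $\mathcal{H}^{\infty}_{+}$ is Fr\'echet--Schwartz by \cite[Theorem~2.2]{bonet2018frechet}, hence Montel by \cite[Remark~24.24]{meise1997introduction}: every bounded $B\subseteq\mathcal{H}^{\infty}_{+}$ is relatively compact in $\mathcal{H}^{\infty}_{+}$, and a continuous linear map preserves relative compactness, so its image is relatively compact in $\mathcal{H}_{+}$. Thus the inclusion sends bounded sets to relatively compact sets, i.e.\ it is Montel.

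The substantive claim is the non-isomorphism, and here my strategy is to exhibit a linear-topological invariant enjoyed by exactly one of the two spaces and then feed in \eqref{rial} together with \cite[Proposition~2.3]{bonet2018frechet}. By \eqref{rial} the space $\mathcal{H}_{+}$ \emph{is} the K\"othe echelon space $\lambda_{2}(B)$ with $b_{k}(n)=n^{-1/k}$, so its canonical unit vectors---equivalently the Dirichlet monomials, by Lemma~\ref{schauder}---form an unconditional Schauder basis; in particular $\mathcal{H}_{+}$ admits an unconditional basis. The role of \cite[Proposition~2.3]{bonet2018frechet} is to supply the contrasting structural fact for $\mathcal{H}^{\infty}_{+}$, namely that it does not carry such a basis, mirroring the classical failure of unconditionality for the monomials in $\mathcal{H}^{\infty}$. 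Since any isomorphism transports an unconditional basis to an unconditional basis, the property of admitting one is an isomorphism invariant, and a space possessing it cannot be isomorphic to one lacking it; hence $\mathcal{H}^{\infty}_{+}$ and $\mathcal{H}_{+}$ are not isomorphic.

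The main obstacle is exactly this last matching step: one must read off from \cite[Proposition~2.3]{bonet2018frechet} the precise feature separating the two spaces---whether it is ``admits an unconditional basis'', the equivalent ``is isomorphic to a K\"othe echelon space'', or a different invariant---and then confirm both that it is preserved under topological isomorphism and that \eqref{rial} places $\mathcal{H}_{+}$ on the opposite side of the resulting dichotomy from $\mathcal{H}^{\infty}_{+}$. I would finish by quoting the exact statement of \cite[Proposition~2.3]{bonet2018frechet} and verifying this split; everything else is routine.
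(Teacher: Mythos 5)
Your argument is correct and follows the same route the paper intends: continuity of the inclusion from the pointwise comparison $\Vert\cdot\Vert_{2,k}\leq\Vert\cdot\Vert_{\infty,k}$, the Montel property of the operator from the fact that $\mathcal{H}^{\infty}_{+}$ is Fr\'echet--Schwartz (hence Montel) combined with continuity, and non-isomorphism by contrasting the K\"othe-echelon/unconditional-basis structure of $\mathcal{H}_{+}=\lambda_{2}(B)$ with the negative structural statement of \cite[Proposition~2.3]{bonet2018frechet} for $\mathcal{H}^{\infty}_{+}$. The paper itself gives no more detail than these citations, and your reconstruction (including the explicit check that an isomorphism transports an unconditional basis) fills them in exactly as intended.
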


\section{Connection with spaces of holomorphic functions}

It is a well known and important fact within the theory that Dirichlet series are closely related with holomorphic functions on infinite-dimensional Banach spaces. The space $\mathcal{H}_{\infty}$ is isometrically isomorphic to the the space of bounded holomorphic functions on $B_{c_{0}}$ (the open unit ball of $c_{0}$) and each $\mathcal{H}^{p}$ is isometrically isomorphic to a certain Hardy space of holomorphic functions on $\ell_{2} \cap \mathbb{D}^{\mathbb{N}}$ (see \cite{queffelec2013diophantine}, \cite[Chapters~3 and~13]{defant2018Dirichlet} or \cite{badefrmase2017} for details). Our aim in this section is to see to what extent can we connect the new spaces with spaces of holomorphic functions.\\
First of all, let us recall that a function $f: U \to \mathbb{C}$ (where $U$ is an open subset of some normed space $X$) is said to be holomorphic if it is Fr\'echet differentiable at every point of $U$. We say that $X$ is a Banach sequence space if it is a vector subspace of $\mathbb{C}^{I}$ (where $I$ is either a finite set or $\mathbb{N}$) endowed with a complete norm and satisfying that, if $x, y \in \mathbb{C}^{I}$ are so that $x \in X$ and $\vert y_{i} \vert \leq \vert x_{i} \vert$ for every $i$, then $y \in X$ and $\Vert y \Vert \leq \Vert x \Vert$. An open subset $R \subseteq X$ is a complete Reinhardt domain if whenever $x \in R$ and $y \in \mathbb{C}^{I}$ are so that $\vert y_{i} \vert \leq \vert x_{i} \vert$ for every $i$, then $y \in R$. If $R$ is a complete Reinhardt domain, then every holomorphic function $f: R \to \mathbb{C}$ defines a unique family of coefficients $\big( c_{\alpha}(f) \big)_{\alpha}$ (where $\alpha$ runs over $\mathbb{N}_{0}^{I}$ if $I$ is finite and on  $\mathbb{N}_{0}^{(\mathbb{N})}$  if $I$ is $\mathbb{N}$). If $I$ is finite then $f(z) = \sum_{\alpha} c_{\alpha}(f) z^{\alpha}$ for every $z \in R$, while if $I=\mathbb{N}$ this may not be the case. A detailed account on these topics can be found in \cite[Chapter~15]{defant2018Dirichlet}.\\

For $N$ and $k$ we write 
\[
\mathfrak{p}^{-1/k}  \mathbb{D}^{N} = \mathfrak{p}^{-1/k}_{1}  \mathbb{D} \times \cdots \times \mathfrak{p}^{-1/k}_{N}  \mathbb{D}  
= \big\{ z \in \mathbb{C}^{N} \colon \vert z_{j} \vert < \mathfrak{p}_{j}^{-1/k}, \, j = 1, \ldots , N
\big\}
\]
and define the space $H^{p} (\mathfrak{p}^{-1/k}  \mathbb{D}^{N} )$ as the space of holomorphic functions $g: \mathfrak{p}^{-1/k}  \mathbb{D}^{N}  \to \mathbb{C}$ so that
\[
\Vert g \Vert_{p} :=
\sup_{\genfrac{}{}{0pt}{}{0<r_{j}< \mathfrak{p}_{j}^{-1/k}}{ j = 1, \ldots , N}}
\bigg( \int_{\mathbb{T}^{N}} \vert g(r_{1}z_{1}, \ldots , r_{N}z_{N}) \vert^{p} dz 
\bigg)^{\frac{1}{p}} < \infty \,.
\]
To functions of $N$ variables correspond Dirichlet series that depend only on the first $N$ primes. We denote $\mathcal{P}_{N} = \{ \mathfrak{p}_{1}^{\alpha_{1}} \in \mathbb{N} \cdots \mathfrak{p}_{N}^{\alpha_{N}}  \colon (\alpha_{1}, \ldots , \alpha_{N}) \in \mathbb{N}_{0}^{N} \}$ and consider the space
\[
\mathcal{H}_{k}^{p,(N)} = \big\{ \textstyle\sum a_{n} n^{-s} \in \mathcal{H}_{k}^{p} \colon a_{n} \neq 0 \Rightarrow n \in \mathcal{P}_{N} \big\} \,.
\]

\begin{prop} \label{praetroius}
For every $k, N\in \mathbb{N}$ and $1 \leq p < \infty$ we have
$\mathcal{H}_{k}^{p,(N)} = H^{p} (\mathfrak{p}^{-1/k}  \mathbb{D}^{N} )$ and, if $\sum a_{n} n^{-s}$ and $g$ are related to each other, then $a_{n} = c_{\alpha}(g)$ whenever $n = \mathfrak{p}^{\alpha}$. 
\end{prop}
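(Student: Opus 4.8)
The plan is to realise the claimed identification as a composition of the classical finite-variable Bohr correspondence with an elementary dilation of the polydisc, keeping track of how the translation built into the norm $\Vert\cdot\Vert_{p,k}$ interacts with the rescaling of the variables. Write $\mathcal{H}^{p,(N)}$ for the subspace of $\mathcal{H}^{p}$ consisting of those series supported on $\mathcal{P}_{N}$. Recall that on this subspace the Bohr transform
\[
\mathfrak{B}\Big( \sum_{n \in \mathcal{P}_{N}} b_{n} n^{-s} \Big) = \sum_{\alpha \in \mathbb{N}_{0}^{N}} b_{\mathfrak{p}^{\alpha}} z^{\alpha}
\]
is an isometric isomorphism $\mathfrak{B} \colon \mathcal{H}^{p,(N)} \to H^{p}(\mathbb{D}^{N})$, where $H^{p}(\mathbb{D}^{N})$ denotes the Hardy space of the polydisc with the norm $\sup_{0<r_{j}<1}\big(\int_{\mathbb{T}^{N}} \vert h(r_{1}z_{1}, \ldots, r_{N}z_{N}) \vert^{p}\, dz\big)^{1/p}$; this is classical (see \cite[Chapter~13]{defant2018Dirichlet}) and I would invoke it directly.

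I would then record two further isometries. First, by the very definition of $\Vert\cdot\Vert_{p,k}$, the translation $D = \sum a_{n} n^{-s} \mapsto D_{k} = \sum \frac{a_{n}}{n^{1/k}} n^{-s}$ is an isometric isomorphism $\mathcal{H}_{k}^{p,(N)} \to \mathcal{H}^{p,(N)}$; composing it with $\mathfrak{B}$ sends $D$ to $h(z) = \sum_{\alpha} a_{\mathfrak{p}^{\alpha}} (\mathfrak{p}^{-1/k})^{\alpha} z^{\alpha}$, since $n^{-1/k} = (\mathfrak{p}^{-1/k})^{\alpha}$ whenever $n = \mathfrak{p}^{\alpha}$. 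Second, define $g$ on $\mathfrak{p}^{-1/k}\mathbb{D}^{N}$ by $g(w) = h(\mathfrak{p}_{1}^{1/k} w_{1}, \ldots, \mathfrak{p}_{N}^{1/k} w_{N})$, so that $g(w) = \sum_{\alpha} a_{\mathfrak{p}^{\alpha}} w^{\alpha}$ and, conversely, $h(z) = g(\mathfrak{p}^{-1/k} z)$. The radial substitution $r_{j} = \mathfrak{p}_{j}^{1/k}\rho_{j}$, which maps $(0,\mathfrak{p}_{j}^{-1/k})$ onto $(0,1)$, gives
\[
\sup_{0<\rho_{j}<\mathfrak{p}_{j}^{-1/k}} \int_{\mathbb{T}^{N}} \vert g(\rho_{1}z_{1}, \ldots, \rho_{N}z_{N}) \vert^{p}\, dz = \sup_{0<r_{j}<1} \int_{\mathbb{T}^{N}} \vert h(r_{1}z_{1}, \ldots, r_{N}z_{N}) \vert^{p}\, dz,
\]
so the dilation $h \mapsto g$ is an isometric isomorphism $H^{p}(\mathbb{D}^{N}) \to H^{p}(\mathfrak{p}^{-1/k}\mathbb{D}^{N})$.

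Chaining the three isometries yields $\mathcal{H}_{k}^{p,(N)} = H^{p}(\mathfrak{p}^{-1/k}\mathbb{D}^{N})$, and by construction $D = \sum a_{n} n^{-s}$ is sent to $g(w) = \sum_{\alpha} a_{\mathfrak{p}^{\alpha}} w^{\alpha}$. Since $\mathfrak{p}^{-1/k}\mathbb{D}^{N}$ is a complete Reinhardt domain and $N$ is finite, this is precisely the monomial expansion of $g$, so $c_{\alpha}(g) = a_{\mathfrak{p}^{\alpha}}$, i.e. $a_{n} = c_{\alpha}(g)$ whenever $n = \mathfrak{p}^{\alpha}$, as required. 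I expect the only genuinely substantive ingredient to be the classical finite-dimensional Bohr correspondence — in particular its surjectivity, namely the reconstruction of a Dirichlet series in $\mathcal{H}^{p,(N)}$ from the Taylor coefficients of an arbitrary element of $H^{p}(\mathbb{D}^{N})$. By contrast the dilation is a routine radial change of variables; the only points needing a line of care are that this substitution preserves the supremum defining the $H^{p}$-norm and that the monomial coefficients pick up exactly the scalar factors $(\mathfrak{p}^{-1/k})^{\alpha}$.
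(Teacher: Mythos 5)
Your proposal is correct and follows essentially the same route as the paper: both proofs combine the finite-variable Bohr correspondence $\mathcal{H}^{p,(N)} \cong H^{p}(\mathbb{D}^{N})$ with the dilation $h \mapsto h(\mathfrak{p}^{1/k}\,\cdot\,)$ and track how the monomial coefficients pick up the factors $(\mathfrak{p}^{\pm 1/k})^{\alpha}$. The only cosmetic difference is that you package the argument as a chain of three isometric bijections, while the paper verifies the two inclusions separately; the substance is identical.
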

\begin{proof}
Choose some $\sum a_{n} n^{-s} \in \mathcal{H}_{k}^{p,(N)}$, then $\sum \frac{a_{n}}{n^{1/k}} n^{-s} \in \mathcal{H}^{p}$ and depends only on the first $N$ primes. Then (see for example \cite[page~316]{defant2018Dirichlet}) we can find $f \in H^{p}(\mathbb{D}^{N})$ so that $c_{\alpha}(f)= \frac{a_{n}}{n^{1/k}} $ whenever $n = \mathfrak{p}^{\alpha}$ and 
\[
\Vert f \Vert_{p} = \big\Vert \textstyle\sum \frac{a_{n}}{n^{1/k}} n^{-s} \big\Vert_{\mathcal{H}^{p}}
= \big\Vert \textstyle\sum a_{n} n^{-s} \big\Vert_{\mathcal{H}_{k}^{p}} \,.
\]
Define now a function $g : \mathfrak{p}^{-1/k}  \mathbb{D}^{N}  \to \mathbb{C}$ by $g(z)= f \big( \mathfrak{p}^{1/k}_{1} z_{1}, \ldots  , \mathfrak{p}^{1/k}_{N} z_{N} \big)$. This is clearly holomorphic and
\[
\sup_{\genfrac{}{}{0pt}{}{0<r_{j}< \mathfrak{p}_{j}^{-1/k}}{ j = 1, \ldots , N}}
\bigg( \int_{\mathbb{T}^{N}} \vert g(r_{1}z_{1}, \ldots , r_{N}z_{N}) \vert^{p} dz 
\bigg)^{\frac{1}{p}} 
= \sup_{\genfrac{}{}{0pt}{}{0<s_{j}< 1}{ j = 1, \ldots , N}}
\bigg( \int_{\mathbb{T}^{N}} \vert f(s_{1}z_{1}, \ldots , s_{N}z_{N}) \vert^{p} dz 
\bigg)^{\frac{1}{p}} 
= \Vert f \Vert_{H^{p}(\mathbb{D}^{N})} \,.
\]
Hence $g \in H^{p} (\mathfrak{p}^{-1/k}  \mathbb{D}^{N} )$ and, moreover,
\[
g(z) =  f\big( \mathfrak{p}^{1/k}_{1} z_{1}, \ldots  , \mathfrak{p}^{1/k}_{N} z_{N} \big) 
= \sum_{\alpha \in \mathbb{N}_{0}^{N}} c_{\alpha}(f) \big( \mathfrak{p}^{1/k} z \big)^{\alpha}
= \sum_{\alpha \in \mathbb{N}_{0}^{N}} c_{\alpha}(f) \big( \mathfrak{p}^{1/k} \big)^{\alpha} z^{\alpha}
\]
for every $z \in \mathfrak{p}^{-1/k}  \mathbb{D}^{N}$. By the uniqueness of the monomial coefficients,
\[
c_{\alpha} (g) = c_{\alpha}(f) \big( \mathfrak{p}^{1/k} \big)^{\alpha} = \frac{a_{n}}{n^{1/k}}n^{1/k} = a_{n}
\]
if $n = \mathfrak{p}^{\alpha}$, and $\mathcal{H}_{k}^{p,(N)} \hookrightarrow H^{p} (\mathfrak{p}^{-1/k}  \mathbb{D}^{N} )$.\\
On the other hand, given $g \in H^{p} (\mathfrak{p}^{-1/k}  \mathbb{D}^{N} )$ define $a_{n} = c_{\alpha}(g)$ for $n= \mathfrak{p}^{\alpha}$ and consider the Dirichlet series $\sum a_{n}n^{-s}$ (note that $a_{n}=0$ for every $n \notin \mathcal{P}_{N}$). Essentially the same computations as before show that the function $f: \mathbb{D}^{N} \to \mathbb{C}$ given by $f(z) = g\big( \mathfrak{p}^{-1/k}_{1} z_{1}, \ldots  , \mathfrak{p}^{-1/k}_{N} z_{N} \big) $ belongs to $H^{p}(\mathbb{D}^{N})$ and $c_{\alpha}(f) = \frac{c_{\alpha}(g)}{(\mathfrak{p}^{1/k})^{\alpha}}$. Then (recall again \cite[page~316]{defant2018Dirichlet}) we can find $\sum b_{n} n^{\-s} \in \mathcal{H}^{p}$ so that $b_{n} = c_{\alpha} (f) = \frac{c_{\alpha}(g)}{(\mathfrak{p}^{1/k})^{\alpha}} =  \frac{a_{n}}{n^{1/k}}$. This gives $\sum a_{n} n^{-s} \in \mathcal{H}_{k}^{p,(N)}$ and completes the proof.
\end{proof}

In $H(\mathbb{D}^{N})$, the space of all holomorphic functions on $\mathbb{D}^{N}$ we define, for each $k$ and $p$,
\[
\rho_{k,p} (f) = \bigg( \int_{\mathbb{T}^{N}} \big\vert f \big( \mathfrak{p}_{1}^{-1/k} z_{1} , \ldots ,  \mathfrak{p}_{N}^{-1/k} z_{N} \big) \big\vert^{p} dz \bigg)^{\frac{1}{p}} \,.
\]

\begin{prop} \label{gamelan}
\[
\mathcal{H}_{+}^{(N)} := \big\{ \textstyle\sum a_{n} n^{-s} \in \mathcal{H}_{+} \colon a_{n} \neq 0 \Rightarrow n \in \mathcal{P}_{N} \big\}  =  H(\mathbb{D}^{N}) 
\]
and, if $f$ and $\sum a_{n} n^{-s}$ are associated to each other,
\[
\big\Vert \textstyle\sum a_{n} n^{-s} \big\Vert_{p,k} = \rho_{k,p} (f)
\]
for every $k$ and $1 \leq p < \infty$.
\end{prop}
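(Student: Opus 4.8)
The plan is to obtain the identity by passing to the projective limit in Proposition~\ref{praetroius}. Recall that $\mathcal{H}_{+} = \bigcap_{k} \mathcal{H}_{k}^{p}$ as a Fr\'echet space, so restricting to series supported on $\mathcal{P}_{N}$ gives $\mathcal{H}_{+}^{(N)} = \bigcap_{k} \mathcal{H}_{k}^{p,(N)}$, and by Proposition~\ref{praetroius} each factor equals $H^{p}(\mathfrak{p}^{-1/k}\mathbb{D}^{N})$ under the coefficient correspondence $a_{n} = c_{\alpha}(g)$, $n = \mathfrak{p}^{\alpha}$, which does not depend on $k$. The key geometric observation is that, since $\mathfrak{p}_{j}^{-1/k} \uparrow 1$ as $k \to \infty$ for each fixed $j$ and $N$ is finite, the polydiscs $\mathfrak{p}^{-1/k}\mathbb{D}^{N}$ form an increasing sequence whose union is exactly $\mathbb{D}^{N}$; moreover each closed polydisc $\overline{\mathfrak{p}^{-1/k}\mathbb{D}^{N}}$ is a compact subset of $\mathbb{D}^{N}$.

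For the set equality I argue by the two inclusions. Given $f \in H(\mathbb{D}^{N})$, for each $k$ the function $f$ is holomorphic on a neighbourhood of the compact polydisc $\overline{\mathfrak{p}^{-1/k}\mathbb{D}^{N}}$, hence bounded there, so its restriction lies in $H^{p}(\mathfrak{p}^{-1/k}\mathbb{D}^{N}) = \mathcal{H}_{k}^{p,(N)}$; as this holds for all $k$, the Dirichlet series with $a_{\mathfrak{p}^{\alpha}} = c_{\alpha}(f)$ belongs to $\mathcal{H}_{+}^{(N)}$. Conversely, a series $\sum a_{n} n^{-s} \in \mathcal{H}_{+}^{(N)}$ produces for each $k$ a function $g_{k} \in H^{p}(\mathfrak{p}^{-1/k}\mathbb{D}^{N})$ with the same monomial coefficients $c_{\alpha}(g_{k}) = a_{\mathfrak{p}^{\alpha}}$; by the uniqueness of the monomial expansion on a complete Reinhardt domain in finitely many variables, $g_{k+1}$ restricts to $g_{k}$, so the $g_{k}$ glue to a single holomorphic function $f$ on $\bigcup_{k} \mathfrak{p}^{-1/k}\mathbb{D}^{N} = \mathbb{D}^{N}$, that is $f \in H(\mathbb{D}^{N})$ with $c_{\alpha}(f) = a_{\mathfrak{p}^{\alpha}}$.

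Finally, for the seminorm equality fix $k$ and $1 \le p < \infty$ and let $g = g_{k}$ be the restriction of the associated $f$ to $\mathfrak{p}^{-1/k}\mathbb{D}^{N}$. By Proposition~\ref{praetroius},
\[
\big\Vert \textstyle\sum a_{n} n^{-s} \big\Vert_{p,k} = \Vert g \Vert_{H^{p}(\mathfrak{p}^{-1/k}\mathbb{D}^{N})} = \sup_{0 < r_{j} < \mathfrak{p}_{j}^{-1/k}} \bigg( \int_{\mathbb{T}^{N}} \big\vert f(r_{1}z_{1}, \ldots, r_{N}z_{N}) \big\vert^{p} \, dz \bigg)^{1/p}.
\]
The heart of the matter, and the step I expect to be the main obstacle, is to show that this supremum is attained in the limit at the boundary radii $r_{j} = \mathfrak{p}_{j}^{-1/k}$, so that it equals $\rho_{k,p}(f)$. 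This rests on the classical fact that the integral mean $(r_{1}, \ldots, r_{N}) \mapsto \int_{\mathbb{T}^{N}} \vert f(r_{1}z_{1}, \ldots, r_{N}z_{N}) \vert^{p} \, dz$ is non-decreasing in each radius $r_{j}$ separately, which follows by applying, one variable at a time, the subharmonicity of $\vert f \vert^{p}$ and the monotonicity of circular means of subharmonic functions. The supremum is therefore the limit as $r_{j} \uparrow \mathfrak{p}_{j}^{-1/k}$, and since $f$ is holomorphic on all of $\mathbb{D}^{N}$ it is bounded and continuous on a neighbourhood of $\overline{\mathfrak{p}^{-1/k}\mathbb{D}^{N}}$, so dominated convergence lets us evaluate the limit at $r_{j} = \mathfrak{p}_{j}^{-1/k}$, yielding exactly $\rho_{k,p}(f)$. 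Since the seminorms $\{\rho_{k,p}\}_{k}$ control uniform convergence on the compact subsets of $\mathbb{D}^{N}$ and hence generate its usual Fr\'echet topology, this identification is moreover a topological isomorphism.
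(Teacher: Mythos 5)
Your argument is correct and follows essentially the same route as the paper's proof: reduce to Proposition~\ref{praetroius} for each fixed $k$, glue the functions $g_{k}$ via uniqueness of the monomial coefficients on the increasing exhaustion $\bigcup_{k}\mathfrak{p}^{-1/k}\mathbb{D}^{N}=\mathbb{D}^{N}$, and identify $\Vert\cdot\Vert_{p,k}$ with $\rho_{k,p}$. The only difference is that you spell out (via monotonicity of the integral means and continuity up to the closed polydisc) why the $H^{p}(\mathfrak{p}^{-1/k}\mathbb{D}^{N})$-supremum equals the single integral $\rho_{k,p}(f)$, a step the paper leaves implicit.
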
	
\begin{proof}
Fix some $1 \leq p < \infty$ and take $\sum a_{n} n^{-s} \in \mathcal{H}_{+}^{(N)}$. Then, for each $k$, the Dirichlet series belongs to $\mathcal{H}_{k}^{p,(N)}$ and, by Proposition~\ref{praetroius} we can find $g_{k} \in H^{p} (\mathfrak{p}^{-1/k} \mathbb{D}^{N})$ so that $c_{\alpha} (g_{k}) = a_{n}$ for every $n= \mathfrak{p}^{\alpha}$. By the uniqueness of the coefficients one easily gets that $g_{k}|_{\mathfrak{p}^{-1/j} \mathbb{D}^{N}} = g_{j}$ for $k \geq j$ and, therefore, we may define a holomorphic function $f : \mathbb{D}^{N} \to \mathbb{C}$ so that $c_{\alpha}(f) = a_{n}$. Moreover $\rho_{k,p} (f)= \Vert g_{k} \Vert_{H^{p} (\mathfrak{p}^{-1/k} \mathbb{D}^{N})}
= \big\Vert \textstyle \sum a_{n} n^{-s} \big\Vert_{p,k}$. \\
On the other hand, the restriction of every holomorphic function $f : \mathbb{D}^{N} \to \mathbb{C}$ clearly belongs to $H^{p} (\mathfrak{p}^{-1/k} \mathbb{D}^{N})$ (and the norm equals $\rho_{k,p}(f)$). Proposition~\ref{praetroius} gives that $\sum a_{n} n^{-s}$ (where  $a_{n}= c_{\alpha}(f)$) belongs to $\mathcal{H}_{k}^{p,(N)}$ for every $k$ and, hence to $\mathcal{H}_{+}^{(N)}$.
\end{proof}

\begin{prop} \label{Hc}
Let $E$ be either $\mathcal{H}_{+}^{\infty}$ or $\mathcal{H}_{+}$ and $\Vert \cdot \Vert_{k}$ denote in each case either $\Vert \cdot \Vert_{\infty,k}$ or $\Vert \cdot \Vert_{2,k}$. Then $\sum a_{n} n^{-s} \in E$ if and only if
$\sum_{n \in \mathcal{P}_{N}} a_{n} n^{-s} \in E$ for every $N$ and $\sup_{N} \big\Vert \sum_{n \in \mathcal{P}_{N}} a_{n} n^{-s} \big\Vert_{k} < \infty$ for every $k$.
\end{prop}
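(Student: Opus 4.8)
The plan is to reduce the statement about the full Dirichlet series to the finite-dimensional description provided by Proposition~\ref{gamelan}, so that the truncations $\sum_{n \in \mathcal{P}_N} a_n n^{-s}$ become the building blocks. First I would observe that for each fixed $N$, the truncation $\sum_{n \in \mathcal{P}_N} a_n n^{-s}$ is exactly the projection of $\sum a_n n^{-s}$ onto those indices $n \in \mathcal{P}_N$, and it lies in the subspace $E^{(N)}$ (either $\mathcal{H}_{+}^{\infty,(N)}$ or $\mathcal{H}_{+}^{(N)}$) treated in Proposition~\ref{gamelan}. The content of the proposition is then a \emph{monotone convergence / completeness} statement: membership in $E$ is equivalent to uniform control of these finite truncations together with their individual membership.

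For the forward implication, if $\sum a_n n^{-s} \in E$ then each seminorm $\Vert \cdot \Vert_k$ is finite, and since passing to the truncation onto $\mathcal{P}_N$ is a norm-decreasing projection on each $\mathcal{H}^{p}_k$ (equivalently, on $H^p(\mathbb{D}^N)$ via Proposition~\ref{gamelan}, it is the restriction/projection onto monomials in the first $N$ variables, which does not increase the $H^p$ norm), I get $\big\Vert \sum_{n \in \mathcal{P}_N} a_n n^{-s} \big\Vert_k \leq \big\Vert \sum a_n n^{-s} \big\Vert_k$ uniformly in $N$, giving both the membership of each truncation and the required supremum bound. The $\mathcal{H}_+^{\infty}$ case is handled by the analogous statement for $\Vert \cdot \Vert_{\infty,k}$, where the projection onto the first $N$ primes is again a contraction.

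For the converse, the key step is to show that the bound $\sup_N \big\Vert \sum_{n \in \mathcal{P}_N} a_n n^{-s} \big\Vert_k < \infty$ for every $k$ forces $\sum a_n n^{-s}$ itself to have finite seminorm $\Vert \cdot \Vert_k$ for every $k$, which is precisely membership in $E$. In the $\mathcal{H}_+$ ($\mathcal{H}^2$) case this is transparent: $\big\Vert \sum_{n \in \mathcal{P}_N} a_n n^{-s} \big\Vert_{2,k}^2 = \sum_{n \in \mathcal{P}_N} \vert a_n \vert^2 / n^{2/k}$ increases to $\sum_n \vert a_n \vert^2 / n^{2/k}$, so a uniform bound in $N$ yields the full sum is finite, i.e. $\Vert \sum a_n n^{-s} \Vert_{2,k} < \infty$. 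For general $E$ one argues that the truncations form a bounded (hence, since each $\mathcal{H}^p_k$ is a Banach space, weakly convergent along a subsequence) net whose limit must have the prescribed coefficients $a_n$; uniqueness of coefficients then identifies the limit with $\sum a_n n^{-s}$, placing it in $\mathcal{H}^p_k$ for every $k$ and hence in $\mathcal{H}_+$.

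The main obstacle I anticipate is the converse direction in the $\mathcal{H}_+^{\infty}$ case, where there is no inner-product structure to make the monotone-limit argument immediate. Here the natural route is to use the half-plane picture: each truncation defines a holomorphic function on $\mathbb{C}_{-1/k}$ (equivalently a bounded holomorphic function on the appropriate Reinhardt domain via the Bohr lift), the uniform bound on $\Vert \cdot \Vert_{\infty,k}$ gives a locally uniformly bounded family, and a normal-families (Montel) argument together with the pointwise convergence of the truncated series on the relevant half-plane identifies the limit as $\sum a_n n^{-s}$ with $\sigma_u \leq 0$ after translation, i.e. membership in $\mathcal{H}_+^{\infty}$. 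One should double-check that the convergence of the truncations $\sum_{n \in \mathcal{P}_N} a_n n^{-s}$ to $\sum a_n n^{-s}$ is genuinely controlled in each seminorm and not merely pointwise; the uniform supremum hypothesis over $N$ is exactly what bridges this gap, and verifying it rigorously is the technical heart of the argument.
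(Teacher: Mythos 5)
Your proposal is correct, but note that the paper actually states Proposition~\ref{Hc} without proof: it is treated as a direct consequence of the classical ``finite section'' criteria for Hardy spaces of Dirichlet series --- Hilbert's criterion for $\mathcal{H}^{\infty}$ and \cite[Corollary~13.9]{defant2018Dirichlet} for $\mathcal{H}^{p}$ --- applied to each translate $D_{k}=\sum a_n n^{-1/k}n^{-s}$ separately. What you have written essentially reconstructs those criteria from scratch, and both halves of your argument are sound: the forward direction via the norm-one projections onto the first $N$ primes (conditional expectation on $H_p(\mathbb{T}^\infty)$, restriction to $z_{N+1}=z_{N+2}=\cdots=0$ on the Bohr lift for $\mathcal{H}^\infty$), and the converse via monotone convergence of $\sum_{n\in\mathcal{P}_N}|a_n|^2/n^{2/k}$ in the Hilbertian case and Montel plus Bohr's theorem in the $\mathcal{H}^{\infty}_{+}$ case. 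The one point you should make explicit in the latter argument is how you identify the normal-families limit with $D$ itself: you first need to know that $\sum a_n n^{-s}$ converges \emph{somewhere}, so that the truncations converge pointwise to it on some far-right half-plane and the identity theorem applies. This follows from the hypothesis, since the uniform bound $\sup_N\Vert\sum_{n\in\mathcal{P}_N}a_nn^{-s}\Vert_{\infty,k}=C_k$ forces $|a_n|\le C_k\,n^{1/k}$ (coefficients are dominated by the $\mathcal{H}^{\infty}$ norm), hence $\sigma_a(D)<\infty$; once that is in place, every subsequential Montel limit on $\mathbb{C}_{1/k}$ agrees with $D$ on a common half-plane, so $D$ extends boundedly to $\mathbb{C}_{1/k}$ for every $k$ and Bohr's theorem gives $\sigma_u(D)\le 0$. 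With that detail supplied, your proof is complete and is, in substance, the standard proof of the results the authors chose to cite implicitly.
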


For each $1 \leq p < \infty$ we define the space $H_{+}^{p} (\ell_{2} \cap \mathbb{D}^{\mathbb{N}})$ as consisting of those holomorphic functions $f : \ell_{2} \cap \mathbb{D}^{\mathbb{N}} \to \mathbb{C}$ satisfying
\begin{equation} \label{jansen}
\sup_{N} 
\bigg( \int_{\mathbb{T}^{N}} \big\vert f \big( \mathfrak{p}_{1}^{-1/k} z_{1} , \ldots ,  \mathfrak{p}_{N}^{-1/k} z_{N}  , 0,0, \ldots \big) \big\vert^{p} dz \bigg)^{\frac{1}{p}} < \infty 
\end{equation}
for every $k$. \\

Given $f \in H_{+}^{p} (\ell_{2} \cap \mathbb{D}^{\mathbb{N}})$ we may consider coefficients $a_{n} = c_{\alpha}(f)$ (with $n=\mathfrak{p}^{\alpha}$) and the Dirichlet series $\iota(f)=\sum a_{n} n^{-s}$. Note that the restriction $f_{N}$ of $f$ to $\mathbb{D}^{N}$ is obviously holomorphic and $\big( \rho_{k,p}(f_{N}) \big)_{N}$ is bounded by the supremum in \eqref{jansen}. Then Propositions~\ref{gamelan} and~\ref{Hc} yield $\sum a_{n} n^{-s} \in \mathcal{H}_{+}$, and therefore the mapping
\begin{equation}\label{inclusion}
 \iota: H_{+}^{p} (\ell_{2} \cap \mathbb{D}^{\mathbb{N}}) \rightarrow \mathcal{H}_{+}\,
\end{equation}
is an inclusion for every $1 \leq p <\infty$. 

It is very natural to wonder if the previous identification is in fact an isomporhism of F\'echet spaces.  The following example shows that this mapping is not surjective. 

\begin{ej}
The series $D= \sum \frac{1}{\sqrt{\mathfrak{p}_n}} \mathfrak{p}_n^{-s}$ is in $\mathcal H_+$ but it does not belong to $\iota( H_{+}^{p} (\ell_{2} \cap \mathbb{D}^{\mathbb{N}}))$.
To see that this is indeed the case, suppose there is a function $f \in H_+^p (\ell_{2} \cap \mathbb{D}^{\mathbb{N}})$ such that $\iota(f)=D$. Then, $c_{\alpha} (f) = \frac{1}{\sqrt{\mathfrak{p}_n}}$ if $\alpha = e_n$ and $c_{\alpha} (f)=0$ otherwise.  \\
Define now the sequence $z=(z_n)$ as $z_1=1/2$, $z_2=1/2$ and $z_n= 1/ ( \sqrt{n \log(n)} \log \log(n))$ for $n\geq 3$. It is easy to see that $z$ lies in the set $\ell_2 \cap \mathbb{D}^{\mathbb{N}}$. 
For each $N \in \mathbb{N}$ we have 
\[
f(z_1, \ldots, z_N, 0, 0, \dots)
= \sum_{\alpha \in \mathbb{N}_{0}^{N}} c_{\alpha} (f) z^{\alpha} 
=\sum_{n=1}^N  \frac{1}{\sqrt{\mathfrak{p}_n}} z_n \,.
\]
On the other hand, the truncates $\big( (z_1, \ldots, z_N, 0, 0, \dots) \big)_{N}$ clearly converge (in $\ell_{2}$) to $z$. 
Thus, continuity and the prime number theorem yield
\[
f(z) = \lim\limits_{N \to \infty} f(z_1, \cdots, z_N, 0,0, \dots) = \lim\limits_{N \to \infty} \sum\limits_{n=1}^N  \frac{1}{\sqrt{\mathfrak{p}_n}} z_n \geq \lim\limits_{N \to \infty} \sum\limits_{n=3}^N \frac{C}{n\log(n) \log\log(n)} =+\infty.
\]
This is a contradiction and proves our claim.
\end{ej}

 We give a characterization of the space $\mathcal H_{+}$ in terms of holomorphic functions in Corollary  \ref{identificacion}. 
To do that we begin by considering the following two weighted sequence spaces
\[
\ell_{\infty} (\mathfrak{p}^{1/k}) = \big\{ z \in \mathbb{C}^{\mathbb{N}} \colon \Vert z \Vert_{\infty, \mathfrak{p}^{1/k}}:= \sup_{n} \vert z_{n} \mathfrak{p}_{n}^{1/k} \vert < \infty  \big\}
\]
and
\[
\ell_{2} (\mathfrak{p}^{1/k}) = \Big\{ z \in \mathbb{C}^{\mathbb{N}} \colon \Vert z \Vert_{2, \mathfrak{p}^{1/k}}:= \Big( \sum_{n} \vert z_{n} \mathfrak{p}_{n}^{1/k} \vert^{2} \Big)^{\frac{1}{2}} < \infty  \Big\} \,.
\]
Both are Banach sequence spaces, and the set $\ell_{2} (\mathfrak{p}^{1/k}) \cap B_{\ell_{\infty} (\mathfrak{p}^{1/k}) }$ is an open, complete Reinhardt domain in $\ell_{2} (\mathfrak{p}^{1/k}) $, and note that $\big( \ell_{2} (\mathfrak{p}^{1/k}) \cap B_{\ell_{\infty} (\mathfrak{p}^{1/k}) }\big) \cap \mathbb{C}^{N} = \mathfrak{p}^{-1/k} \mathbb{D}^{N}$ for every $N$. Now, for each $1 \leq p < \infty$ we define the space $H^{p} (\ell_{2} (\mathfrak{p}^{1/k}) \cap B_{\ell_{\infty} (\mathfrak{p}^{1/k}) })$ as consisting of those holomorphic functions on $\ell_{2} (\mathfrak{p}^{1/k}) \cap B_{\ell_{\infty} (\mathfrak{p}^{1/k}) }$ so that
\[
\Vert f \Vert := \sup_{N} \sup_{\substack{ 0 < r_{j} < \mathfrak{p}_{j}^{-1/k} \\ j = 1, \ldots , N  }}
\bigg( \int_{\mathbb{T}^{N}} \big\vert f \big( r_{1} z_{1} , \ldots ,  r_{N} z_{N}, 0,0, \ldots \big) \big\vert^{p} dz \bigg)^{\frac{1}{p}} < \infty\,.
\]

\begin{prop}
Let $k \in \mathbb{N}$, then
\[
\mathcal{H}^{2}_{k} = H^{2} (\ell_{2} (\mathfrak{p}^{1/k}) \cap B_{\ell_{\infty} (\mathfrak{p}^{1/k}) })
\]
as Banach spaces.
\end{prop}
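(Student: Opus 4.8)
The plan is to realise the claimed identity as an isometric bijection implemented by the correspondence $a_n = c_\alpha(f)$ whenever $n = \mathfrak{p}^\alpha$, between the monomial coefficients of a holomorphic function and the coefficients of a Dirichlet series. Write $R_k = \ell_{2}(\mathfrak{p}^{1/k}) \cap B_{\ell_{\infty}(\mathfrak{p}^{1/k})}$ for the underlying complete Reinhardt domain; the crucial structural fact, already noted in the excerpt, is that $R_k \cap \mathbb{C}^{N} = \mathfrak{p}^{-1/k}\mathbb{D}^{N}$, which lets one reduce everything, level by level in $N$, to the finite-dimensional identification of Proposition~\ref{praetroius}. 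Throughout I use that the monomial coefficients of a holomorphic function on a complete Reinhardt domain are intrinsically defined and are consistent under the restrictions $f_N(z) := f(z_1, \ldots, z_N, 0, 0, \ldots)$ (see \cite[Chapter~15]{defant2018Dirichlet}), and that, by Proposition~\ref{praetroius}, a $g \in H^{2}(\mathfrak{p}^{-1/k}\mathbb{D}^{N})$ corresponding to $\sum_{n \in \mathcal{P}_N} a_n n^{-s}$ satisfies $\Vert g \Vert_2^2 = \sum_{n \in \mathcal{P}_N} |a_n|^2 / n^{2/k}$.

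For the inclusion $H^{2}(R_k) \subseteq \mathcal{H}^{2}_{k}$ I would start with $f \in H^{2}(R_k)$, set $a_n = c_\alpha(f)$ for $n = \mathfrak{p}^\alpha$, and examine the restrictions $f_N$. Each $f_N$ lies in $H^{2}(\mathfrak{p}^{-1/k}\mathbb{D}^{N})$ with $\Vert f_N \Vert_2 \leq \Vert f \Vert$, and Proposition~\ref{praetroius} identifies it with $\sum_{n \in \mathcal{P}_N} a_n n^{-s}$, so that $\Vert f_N \Vert_2^2 = \sum_{n \in \mathcal{P}_N} |a_n|^2 / n^{2/k}$. Since the sets $\mathcal{P}_N$ increase to $\mathbb{N}$, taking the supremum over $N$ (which is exactly $\Vert f \Vert^2$) gives $\sum_n |a_n|^2 / n^{2/k} = \Vert f \Vert^2 < \infty$; hence $D = \sum a_n n^{-s}$ lies in $\mathcal{H}^{2}_{k}$ with $\Vert D \Vert_{2,k} = \Vert f \Vert$.

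The converse, and the main obstacle, is to manufacture from $D = \sum a_n n^{-s} \in \mathcal{H}^{2}_{k}$ a genuine holomorphic function on the infinite-dimensional domain $R_k$ with the prescribed coefficients $c_\alpha = a_n$ ($n = \mathfrak{p}^\alpha$). I would first show that the monomial series $\sum_\alpha c_\alpha z^\alpha$ converges absolutely on $R_k$: writing $w_n = z_n \mathfrak{p}_n^{1/k}$, one has $|z^\alpha| = |w^\alpha|\, n^{-1/k}$ for $n = \mathfrak{p}^\alpha$, so Cauchy--Schwarz yields
\[
\sum_\alpha |c_\alpha|\, |z^\alpha| \leq \Big( \sum_n \frac{|a_n|^2}{n^{2/k}} \Big)^{1/2} \Big( \sum_\alpha |w^\alpha|^2 \Big)^{1/2} = \Vert D \Vert_{2,k} \Big( \prod_n \frac{1}{1 - |w_n|^2} \Big)^{1/2},
\]
and the product converges because $z \in \ell_{2}(\mathfrak{p}^{1/k})$ forces $\sum_n |w_n|^2 < \infty$ while $z \in B_{\ell_{\infty}(\mathfrak{p}^{1/k})}$ forces $\sup_n |w_n| < 1$. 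The delicate point is to upgrade this pointwise bound to holomorphy: around any $z^0 \in R_k$ I would produce an $\ell_{2}(\mathfrak{p}^{1/k})$-ball on which $\sup_n |w_n| \leq \rho < 1$ and $\sum_n |w_n|^2 \leq M$ stay uniformly controlled (here I use $\Vert \cdot \Vert_{\infty, \mathfrak{p}^{1/k}} \leq \Vert \cdot \Vert_{2, \mathfrak{p}^{1/k}}$, so that $\ell_{2}(\mathfrak{p}^{1/k})$-balls control the sup norm), on which the displayed estimate forces uniform convergence of the partial sums; being a locally uniform limit of holomorphic polynomials, the resulting $f$ is holomorphic on $R_k$.

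It remains to compute the norm and check bijectivity. By construction the restriction $f_N$ has monomial coefficients $c_\alpha$ for $\alpha \in \mathbb{N}_0^{N}$, so Proposition~\ref{praetroius} again gives $\Vert f_N \Vert_2^2 = \sum_{n \in \mathcal{P}_N} |a_n|^2 / n^{2/k}$, whence $\Vert f \Vert = \sup_N \Vert f_N \Vert_2 = \Vert D \Vert_{2,k}$ and $f \in H^{2}(R_k)$. Finally, the two assignments $f \mapsto D$ and $D \mapsto f$ are mutually inverse, since both are governed by the single rule $a_n = c_\alpha(f)$ for $n = \mathfrak{p}^\alpha$, and each preserves the norm; this gives the isometric identification $\mathcal{H}^{2}_{k} = H^{2}(R_k)$ as Banach spaces.
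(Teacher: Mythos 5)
Your proof is correct and follows essentially the same route as the paper: both directions reduce level by level to the finite-dimensional identification of Proposition~\ref{praetroius} via the restrictions $f_N$, and the converse uses exactly the same Cauchy--Schwarz splitting $|z^\alpha| = |w^\alpha|\,n^{-1/k}$ to get absolute convergence of the monomial series on the Reinhardt domain. The only (harmless) differences are that you unpack two citations the paper relies on — you replace \cite[Corollary~13.9]{defant2018Dirichlet} by the explicit $\ell_2$ coefficient formula and monotone convergence over $\mathcal{P}_N$, and you replace the appeal to \cite[Theorem~15.57]{defant2018Dirichlet} by a direct locally uniform convergence argument for holomorphy — both of which are valid.
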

\begin{proof}
Let us begin by taking some $f \in 	H^{2} (\ell_{2} (\mathfrak{p}^{1/k}) \cap B_{\ell_{\infty} (\mathfrak{p}^{1/k}) })$, defining as usual $a_{n} = c_{\alpha}(f)$ for $n= \mathfrak{p}^{\alpha}$ and considering the Dirichlet series $\sum a_{n}n^{-s}$. Fix $N$ and define $f_{N}$ as the restriction of $f$ to $\mathfrak{p}^{-1/k} \mathbb{D}^{N}$, which is holomorphic and satisfies that 
\begin{multline*}
 \sup_{\substack{ 0 < r_{j} < \mathfrak{p}_{j}^{-1/k} \\ j = 1, \ldots , N  }}
\bigg( \int_{\mathbb{T}^{N}} \big\vert f_{N} \big( r_{1} z_{1} , \ldots ,  r_{N} z_{N} \big) \big\vert^{2} dz \bigg)^{\frac{1}{2}} \\
=  \sup_{\substack{ 0 < r_{j} < \mathfrak{p}_{j}^{-1/k} \\ j = 1, \ldots , N  }}
\bigg( \int_{\mathbb{T}^{N}} \big\vert f \big( r_{1} z_{1} , \ldots ,  r_{N} z_{N}, 0,0, \ldots \big) \big\vert^{2} dz \bigg)^{\frac{1}{2}} 
\leq \Vert f \Vert_{H^{2} (\ell_{2} (\mathfrak{p}^{1/k}) \cap B_{\ell_{\infty} (\mathfrak{p}^{1/k}) })} < \infty \,.
\end{multline*}
Hence $f_{N} \in H^{2} (\mathfrak{p}^{-1/k} \mathbb{D}^{N})$ and, by Proposition~\ref{praetroius}, if $b_{n}= c_{\alpha} (f_{N})$ for every $n=\mathfrak{p}^{\alpha}$ with $\alpha \in \mathbb{N}_{0}^{N}$, then the Dirichlet series $\sum b_{n} n^{-s}$ belongs to $\mathcal{H}^{2,(N)}_{k}$. But, since $c_{\alpha}(f_{N}) = c_{\alpha}(f)$ for every  $\alpha \in \mathbb{N}_{0}^{N}$, this implies that $\sum_{n \in \mathcal{P}_{N}} a_{n} n^{-s} \in \mathcal{H}^{2,(N)}_{k}$ for every $N$ or, in other words, $\sum_{n \in \mathcal{P}_{N}} \frac{a_{n}}{n^{1/k}} n^{-s} \in \mathcal{H}^{2,(N)}$ for every $N$. Moreover,
\[
\Big\Vert \textstyle \sum_{n \in \mathcal{P}_{N}} \frac{a_{n}}{n^{1/k}} n^{-s} \Big\Vert_{\mathcal{H}^{2}}
= \big\Vert \textstyle \sum_{n \in \mathcal{P}_{N}} a_{n} n^{-s} \Big\Vert_{\mathcal{H}^{2}_{k}}
= \Vert f_{N} \Vert_{H^{2} (\mathfrak{p}^{-1/k} \mathbb{D}^{N})}
\leq \Vert f \Vert_{H^{2} (\ell_{2} (\mathfrak{p}^{1/k}) \cap B_{\ell_{\infty} (\mathfrak{p}^{1/k}) })} \,.
\]
With this \cite[Corollary~13.9]{defant2018Dirichlet} gives $\sum \frac{a_{n}}{n^{1/k}} n^{-s} \in \mathcal{H}^{2}$. Then $\sum a_{n}n^{-s} \in \mathcal{H}^{2}_{k}$ and also $\Vert \sum a_{n}n^{-s} \Vert_{2,k} \leq \Vert f \Vert_{H^{2} (\ell_{2} (\mathfrak{p}^{1/k}) \cap B_{\ell_{\infty} (\mathfrak{p}^{1/k}) })}$.\\
Take now $\sum a_{n} n^{-s} \in \mathcal{H}^{2}_{k}$ and define $c_{\alpha} = a_{\mathfrak{p}^{\alpha}}$ for each $\alpha \in \mathbb{N}_{0}^{(\mathbb{N})}$. For $z \in \ell_{2} (\mathfrak{p}^{1/k}) \cap B_{\ell_{\infty} (\mathfrak{p}^{1/k}) }$ we have
\begin{equation}\label{haendel}
\sum_{\alpha \in \mathbb{N}_{0}^{(\mathbb{N})}} \vert c_{\alpha} z^{\alpha} \vert
= \sum_{\alpha \in \mathbb{N}_{0}^{(\mathbb{N})}} \frac{\vert c_{\alpha} \vert}{(\mathfrak{p}^{1/k})^{\alpha}}  \big\vert (\mathfrak{p}^{1/k})^{\alpha} z^{\alpha} \big\vert
\leq \bigg( \sum_{\alpha \in \mathbb{N}_{0}^{(\mathbb{N})}} \frac{\vert c_{\alpha} \vert^{2}}{(\mathfrak{p}^{\alpha})^{2/k}} \bigg)^{\frac{1}{2}}
\bigg( \sum_{\alpha \in \mathbb{N}_{0}^{(\mathbb{N})}} \big\vert \mathfrak{p}^{2/k} z^{2} \big\vert^{\alpha} \bigg)^{\frac{1}{2}} \,.
\end{equation}
Observe that $\big(\mathfrak{p}_{n}^{2/k} z_{n}^{2} \big)_{n} \in \ell_{1} \cap B_{c_{0}}$ and therefore the last sum in \eqref{haendel} converges (see e.g. \cite[Remark~2.18]{defant2018Dirichlet}). On the other hand, $\sum_{n=1}^{\infty} \frac{\vert a_{n} \vert^{2}}{n^{2/k}} < \infty$ (because the Dirichlet series belongs to $\mathcal{H}^{2}_{k}$) and, so, $\sum_{\alpha \in \mathbb{N}_{0}^{(\mathbb{N})}} \frac{\vert c_{\alpha} \vert^{2}}{(\mathfrak{p}^{\alpha})^{2/k}}$ also converges. This altogether shows that the power series in \eqref{haendel} converges (absolutely) and, then $f(z) = \sum_{\alpha \in \mathbb{N}_{0}^{(\mathbb{N})}}  c_{\alpha} z^{\alpha}$ defines a holomorphic function on $\ell_{2} (\mathfrak{p}^{1/k}) \cap B_{\ell_{\infty} (\mathfrak{p}^{1/k}) }$. \cite[Theorem~15.57]{defant2018Dirichlet}. \\
Consider for each $N$ the restriction of $f$ to $\mathfrak{p}^{-1/k} \mathbb{D}^{N}$ and denote it by $f_{N}$. This belongs to $H^{2}(\mathfrak{p}^{-1/k} \mathbb{D}^{N})$ (see Proposition~\ref{praetroius}) and
\[
\Vert f_{N} \Vert_{H^{2}(\mathfrak{p}^{-1/k} \mathbb{D}^{N})}
= \big\Vert \textstyle \sum_{n \in \mathcal{P}_{N}} \frac{a_{n}}{n^{-1/k}} n^{-s} \big\Vert_{\mathcal{H}_{2}}
\leq \big\Vert \textstyle \sum \frac{a_{n}}{n^{-1/k}} n^{-s} \big\Vert_{\mathcal{H}_{2}}\,.
\]
This immediately yields $f \in H^{2} (\ell_{2} (\mathfrak{p}^{1/k}) \cap B_{\ell_{\infty} (\mathfrak{p}^{1/k}) })$ with $\Vert f \Vert \leq \big\Vert \sum a_{n} n^{-s} \big\Vert_{2,k}$ and completes the proof.
\end{proof}

\begin{cor}\label{identificacion}
\[
\mathcal{H}_{+} = \bigcap_{k=1}^{\infty} H^{2} (\ell_{2} (\mathfrak{p}^{1/k}) \cap B_{\ell_{\infty} (\mathfrak{p}^{1/k}) }) \,,
\]
where in the intersection the topology of the projective limit is considered.
\end{cor}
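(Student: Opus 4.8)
The plan is to read off the corollary directly from the preceding Proposition together with the description of $\mathcal{H}_{+}$ as a projective limit. Write $R_{k} := \ell_{2} (\mathfrak{p}^{1/k}) \cap B_{\ell_{\infty} (\mathfrak{p}^{1/k}) }$. Recall from the discussion right after Theorem~\ref{teo: caract de esp} that $\mathcal{H}_{+} = \bigcap_{k} \mathcal{H}_{k}^{2}$ carries the projective limit topology associated with the continuous inclusions $\mathcal{H}_{k+1}^{2} \hookrightarrow \mathcal{H}_{k}^{2}$. The preceding Proposition provides, for each $k$, an isometric isomorphism $\iota_{k} \colon H^{2}(R_{k}) \to \mathcal{H}_{k}^{2}$ sending a holomorphic $f$ to the Dirichlet series $\sum a_{n} n^{-s}$ determined by $a_{n} = c_{\alpha}(f)$ whenever $n = \mathfrak{p}^{\alpha}$. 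So it suffices to transport the projective limit through these isomorphisms.

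The one point that needs checking is that the $\iota_{k}$ are compatible with the two systems of connecting maps. Since the weights decrease in $k$, the domains are nested, $R_{k} \subseteq R_{k+1}$, and restriction defines a map $H^{2}(R_{k+1}) \to H^{2}(R_{k})$; because the defining norm of $H^{2}(R_{k})$ is a supremum over radii $r_{j} < \mathfrak{p}_{j}^{-1/k} < \mathfrak{p}_{j}^{-1/(k+1)}$, this restriction is norm-decreasing, matching the continuity of $\mathcal{H}_{k+1}^{2} \hookrightarrow \mathcal{H}_{k}^{2}$. Moreover, the monomial coefficients of a holomorphic function are intrinsic, so $c_{\alpha}(f|_{R_{k}}) = c_{\alpha}(f)$; hence $\iota_{k}(f|_{R_{k}})$ and $\iota_{k+1}(f)$ are literally the same Dirichlet series. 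In other words the square formed by the $\iota_{k}$, the restriction maps and the inclusions commutes.

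With this compatibility in hand the conclusion is immediate: the family $(\iota_{k})_{k}$ is an isomorphism of projective spectra, hence induces an isomorphism of the projective limits
\[
\varprojlim_{k} H^{2}(R_{k}) \;\cong\; \varprojlim_{k} \mathcal{H}_{k}^{2} = \mathcal{H}_{+}
\]
as Fr\'echet spaces. Finally, because the restriction maps are injective (a holomorphic function on the connected domain $R_{k+1}$ is determined by its restriction to the open subset $R_{k}$), a coherent thread in the spectrum is nothing but a single holomorphic function on $\bigcup_{k} R_{k}$ whose restriction to each $R_{k}$ lies in $H^{2}(R_{k})$; this identifies $\varprojlim_{k} H^{2}(R_{k})$ with $\bigcap_{k} H^{2}(R_{k})$ and yields the stated equality, the seminorms $\Vert \cdot \Vert_{H^{2}(R_{k})}$ being exactly those defining the projective limit topology.

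As for where the work lies, there is no genuine obstacle here: the heavy lifting was already carried out in the Proposition. The only thing requiring care is the commutativity of the connecting diagram, i.e.\ that passing to a smaller $k$ corresponds on both sides to the same operation of shrinking the domain, and this is guaranteed by the fact that both identifications are built from the single intrinsic coefficient correspondence $a_{n} = c_{\alpha}(f)$, independent of $k$.
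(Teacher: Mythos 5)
Your proof is correct and matches the paper's treatment exactly: the corollary is stated there without proof, as an immediate consequence of the preceding proposition together with the projective-limit description of $\mathcal{H}_{+}$, which is precisely the argument you spell out. The only cosmetic point is that your $R_{k}$ is not an open subset of $R_{k+1}$ (the inclusion $\ell_{2}(\mathfrak{p}^{1/k}) \hookrightarrow \ell_{2}(\mathfrak{p}^{1/(k+1)})$ is continuous but not open, so $R_{k}$ has empty interior there), and hence injectivity of the restriction maps is better justified via the coefficient identity $c_{\alpha}(f|_{R_{k}}) = c_{\alpha}(f)$ that you already established than by appealing to the identity principle on an open subset.
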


\section{Composition operators on the space $\mathcal{H}_{+}$} \label{sect:compop}
	
If $\phi : \mathbb{C}_{\theta} \to \mathbb{C}_{\mu}$ is holomorphic and $D = \sum a_{n} n^{-s} \in \mathcal{D}$ converges on $\mathbb{C}_{\mu}$, then one can consider the composition $D \circ \phi = \sum a_{n} n^{-\phi(s)}$. This defines a 
holomorphic function on $\mathbb{C}_{\theta}$. Following the work performed in  \cite{gordon1999composition,bayart2002hardy,bonet2018frechet}, in this section we analyse those 
functions $\phi : \C_{\theta}\rightarrow\C_{\frac{1}{2}}$ (since every Dirichlet series in $\mathcal{H}_{+}$ converges at least on $\mathbb{C}_{\frac{1}{2}}$ we are led to consider functions taking values in this half-plane)	
whose associated composition operators are  bounded or continuous. We begin by looking at the case $\mathcal{H}_{+} \to \mathcal{D}$, that goes essentially along the same lines as \cite[Theorem~A]{gordon1999composition}.  

\begin{teo} \label{juliososa}
A function $\phi:\C_{\theta}\rightarrow\C_{\frac{1}{2}}$ defines a composition operator $C_{\phi}:\mathcal{H}_{+}\rightarrow\mathcal{D}$ if and only if it is an analytic function on some half-plane $\mathbb{C}_{\mu}$ and there it has the form
\begin{equation} \label{fi}
		\phi(s)=c_{0}s+\varphi(s), \text{ where } c_{0}\in\mathbb{N}\cup\{0\} \text{ and } \varphi\in\mathcal{D} \,.
\end{equation} 
\end{teo}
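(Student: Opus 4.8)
The plan is to follow the Gordon--Hedenmalm strategy in \cite{gordon1999composition}, proving both implications.

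For the \emph{sufficiency}, suppose $\phi$ has the form \eqref{fi} on some half-plane $\mathbb{C}_{\mu}$, with $c_{0} \in \mathbb{N} \cup \{0\}$ and $\varphi \in \mathcal{D}$. I must show that for every $D = \sum a_{n} n^{-s} \in \mathcal{H}_{+}$ the composition $D \circ \phi$ is again a convergent Dirichlet series, i.e. lies in $\mathcal{D}$. The key observation is that $n^{-\phi(s)} = n^{-c_{0}s} n^{-\varphi(s)} = (n^{c_{0}})^{-s} e^{-\varphi(s)\log n}$. Since $\varphi \in \mathcal{D}$ converges on some half-plane and there $e^{-\varphi(s)\log n}$ can be expanded as a Dirichlet series (using that $\exp$ of a Dirichlet series with positive abscissa of convergence is again a Dirichlet series, via the ordinary Bohr/Euler-product type manipulations), each term $n^{-\phi(s)}$ is a Dirichlet series supported on multiples/powers governed by $n^{c_{0}}$. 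The integer condition $c_{0} \in \mathbb{N}_{0}$ is exactly what guarantees that $(n^{c_{0}})^{-s}$ is an honest Dirichlet monomial $m^{-s}$ with $m = n^{c_{0}} \in \mathbb{N}$, so no non-integer frequencies appear. I would then argue that the resulting double series converges absolutely on a suitable half-plane; here I would invoke the bound $\sigma_{a}(D) \leq \tfrac{1}{2}$ for $D \in \mathcal{H}_{+}$ established earlier, together with the fact that $\phi$ maps into $\mathbb{C}_{1/2}$, so that $\re \phi(s)$ stays bounded below by something exceeding $\sigma_{a}(D)$ on a half-plane, yielding convergence of $\sum a_{n} n^{-\phi(s)}$.

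For the \emph{necessity}, assume $C_{\phi} : \mathcal{H}_{+} \to \mathcal{D}$ is well defined. Applying $C_{\phi}$ to the specific monomials $e_{n}(s) = n^{-s} \in \mathcal{H}_{+}$ shows that $n^{-\phi(s)} \in \mathcal{D}$ for every $n$; in particular $2^{-\phi(s)}$ and $3^{-\phi(s)}$ are convergent Dirichlet series, hence analytic and (as Dirichlet series) almost periodic on half-planes. Taking $n = 2$, the function $2^{-\phi(s)} = e^{-\phi(s)\log 2}$ being a Dirichlet series forces $\phi$ to be analytic on a half-plane $\mathbb{C}_{\mu}$. Writing the general Dirichlet series $m^{-s}$ whose exponentials are $e^{-s\log m}$, the requirement that $n^{-\phi(s)}$ have a Dirichlet expansion with frequencies in $\{\log m : m \in \mathbb{N}\}$ constrains the linear growth of $\phi$. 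Concretely, I would extract the leading behaviour $\phi(s) = c_{0} s + \varphi(s)$ by examining the frequency $\log n$ coming from the leading term: the smallest exponential frequency of $n^{-\phi(s)}$ must be an integer multiple of $\log n$, which pins $c_{0}$ down to a non-negative integer, while the remainder $\varphi = \phi - c_{0}s$ must itself be (or generate) a Dirichlet series in $\mathcal{D}$.

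The main obstacle I anticipate is the necessity direction, specifically showing that $c_{0}$ must be a \emph{non-negative integer} rather than merely a non-negative real, and that the remainder $\varphi$ lies in $\mathcal{D}$. The integrality is the delicate arithmetic heart of the Gordon--Hedenmalm argument: it comes from the fact that $n^{-\phi(s)}$ must be expressible with frequencies $\log m$, $m \in \mathbb{N}$, and the asymptotic analysis of $\phi(s)$ as $\re s \to \infty$ forces the leading coefficient to match $\log(n^{c_{0}}) = c_{0} \log n$ with $n^{c_{0}} \in \mathbb{N}$. I would handle this by studying the limit $\lim_{\re s \to \infty} \phi(s)/s$ (using almost periodicity and the Bohr correspondence to control the behaviour at infinity), showing this limit equals $c_{0}$, and then using the Dirichlet-series structure of $2^{-\phi(s)}$ to force $c_{0} \in \mathbb{N}_{0}$; the non-negativity follows from the mapping property $\phi(\mathbb{C}_{\theta}) \subseteq \mathbb{C}_{1/2}$, which prevents $\re \phi$ from decaying. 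Once $c_{0}$ is fixed, subtracting $c_{0}s$ and checking that the leftover $\varphi$ converges as a Dirichlet series completes the argument.
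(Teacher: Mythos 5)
Your necessity direction has a genuine gap. You propose to re-derive the integrality of $c_{0}$ and the membership $\varphi\in\mathcal{D}$ by a frequency analysis of $n^{-\phi(s)}$ and a study of $\lim_{\re s\to\infty}\phi(s)/s$; this is precisely the content of Theorem~A of \cite{gordon1999composition}, it is the hard arithmetic core of that paper, and your sketch (``the smallest exponential frequency of $n^{-\phi(s)}$ must be an integer multiple of $\log n$'') does not yet constitute a proof --- you would have to carry out the whole almost-periodicity argument. The point you are missing is that none of this needs to be redone: since $\mathcal{H}^{2}\subseteq\mathcal{H}_{+}$, any $\phi$ defining a composition operator $\mathcal{H}_{+}\to\mathcal{D}$ in particular defines one $\mathcal{H}^{2}\to\mathcal{D}$, and Theorem~A of \cite{gordon1999composition} then gives the form \eqref{fi} at once (analyticity on a half-plane comes from looking at $2^{-\phi}$ and $3^{-\phi}$, as in \cite[Lemma~2.1]{bayart2019composition}, much as you indicate). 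This reduction collapses your ``main obstacle'' to a citation.

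For sufficiency, your direct expansion-and-rearrangement argument can be made to work, since every $D\in\mathcal{H}_{+}$ has $\sigma_{a}(D)\leq\frac{1}{2}$, but two points need attention: (i) $\sigma_{a}(D)\leq\frac{1}{2}$ only yields absolute convergence where $\re\phi(s)>\frac{1}{2}+\delta$ for some $\delta>0$, and producing such a $\delta$ on a smaller half-plane requires \cite[Proposition~4.2]{gordon1999composition} rather than just the inclusion $\phi(\mathbb{C}_{\theta})\subseteq\mathbb{C}_{\frac{1}{2}}$; (ii) rearranging the resulting absolutely convergent double series into a genuine Dirichlet series is again exactly the sufficiency half of Theorem~A, which you would be reproving. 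The paper instead writes $D\circ\phi = D_{1/\varepsilon}\circ\phi_{\varepsilon}$ with $\phi_{\varepsilon}(s)=\phi(s)-\varepsilon$, observes that $D_{1/\varepsilon}\in\mathcal{H}^{2}$ (by definition of $\mathcal{H}_{+}$) and that $\phi_{\varepsilon}$ is still of the form \eqref{fi}, and concludes from Theorem~A that $D_{1/\varepsilon}\circ\phi_{\varepsilon}\in\mathcal{D}$. I recommend restructuring both directions around the inclusion $\mathcal{H}^{2}\subseteq\mathcal{H}_{+}$ and this translation identity.
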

\begin{proof}
If $C_{\phi}$ is a composition operator, then $2^{-\phi(s)}$ and $3^{-\phi(s)}$ are holomorphic and nowhere-vanishing functions on some half-plane  $\mathbb{C}_{\mu}$ with $\mu\geq\theta$, so with the same proof given in \cite[Lemma 2.1]{bayart2019composition} we have that  $\phi$ is an analytic function on $\mathbb{C}_{\mu}$. On the other hand, the fact that $\mathcal{H}^{2}\subseteq\mathcal{H}_{+}$  and \cite[Theorem~A]{gordon1999composition} immediately give that $\phi :\mathbb{C}_{\mu} \to \mathbb{C}_{\frac{1}{2}}$ has to be as in \eqref{fi}.\\
Suppose now that $\phi(s)=c_{0}s+\varphi(s)$ with $c_{0} \in \mathbb{N}\cup\{0\}$ and $\varphi \in \mathcal{D}$. Then $\phi$ generates a composition operator on $\mathcal{H}^{2}$. For each fixed $\varepsilon>0 $ we define  $\phi_{\varepsilon}(s)=c_{0}s+\varphi(s)-\varepsilon$, which also defines a composition operator on $\mathcal{H}^{2}$. Given a Dirichlet series $D=\sum{a_{n}n^{-s}}$ in $\mathcal{H}_{+}$  we have 
\[
		D \circ \phi (s) = \sum{a_{n} n^{-\phi(s)}} = \sum{\frac{a_{n}}{ n^{\varepsilon}} n^{-(\phi(s)-\varepsilon)}}= D_{1/\varepsilon} \circ \phi_{\varepsilon}(s) \in \mathcal{D}
\] 
\end{proof}
	
We move on now to characterise the continuous composition operators on $\mathcal H_{+}$, following the spirit of \cite[Theorem~B]{gordon1999composition}.
	
\begin{teo}\label{teo: op cont}
Let $\phi$ be as in \eqref{fi}. Then $C_{\phi}:\mathcal{H}_{+}\rightarrow\mathcal{H}_{+}$ defines a continuous composition operator if and only if $\phi$ has an analytic extension to $\mathbb{C}_{+}$, such that
\begin{enumerate}
	\item $\phi (\mathbb{C}_{+}) \subseteq \mathbb{C}_{+}$ if $c_{0}\in\mathbb{N}$,
	\item $\phi (\mathbb{C}_{+}) \subseteq \mathbb{C}_{\frac{1}{2}}$ if $c_{0}=0$.
\end{enumerate}
\end{teo}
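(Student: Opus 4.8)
The plan is to trivialise the Fréchet structure by reducing every estimate back to the Hilbert space $\mathcal{H}^{2}$ and the Gordon--Hedenmalm theorem, exploiting the translation identity that underlies the whole theory. Since $\mathcal{H}_{+}$ is the projective limit of the Banach spaces $\mathcal{H}^{2}_{k}$, the operator $C_{\phi}$ is continuous if and only if for every $k$ there are $j=j(k)$ and $C>0$ with $\Vert C_{\phi}D\Vert_{2,k}\le C\Vert D\Vert_{2,j}$ for all $D\in\mathcal{H}_{+}$. The first computation I would run is the key bookkeeping: writing $\phi(s+\tfrac1k)=\tilde\Phi(s)+\tfrac1j$ with $\tilde\Phi(s)=\phi(s+\tfrac1k)-\tfrac1j$, one checks term by term that $(C_{\phi}D)_{k}=C_{\tilde\Phi}(D_{j})$, so that $\Vert C_{\phi}D\Vert_{2,k}=\Vert C_{\tilde\Phi}(D_{j})\Vert_{\mathcal{H}^{2}}\le\Vert C_{\tilde\Phi}\Vert\,\Vert D\Vert_{2,j}$ as soon as $C_{\tilde\Phi}$ is a bounded composition operator $\mathcal{H}^{2}\to\mathcal{H}^{2}$; crucially $D_{j}\in\mathcal{H}^{2}$ because $D\in\mathcal{H}_{+}$. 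Note that $\tilde\Phi=c_{0}s+\tilde\varphi$ with $\tilde\varphi\in\mathcal{D}$, so it has the admissible shape. Thus the whole statement becomes a matter of feeding the right translates of $\phi$ into Gordon--Hedenmalm's characterisation of bounded composition operators on $\mathcal{H}^{2}$.

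For \emph{necessity} I would argue as follows. A continuous $C_{\phi}$ is in particular well defined, and since $\mathcal{H}^{2}\subseteq\mathcal{H}_{+}$, for each $D\in\mathcal{H}^{2}$ and each $\varepsilon>0$ the translate $(C_{\phi}D)_{1/\varepsilon}=D\circ\phi_{\varepsilon}$, with $\phi_{\varepsilon}=\phi(\cdot+\varepsilon)$, lies in $\mathcal{H}^{2}$. Hence $C_{\phi_{\varepsilon}}$ maps $\mathcal{H}^{2}$ into itself; by the closed graph theorem it is bounded, and Gordon--Hedenmalm then forces $\phi_{\varepsilon}$ to extend analytically to $\mathbb{C}_{+}$ with $\phi_{\varepsilon}(\mathbb{C}_{+})\subseteq\mathbb{C}_{+}$ when $c_{0}\in\mathbb{N}$ and $\phi_{\varepsilon}(\mathbb{C}_{+})\subseteq\mathbb{C}_{1/2}$ when $c_{0}=0$. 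Since $\phi_{\varepsilon}(\mathbb{C}_{+})=\phi(\mathbb{C}_{\varepsilon})$, letting $\varepsilon\to0^{+}$ and using $\mathbb{C}_{+}=\bigcup_{\varepsilon>0}\mathbb{C}_{\varepsilon}$ gives simultaneously the analytic extension to $\mathbb{C}_{+}$ and the two mapping conditions.

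For \emph{sufficiency} I must produce, for each $k$, an index $j$ making $\tilde\Phi(\mathbb{C}_{+})=\phi(\mathbb{C}_{1/k})-\tfrac1j$ land in the correct half-plane. This amounts to showing that $\phi$ sends $\mathbb{C}_{1/k}$ \emph{strictly} inside its target: $\inf_{\mathbb{C}_{1/k}}\re\phi>0$ if $c_{0}\in\mathbb{N}$, and $\inf_{\mathbb{C}_{1/k}}\re\phi>\tfrac12$ if $c_{0}=0$; given such a strict gap I just pick $j$ so large that $\tfrac1j$ (resp.\ $\tfrac12+\tfrac1j$) fits beneath it, apply Gordon--Hedenmalm to $\tilde\Phi$, and conclude. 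The case $c_{0}\in\mathbb{N}$ I expect to be painless: $\re\phi$ is a positive harmonic function on $\mathbb{C}_{+}$ whose growth coefficient at $+\infty$ along the real axis is $\lim_{x\to\infty}\re\phi(x)/x=c_{0}$ (because $\varphi(x)\to b_{1}$), so its Herglotz--Nevanlinna representation gives $\re\phi(s)\ge c_{0}\re s$; on $\mathbb{C}_{1/k}$ this reads $\re\phi>c_{0}/k\ge 1/k$, and $j=k$ already works.

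The case $c_{0}=0$ is the genuine obstacle, since there $\re\varphi$ may approach its infimum $\tfrac12$ (already $\re\varphi(\sigma)\to\re b_{1}\ge\tfrac12$ along the real axis), and an everywhere-positive almost periodic function can fail to have a gap above its infimum; so no cheap bound is available and the strict inequality must be extracted from the Dirichlet structure. My plan is to argue by contradiction: if $\inf_{\re s\ge1/k}\re\varphi=\tfrac12$, then, using that $\re\varphi\to\re b_{1}$ uniformly as $\re s\to\infty$, a minimising sequence $s_{n}=\sigma_{n}+it_{n}$ has $\sigma_{n}$ trapped in a compact interval $[1/k,R]$, so $\sigma_{n}\to\sigma_{\ast}$ along a subsequence. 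Invoking that an admissible Dirichlet symbol converges uniformly on every $\mathbb{C}_{\delta}$ (hence is bounded there, its vertical translates form a normal family, and their Bohr limits have the form $\sum_{n}b_{n}\chi(n)n^{-s}$ for a character $\chi$), I pass to a locally uniform limit $\Psi$ of $\varphi(\cdot+it_{n})$. Then $\re\Psi\ge\tfrac12$ on $\mathbb{C}_{+}$ while $\re\Psi(\sigma_{\ast})=\tfrac12$ at an \emph{interior} point, so the minimum principle forces $\re\Psi\equiv\tfrac12$, i.e.\ $\Psi$ is constant; but a constant Bohr limit $\sum_{n}b_{n}\chi(n)n^{-s}$ must equal $b_{1}$, forcing $\varphi\equiv b_{1}$ with $\re b_{1}=\tfrac12$, contradicting $\varphi(\mathbb{C}_{+})\subseteq\mathbb{C}_{1/2}$. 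This yields the required gap and closes sufficiency. The two technical points I will have to pin down carefully are the uniform convergence (hence boundedness and almost periodicity) of the admissible symbol on the half-planes $\mathbb{C}_{\delta}$, which is what makes Montel's theorem and Bohr's vertical-limit theorem applicable, and the verification that each $\tilde\Phi$ genuinely belongs to the Gordon--Hedenmalm class.
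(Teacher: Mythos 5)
Your proposal is correct and, for the overall architecture, coincides with the paper's proof: the identity $(C_{\phi}D)_{k}=C_{\tilde\Phi}(D_{j})$ with $\tilde\Phi(s)=\phi(s+\tfrac1k)-\tfrac1j$ is exactly the paper's \eqref{franck} with $\delta=\tfrac1j$, the necessity argument (translate, restrict to $\mathcal{H}^{2}\subseteq\mathcal{H}_{+}$, apply Gordon--Hedenmalm to each $\phi(\cdot+\varepsilon)$, let $\varepsilon\to0$) is the paper's, and sufficiency is reduced in both cases to producing, for each $k$, a strict gap $\inf_{\mathbb{C}_{1/k}}\re\phi>\sigma$ with $\sigma=0$ or $\tfrac12$. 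The one genuine divergence is how that gap is obtained: the paper simply invokes \cite[Proposition~4.2]{gordon1999composition} (recorded as Remark~\ref{gordon4.2}), whereas you re-derive it --- via the Julia/Herglotz inequality $\re\phi(s)\ge c_{0}\re s$ when $c_{0}\ge1$ (clean and correct), and via a vertical-limit/minimum-principle argument when $c_{0}=0$. Your self-contained route buys independence from that external proposition, at the cost of two points you rightly flag but which deserve a warning. First, the claim that the symbol ``converges uniformly on every $\mathbb{C}_{\delta}$'' is not a free hypothesis and is essentially of the same depth as the proposition you are trying to reprove; what you actually need, and what is cheaply available, is local boundedness of $\varphi$ on closed substrips $\{\delta\le\re s\le R\}$, which follows from Harnack's inequality applied to the positive harmonic function $\re\varphi-\tfrac12$ (or from the Schwarz--Pick contraction of $\varphi:\mathbb{C}_{+}\to\mathbb{C}_{\frac12}$), and this already suffices for Montel; the identification of the limit as $\varphi_{\chi}$ should then be performed on the half-plane of uniform convergence of $\varphi$ and propagated by analytic continuation. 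Second, trapping the minimising sequence in a vertical strip presupposes $\re b_{1}>\tfrac12$; this does hold for non-constant $\varphi$ (the mean value over vertical lines of the almost periodic function $\re\varphi(\sigma+it)-\tfrac12$ equals $\re b_{1}-\tfrac12$, and a non-negative almost periodic function with zero mean vanishes identically), but that is an extra lemma you should state, with the constant case disposed of separately. With these two points pinned down your argument closes; alternatively, citing Gordon--Hedenmalm's Proposition~4.2 at that juncture collapses the $c_{0}=0$ case to two lines, which is what the paper does.
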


Let us make a short comment before we proceed to the proof. Suppose  $\phi : \mathbb{C}_{+} \to \mathbb{C}$ is a holomorphic mapping as in \eqref{fi} that defines a composition operator $C_{\phi} : \mathcal{H}_{+} \to \mathcal{D}$. Given $k \in \mathbb{N}$ and $\delta >0$ we define a function $\phi_{k,\delta} : \mathbb{C}_{-\frac{1}{k}} \to \mathbb{C}$ by
\begin{equation} \label{fiks}
	\phi_{k,\delta} (s) = \phi \big( s + \tfrac{1}{k} \big) - \delta \,.
\end{equation}
This is obviously holomorphic, but even more
\begin{equation} \label{1019}
	\phi_{k,\delta} (s) = c_{0} \big( s + \tfrac{1}{k} \big) + \varphi \big( s + \tfrac{1}{k} \big) - \delta
	= c_{0}s + \varphi_{k,\delta} (s) \,,
\end{equation}
where 
\[
	\varphi_{k,\delta} (s) =  \frac{c_{0}}{k}  -  \delta  + \varphi_{k}(s) 
	=  \frac{c_{0}}{k}  -  \delta  + c_{1} + \sum_{n=2}^{\infty} \frac{a_{n}}{n^{1/k}} n^{-s}
\]
is a Dirichlet series that converges at least in the same half-plane as $\varphi$ (and that, as $\varphi$, has a holomorphic extension to $\mathbb{C}_{+}$). Then, by Theorem~\ref{juliososa}, $\phi_{k,\delta}$ defines a composition operator $C_{\phi_{k,\delta}} : \mathcal{H}_{+} \to \mathcal{D}$. Let us note that, if $D = \sum a_{n} n^{-s} \in \mathcal{H}_{+}$, then $D \circ \phi \in \mathcal{D}$ and, whenever this converges, for $k \in \mathbb{N}$ we have (recall \eqref{charpentier})
\[
	\big( D \circ \phi \big)_{k} (s) =  D \circ \phi  \big( s + \tfrac{1}{k} \big)
	= \sum a_{n} n^{-\phi  ( s + 1/k )} = \sum \frac{a_{n}}{n^{\delta}} n^{-\phi  ( s + 1/k ) + \delta}
	= D_{1/\delta} \circ \phi_{k,\delta} (s) \,,
\]
that is (note that $D_{1/\delta}$ again belongs to $\mathcal{H}_{+}$)
\begin{equation} \label{franck}
	\big( D \circ \phi \big)_{k} = C_{\phi_{k,\delta}} (D_{1/\delta} ) \,.
\end{equation}
	
\begin{proof}[proof of Theorem~\ref{teo: op cont}]
Take $\phi$ such that $C_{\phi}:\mathcal{H}_{+}\rightarrow\mathcal{H}_{+}$ is continuous. For each fixed $k$ we consider $\phi_{k,0}$ as in \eqref{fiks} and observe that it defines a 
continuous composition operator $\mathcal{H}^{2} \to \mathcal{H}^{2}$ and (as in \eqref{franck}) $C_{\phi_{k,0}} (D) = \big( D \circ \phi \big)_{k}$ for every $D \in \mathcal{H}^{2}$. Suppose that
$c_{0} =0$; then by  \cite{gordon1999composition} $\phi_{k} (\mathbb{C}_{+}) \subseteq \mathbb{C}_{\frac{1}{2}}$ which yields $\phi (\mathbb{C}_{\frac{1}{k}}) \subseteq 
\mathbb{C}_{\frac{1}{2}}$. Since this holds for every $k$ the conclusion follows. The same argument gives also the conclusion for  $c_{0} \neq 0$. This completes the proof of necessity.\\
To prove sufficiency, let us note in first place that if $\phi$ is such that for every $k$ there is $\delta >0$ so that the function in \eqref{fiks} defines a continuous composition operator $C_{\phi_{k,\delta}} : \mathcal{H}^{2} \to \mathcal{H}^{2}$, then we just have to choose $m$ with $0 < \frac{1}{m} < \delta$ and take \eqref{franck} into account to get
\[ 
\Vert C_{\phi} (D) \Vert_{2,k} = \Vert ( D \circ \phi)_{k} \Vert_{2} 
\leq \Vert C_{\phi_{k,\delta}} \Vert \, \Vert D_{1/\delta} \Vert_{2}
\leq \Vert C_{\phi_{k,\delta}} \Vert \, \Vert D \Vert_{2,m} 
\]
for every $D \in \mathcal{H}_{+}$; and this shows that $C_{\phi} : \mathcal{H}_{+} \to \mathcal{H}_{+}$ is continuous. It is then enough, then to show that if $\phi$ satisfies any of the conditions in Theorem~\ref{teo: op cont}, then for each $k$ we can find $\delta >0$ so that the operator $C_{\phi_{k,\delta}} : \mathcal{H}^{2} \to \mathcal{H}^{2}$ is continuous. We consider different cases. 
First of all, if $c_{0} =0$, then $\phi = \varphi$ and, by hypothesis $\varphi (\mathbb{C}_{+}) = \phi (\mathbb{C}_{+})  \subseteq \mathbb{C}_{\frac{1}{2}}$. Given $k \in \mathbb{N}$, by \cite[Proposition~4.2]{gordon1999composition} (see Remark~\ref{gordon4.2}) we can find $\delta >0$ so that $\varphi (\mathbb{C}_{\frac{1}{k}}) \subseteq \mathbb{C}_{\frac{1}{2}+\delta}$. Taking this $\delta$ we define $\phi_{k,\delta}$ as in \eqref{fiks}. A simple computation shows that $\phi_{k,\delta} (\mathbb{C}_{+}) \subseteq \mathbb{C}_{\frac{1}{2}}$ and \cite[Theorem~B]{gordon1999composition} gives that $C_{\phi_{k,\delta}} : \mathcal{H}^{2} \to \mathcal{H}^{2}$ is continuous. This completes the proof in this case.\\
Suppose now that $c_{0} \neq 0$ and $\varphi \neq 0$. In this case $\phi (\mathbb{C}_{+})  \subseteq \mathbb{C}_{+}$ and, by \cite[Proposition~4.2]{gordon1999composition}, given $k \in \mathbb{N}$ we can find $\delta >0$ so that $\varphi (\mathbb{C}_{\frac{1}{k}}) \subseteq \mathbb{C}_{\delta}$. Then $\phi_{k,\delta} (\mathbb{C}_{+}) \subseteq \mathbb{C}_{+}$ and again \cite[Theorem~B]{gordon1999composition} yields that $C_{\phi_{k,\delta}} : \mathcal{H}^{2} \to \mathcal{H}^{2}$ is continuous, giving the result.\\
The remaining case ($c_{0} \neq 0$ and $\varphi = 0$) follows by a direct computation. Note that in this case $\phi (s) = c_{0}s$ and, for each fixed $k$, we have
\[
	\Vert C_{\phi} (D) \Vert_{2,k} 
	= \Big\Vert \sum a_{n} n^{- c_{0}s}\Big\Vert_{2,k}
	= \Big\Vert \sum \tfrac{a_{n}}{n^{c_{0}/k}} \big( n^{c_{0}} \big)^{-s}\Big\Vert_{2}
	= \sum_{n=1}^{\infty} \frac{\vert a_{n} \vert ^{2} }{n^{2c_{0}/k}}
	\leq \sum_{n=1}^{\infty} \frac{\vert a_{n} \vert ^{2} }{n^{2/k}} 
	= \Vert D \Vert_{2,k}
\]
for every $D=\sum a_{n} n^{-s} \in \mathcal{H}_{+}$.
\end{proof}

An operator $T:E \to F$ between locally convex spaces is said to be bounded if there is a $0$-neighbourhood $U$ in $E$ for which $T(U)$ is bounded (in $F$).

\begin{obs} \label{convergencia}
Suppose that $(D^{N})_{N}$ is a sequence of Dirichlet series (say $D^{N} = \sum a_{n}^{(N)} n^{-s}$) in $\mathcal{H}^{2}$ that converges to some $D =\sum a_{n} n^{-s} \in \mathcal{H}^{2}$. Then for each $s \in \mathbb{C}_{\frac{1}{2}}$ we have
\[
		\vert D^{N} (s) - D(s) \vert \leq \sum_{n=1}^{\infty} \vert a_{n}^{(N)} - a_{n} \vert \frac{1}{n^{\re s}}
		\leq \bigg(  \sum_{n=1}^{\infty} \vert a_{n}^{(N)} - a_{n} \vert^{2} \bigg)^{\frac{1}{2}} 
		\bigg(  \sum_{n=1}^{\infty} \frac{1}{n^{2 \re s}}\bigg)^{\frac{1}{2}} \,,
\]
and  $D^{N} (s) \to D(s)$ as $N \to \infty$.
\end{obs}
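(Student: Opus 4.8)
The plan is to read everything off the identification, recalled in the excerpt, that $\mathcal{H}^{2}$ is exactly the space of Dirichlet series $\sum a_{n} n^{-s}$ with $(a_{n})_{n} \in \ell_{2}$, equipped with $\big\Vert \sum a_{n} n^{-s} \big\Vert_{\mathcal{H}^{2}} = \big( \sum_{n} \vert a_{n} \vert^{2} \big)^{1/2}$. Under this isometry the hypothesis that $D^{N} \to D$ in $\mathcal{H}^{2}$ is simply the assertion that $\sum_{n} \vert a_{n}^{(N)} - a_{n} \vert^{2} \to 0$ as $N \to \infty$, so the whole task reduces to controlling a single Dirichlet series, namely the difference $\sum_{n} (a_{n}^{(N)} - a_{n}) n^{-s}$.

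First I would fix $s \in \mathbb{C}_{\frac{1}{2}}$, so that $\re s > \frac{1}{2}$ and hence $\sum_{n} n^{-2\re s} = \zeta(2\re s) < \infty$. Since $(a_{n}^{(N)} - a_{n})_{n} \in \ell_{2}$, the Cauchy--Schwarz inequality shows that $\sum_{n} \vert a_{n}^{(N)} - a_{n} \vert \, n^{-\re s}$ converges; in particular the series for $D^{N}$ and for $D$ both converge absolutely at $s$, so that $D^{N}(s) - D(s) = \sum_{n} (a_{n}^{(N)} - a_{n}) n^{-s}$. Applying the triangle inequality and then Cauchy--Schwarz to this last series produces precisely the displayed estimate
\[
\vert D^{N}(s) - D(s) \vert \leq \sum_{n} \vert a_{n}^{(N)} - a_{n} \vert \, n^{-\re s} \leq \Big( \sum_{n} \vert a_{n}^{(N)} - a_{n} \vert^{2} \Big)^{1/2} \Big( \sum_{n} n^{-2\re s} \Big)^{1/2}.
\]

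To finish I would note that the first factor on the right is exactly $\Vert D^{N} - D \Vert_{\mathcal{H}^{2}}$, which tends to $0$ by hypothesis, while the second factor equals $\zeta(2\re s)^{1/2}$, a finite constant independent of $N$. Hence the right-hand side tends to $0$ and $D^{N}(s) \to D(s)$, as claimed. There is essentially no genuine obstacle: the only point meriting a word of care is that an abstract element of the completion $\mathcal{H}^{2}$ really is represented pointwise on $\mathbb{C}_{\frac{1}{2}}$ by its Dirichlet series, but this is immediate from the identification $\mathcal{H}^{2} \cong \ell_{2}$ together with the absolute convergence just established, so no extra work is needed.
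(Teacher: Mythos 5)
Your argument is correct and is essentially the same as the paper's: the remark is proved there by exactly this chain of the triangle inequality followed by Cauchy--Schwarz, using that $\Vert D^{N}-D\Vert_{\mathcal{H}^{2}}=\big(\sum_{n}\vert a_{n}^{(N)}-a_{n}\vert^{2}\big)^{1/2}\to 0$ and that $\sum_{n}n^{-2\re s}$ is finite for $\re s>\tfrac12$. No differences worth noting.
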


\begin{teo}
Let $\phi$, defined as in \eqref{fi}, be such that the composition operator $C_{\phi}:\mathcal{H}_{+}\rightarrow\mathcal{H}_{+}$ is continuous. Then $C_{\phi}$ is a bounded composition operator in $\mathcal{H}_{+}$ if and only if there exists $\varepsilon >0$ such that
\begin{enumerate}
		\item $\phi(\C_{+})\subseteq\C_{\varepsilon}$  if $c_{0}\in\mathbb{N}$,
		\item $\phi(\C_{+})\subseteq\C_{\frac{1}{2}+\varepsilon}$  if $c_{0}=0$.
\end{enumerate}
\end{teo}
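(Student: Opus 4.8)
The plan is to recast boundedness of $C_\phi$ as a single \emph{uniform} estimate and then transfer it to the Banach-space level through \eqref{franck}, where the Gordon--Hedenmalm characterisation on $\mathcal{H}^2$ applies. First I would unwind the definition of a bounded operator in this Fr\'echet setting. Since $\{\Vert\cdot\Vert_{2,k}\}_k$ is an increasing fundamental system of seminorms, the sets $\{D:\Vert D\Vert_{2,m}\le\rho\}$ form a basis of $0$-neighbourhoods and a subset of $\mathcal{H}_+$ is bounded precisely when it is bounded for every $\Vert\cdot\Vert_{2,k}$. Hence $C_\phi$ is bounded if and only if there is \emph{one} index $m$, independent of $k$, such that for each $k$ there is a constant $M_k$ with $\Vert C_\phi D\Vert_{2,k}\le M_k\Vert D\Vert_{2,m}$ for all $D\in\mathcal{H}_+$. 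The whole point is the uniformity of $m$: in the continuity theorem the dominating index was allowed to depend on $k$, so only the half-plane conditions of Theorem~\ref{teo: op cont} emerged, whereas here the single $m$ will force a uniform gap $\varepsilon$.

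Next I would translate this estimate to $\mathcal{H}^2$. Taking $\delta=\tfrac1m$ in \eqref{franck} gives $(C_\phi D)_k=C_{\phi_{k,1/m}}(D_m)$, where $D_m=D_{1/\delta}$ satisfies $\Vert D_m\Vert_{\mathcal{H}^2}=\Vert D\Vert_{2,m}$, and by \eqref{1019} the symbol $\phi_{k,1/m}(s)=\phi(s+\tfrac1k)-\tfrac1m=c_0 s+\varphi_{k,1/m}(s)$ is again of the form \eqref{fi}. Since $D\mapsto D_m$ is an isometry from $(\mathcal{H}_+,\Vert\cdot\Vert_{2,m})$ into $\mathcal{H}^2$ that carries Dirichlet polynomials onto Dirichlet polynomials, and the latter are dense in $\mathcal{H}^2$, the estimate $\Vert C_\phi D\Vert_{2,k}\le M_k\Vert D\Vert_{2,m}$ for all $D$ is equivalent to $C_{\phi_{k,1/m}}$ being a bounded operator $\mathcal{H}^2\to\mathcal{H}^2$. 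Consequently $C_\phi$ is bounded on $\mathcal{H}_+$ if and only if there is a single $m$ for which $C_{\phi_{k,1/m}}:\mathcal{H}^2\to\mathcal{H}^2$ is bounded for every $k$.

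Finally I would invoke \cite[Theorem~B]{gordon1999composition} in both directions. It says that $C_{\phi_{k,1/m}}$ is bounded on $\mathcal{H}^2$ exactly when $\phi_{k,1/m}(\mathbb{C}_+)\subseteq\mathbb{C}_{1/2}$ if $c_0=0$, and exactly when $\phi_{k,1/m}(\mathbb{C}_+)\subseteq\mathbb{C}_+$ if $c_0\in\mathbb{N}$. Rewriting $\phi_{k,1/m}(\mathbb{C}_+)=\phi(\mathbb{C}_{1/k})-\tfrac1m$, these become $\phi(\mathbb{C}_{1/k})\subseteq\mathbb{C}_{1/2+1/m}$ and $\phi(\mathbb{C}_{1/k})\subseteq\mathbb{C}_{1/m}$, respectively. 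Since $\bigcup_k\mathbb{C}_{1/k}=\mathbb{C}_+$, requiring either inclusion for every $k$ is the same as requiring it on all of $\mathbb{C}_+$; taking $\varepsilon=\tfrac1m$ in one direction and $m=\lceil 1/\varepsilon\rceil$ in the other then yields precisely the two stated conditions.

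The step I expect to carry the real content is recognising that boundedness, unlike continuity, pins down a single dominating index $m$ and hence a single shift $\delta=\tfrac1m$; the elementary but decisive observation $\bigcup_k\mathbb{C}_{1/k}=\mathbb{C}_+$ is what upgrades the family of conditions on the shrinking half-planes $\mathbb{C}_{1/k}$ into one uniform inclusion on $\mathbb{C}_+$, producing the gap $\varepsilon$. Care is needed only in justifying that the uniform estimate is genuinely equivalent to $\mathcal{H}^2$-boundedness of each $C_{\phi_{k,1/m}}$ (via the density of Dirichlet polynomials and the isometry $D\mapsto D_m$), and in using the continuity hypothesis, through Theorem~\ref{teo: op cont}, to guarantee that $\phi$ extends analytically to $\mathbb{C}_+$ so that all the sets $\phi(\mathbb{C}_{1/k})$ are well defined.
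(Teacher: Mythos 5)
Your proposal is correct and follows essentially the same route as the paper: unwind boundedness into a single dominating index $m$, transfer the resulting uniform estimate to $\mathcal{H}^{2}$ via \eqref{franck} and the density of Dirichlet polynomials, apply \cite[Theorem~B]{gordon1999composition} to each $\phi_{k,1/m}$, and recombine the half-planes $\mathbb{C}_{1/k}$ into one inclusion on $\mathbb{C}_{+}$. The one step you compress --- that the density extension of $C_{\phi_{k,1/m}}$ from the Dirichlet polynomials actually coincides with the composition operator on all of $\mathcal{H}^{2}$ --- is precisely where the paper spends most of its necessity argument (pointwise evaluation on $\mathbb{C}_{1}$ together with the bound \eqref{brl}, which in turn uses the continuity hypothesis through Theorem~\ref{teo: op cont}), but you flag it explicitly and all the needed ingredients are in place.
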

\begin{proof}
Let us begin by assuming that $c_0\in\mathbb{N}$ and $\phi(\C_{+})\subseteq\C_{\varepsilon}$ for some $\varepsilon>0$. Take $0 < \frac{1}{m}< \varepsilon$ and consider the following neighbourhood of $0$ in $\mathcal{H}_{+}$, 
\[
		U_{m}= \{ D\in\mathcal{H}_{+} \, \colon \, \Vert D_{m}\Vert_2<1 \} \,.
\]
For each $k \in \mathbb{N}$ we consider the function $\phi_{k,1/m}$ defined in \eqref{fiks} and note that, if $\re s >0$ we have
\[
\re \phi_{k,1/m} (s) = \re \phi \big(s + \tfrac{1}{k} \big) - \frac{1}{m} > \varepsilon - \frac{1}{m} >0 \,.
\]
In other words, $\phi_{k,1/m} (\mathbb{C}_{+}) \subseteq \mathbb{C}_{+}$ and, by \cite[Theorem~B]{gordon1999composition}, the composition operator $C_{\phi_{k,1/m}} : \mathcal{H}^{2} \to \mathcal{H}^{2}$ is continuous. Having this,  if $D \in U_{m}$ (note that $D_{m} \in \mathcal{H}^{2}$ and recall \eqref{franck}), then 
\[
	\Vert C_{\phi} (D) \Vert_{2,k} = \Vert C_{\phi_{k,1/m}} (D_{m}) \Vert_{2} 
	\leq \Vert C_{\phi_{k,1/m}} \Vert \, \Vert D_{m} \Vert_{2}
	= \Vert C_{\phi_{k,1/m}} \Vert \, \Vert D \Vert_{2,m}
	\leq \Vert C_{\phi_{k,1/m}} \Vert \,,
\]
and $C_{\phi}$ is bounded. The case $c_{0}=0$ and $\phi(\C_{+})\subseteq\C_{\frac{1}{2}+\varepsilon}$ follows in the same way.\\
Suppose now that $\phi$ generates a bounded operator. Then there exists $N\in\mathbb{N}$ (that we may assume $N\geq 2$, since the sequence of seminorms $\Vert \cdot \Vert_{2,k}$ is increasing) such that for each $k\in\mathbb{N}$ there is $M_{k}>0$ such that $\Vert C_{\phi}(D)\Vert_{2,k}\leq{M_{k}\Vert D\Vert_{2,N}}$ for every $D\in\mathcal{H}_{+}$. Fix now some $k \in \mathbb{N}$ and consider the function $\phi_{k,1/N}$ defined in \eqref{fiks}. By Theorem~\ref{juliososa} (check \eqref{1019}) it defines a composition operator $\mathcal{H}_{+} \to \mathcal{D}$, and our aim now is to show that in fact $C_{\phi_{k,1/N}} : \mathcal{H}^{2} \to \mathcal{H}^{2}$ is continuous.\\
If $P = \sum_{n=1}^{M} a_{n} n^{-s}$ is a Dirichlet polynomial, we may consider $P_{-N}=\sum_{n=1}^{M} a_{n}n^{\frac{1}{N}}n^{-s}$. This is again a Dirichlet polynomial, and as in \eqref{franck} we have $C_{\phi_{k,1/N}} (P) = (P_{-N} \circ \phi)_{k}$. Hence
\[
		\Vert C_{\phi_{k,1/N}} (P) \Vert_{2} = \Vert P_{-N} \circ \phi \Vert_{2,k}
		\leq M_{k} \Vert P_{-N}\Vert_{2,N} = M_{k} \Vert P \Vert_{2} \,.
\]
This shows that $C_{\phi_{k,1/N}}$ defines on the Dirichlet polynomials a continuous operator taking values in $\mathcal{H}^{2}$. This operator extends by density to a continuous $G_{k,N} : \mathcal{H}^{2} \to \mathcal{H}^{2}$. Our aim now is to see that this operator in fact coincides with $C_{\phi_{k,1/N}}$. Choose some $D \in \mathcal{H}^{2}$ and take a sequence $(P_{M})_{M}$ of Dirichlet polynomials converging in $\mathcal{H}^{2}$ to $D$. Then $G_{k,N} (P_{M}) \to G_{k,N}(D)$. Since $C_{\phi}$ is
continuous we know from Theorem~\ref{teo: op cont} that $\phi (\mathbb{C}_{+}) \subseteq \mathbb{C}_{+}$.
Then, by \cite[Proposition~4.2]{gordon1999composition} (see Remark~\ref{gordon4.2}), $\varphi (\mathbb{C}_{1/k}) \subseteq \mathbb{C}_{\sigma}$. Let us see now that, if $\re s >1$, then
\begin{equation} \label{brl}
		\re  \Big( \phi_{k,1/N} (s) \Big) 
		= c_{0} \re s + \frac{c_{0}}{k} - \frac{1}{N} + \re \Big( \varphi \big( s + \tfrac{1}{k} \big) \Big) > \frac{1}{2} \,.
\end{equation}
Indeed, the case $c_{0}\neq0$ is clear. On the other hand, if $c_0=0$ then $\varphi(\mathbb{C}_1)\subset\mathbb{C}_{1/2 + \varepsilon}$ for some $\varepsilon >0$ and therefore taking $N$ sufficiently large, we have the inequality.\\
Once we have established \eqref{brl}, Remark~\ref{convergencia} implies that $P_{M} \big(\phi_{k,1/N} (s) \big) \to D  \big(\phi_{k,1/N} (s) \big)$. Hence $G_{k,N}(D)$ and $D \circ \phi_{k,1/N}$ coincide on $\mathbb{C}_{1}$, and they have to be equal in all the half-plane where they are defined. Therefore $G_{k,N} = C_{\phi_{k,1/N}}$, and the composition operator is continuous from $\mathcal{H}^{2}$ to $\mathcal{H}^{2}$. Then, by \cite[Theorem~B]{gordon1999composition}, $\phi_{k,1/N} (\mathbb{C}_{+}) \subseteq \mathbb{C}_{\sigma}$, where $\sigma = 0$ if $c_{0} \neq 0$ and $\sigma = \frac{1}{2}$ if $c_{0}=0$. This gives $\phi (\mathbb{C}_{\frac{1}{k}}) \subseteq \mathbb{C}_{\sigma + \frac{1}{N}}$ and, since $k$ was arbitrary, yields our claim.
\end{proof}

We finish this section by finding conditions so that the operator takes values in a smaller space, in the spirit of \cite[Theorems~12 and ~13]{bayart2002hardy}, where conditions where given so that $\mathcal{H}^{p}$ is mapped to $\mathcal{H}^{q}$ for $1 \leq q \leq \infty$.

Our first result in this matter gives sufficient conditions on the composition operator to take values in $\mathcal{H}^{p}$. Its proof is modelled along the same lines as \cite[Theorem~12]{bayart2002hardy}
	
\begin{teo}\label{teo: inclusion en Hp}
Let $\phi(s)=c_{0}s+\varphi(s)$, with $c_{0}\geq1$ and $\varphi\in\mathcal{D}$ be such that $\phi(\mathbb{C}_{+})\subseteq{\mathbb{C}_{\varepsilon}}$ for some $\varepsilon>0$. Then $C_{\phi} (\mathcal{H}_{+}) \subseteq \mathcal{H}^{p}$ for every $1\leq{p}<\infty$.
\end{teo}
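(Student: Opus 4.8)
The key is that the hypothesis $\phi(\mathbb{C}_{+}) \subseteq \mathbb{C}_{\varepsilon}$ with $\varepsilon > 0$ leaves a positive ``cushion'' of width $\varepsilon$ which I can split in two: part of it is spent translating $D$ (turning the mere membership $D \in \mathcal{H}_{+}$ into membership in \emph{every} $\mathcal{H}^{p}$, via Remark~\ref{fok}), while the remaining part keeps the residual symbol mapping $\mathbb{C}_{+}$ into $\mathbb{C}_{+}$, so that it still generates a bounded composition operator on $\mathcal{H}^{p}$. Concretely, the plan is to fix any $\rho$ with $0 < \rho < \varepsilon$, set $g = D_{1/\rho}$ (that is, $g(w) = D(w + \rho)$) and $\hat\phi(s) = \phi(s) - \rho = c_{0}s + \big( \varphi(s) - \rho \big)$, and to prove the factorisation $D \circ \phi = g \circ \hat\phi$ with $g \in \mathcal{H}^{p}$ and $C_{\hat\phi} \colon \mathcal{H}^{p} \to \mathcal{H}^{p}$ bounded.

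First I would check that $g \in \mathcal{H}^{p}$ for every $1 \leq p < \infty$. Since $D \in \mathcal{H}_{+}$, translating by the positive amount $\rho/2$ already gives $D_{2/\rho} \in \mathcal{H}^{2}$. Translating this once more by $\rho/2 > 0$ and invoking Remark~\ref{fok} yields $g = (D_{2/\rho})_{2/\rho} \in \mathcal{H}^{q}$ for every $2 < q < \infty$; combined with the inclusion $\mathcal{H}^{2} \subseteq \mathcal{H}^{q}$ for $q \leq 2$, this gives $g \in \mathcal{H}^{p}$ for all $1 \leq p < \infty$, as wanted.

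Next I would observe that $\hat\phi$ is again of Gordon--Hedenmalm form with leading coefficient $c_{0} \geq 1$, and that $\hat\phi(\mathbb{C}_{+}) = \phi(\mathbb{C}_{+}) - \rho \subseteq \mathbb{C}_{\varepsilon - \rho} \subseteq \mathbb{C}_{+}$. Hence, by the characterisation of bounded composition operators on $\mathcal{H}^{p}$ (the case $p = 2$ being \cite{gordon1999composition} and the general case \cite{bayart2002hardy}), the operator $C_{\hat\phi} \colon \mathcal{H}^{p} \to \mathcal{H}^{p}$ is bounded, so $g \circ \hat\phi \in \mathcal{H}^{p}$. It then only remains to identify $g \circ \hat\phi$ with $D \circ \phi$. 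For $\re s$ large enough both $\phi(s)$ and $\hat\phi(s) = \phi(s) - \rho$ have real part exceeding $\sigma_{a}(D)$ (recall $c_{0} \geq 1$, so $\re \phi(s) \to \infty$), and there the absolutely convergent series satisfy $g(\hat\phi(s)) = D(\hat\phi(s) + \rho) = D(\phi(s))$; since a holomorphic function on a half-plane admits at most one Dirichlet series expansion, the two Dirichlet series coincide. As $D \circ \phi$ is a well-defined element of $\mathcal{H}_{+} \subseteq \mathcal{D}$ by Theorem~\ref{teo: op cont}, I conclude $D \circ \phi = g \circ \hat\phi \in \mathcal{H}^{p}$.

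The only genuinely delicate point is this last bookkeeping step: one must make sure that $D \circ \phi$, $g$, and $g \circ \hat\phi$ are all legitimately defined as Dirichlet series on a common right half-plane before comparing coefficients, and that the functional identity $g(\hat\phi(s)) = D(\phi(s))$ holds where all series converge. Once the factorisation $D \circ \phi = g \circ \hat\phi$ is justified, everything else reduces to a direct application of Remark~\ref{fok} and the known boundedness criteria, so I expect no further obstacle.
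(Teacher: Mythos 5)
Your proof is correct and takes essentially the same route as the paper's: the paper sets $\gamma(s)=\phi(s)-\varepsilon$ and writes $C_{\phi}(D)=C_{\gamma}(D_{1/\varepsilon})$, invoking Bayart's boundedness criterion for $C_{\gamma}$ on $\mathcal{H}^{p}$ (stated there for $2\leq p<\infty$) together with the inclusion $\mathcal{H}^{2}\subseteq\mathcal{H}^{p}$ for $p\leq 2$ --- exactly your factorisation with $\rho=\varepsilon$ instead of $\rho<\varepsilon$. The only cosmetic difference is your choice of a strictly smaller shift $\rho$, which leaves the cushion $\varepsilon-\rho>0$ but changes nothing essential.
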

\begin{proof}
Let us consider the function
\[
		\gamma (s) = \phi(s) - \varepsilon = c_{0}s + \varphi (s) - \varepsilon = c_{0}s+ \psi(s) \,,
\]	
and note that $\psi = (c_{1} - \varepsilon) + \sum_{n \geq 2} c_{n} n^{-s} \in \mathcal{D}$. On the one hand we have $\phi(\mathbb{C}_{+})\subseteq\mathbb{C}_{\varepsilon}$. This, by \cite[Proposition~4.2]{gordon1999composition} (see Remark~\ref{gordon4.2}), gives $\varphi(\mathbb{C}_{\frac{1}{2}}) \subseteq \mathbb{C}_{\varepsilon + \delta}$, which together with the fact that $c_{0}\geq 1$, yields
$\phi(\mathbb{C}_{\frac{1}{2}})\subseteq{\mathbb{C}_{\frac{1}{2}+\varepsilon}}$. Then $\gamma(\mathbb{C}_{\frac{1}{2}})\subseteq \mathbb{C}_{\frac{1}{2}}$ and $\gamma(\mathbb{C_{+}})\subseteq \mathbb{C}_{+}$. By  \cite[Theorem~11]{bayart2002hardy} $C_{\gamma} : \mathcal{H}^{p} \to \mathcal{H}^{p}$ is a well defined composition operator for every $2 \leq p < \infty$. Fix some $2 \leq p < \infty$.  Given $D = \sum a_{n} n^{-s} \in \mathcal{H}_{+}$  we have $D_{\frac{1}{\varepsilon}} \in \mathcal{H}^{p}$  and
\[
		C_{\phi}(D) = \sum a_{n} n^{- \phi(s)} =  \sum \frac{a_{n}}{n^{\varepsilon}} n^{- \phi(s)  +\varepsilon} 
		=   \sum \frac{a_{n}}{n^{\varepsilon}} n^{- \gamma(s) } = C_{\gamma} (D_{\frac{1}{\varepsilon}}) \in \mathcal{H}^{p} \,.
\]
This and the fact that $\mathcal{H}^{2} \subseteq \mathcal{H}^{p} \subseteq \mathcal{H}^{1}$ for every $1 \leq p \leq 2$ complete the proof.
\end{proof}

\begin{teo}
Let $\phi:\mathbb{C}_{+}\to\mathbb{C}_{\frac{1}{2}}$  be as in \eqref{fi}. Then 
\begin{enumerate}
	\item $C_{\phi} (\mathcal{H}_{+}) \subseteq \mathcal{H}^{\infty}$ if and only if $\phi(\mathbb{C}_{+})\subseteq\mathbb{C}_{\frac{1}{2}+\varepsilon}$ for some $\varepsilon>0$,
	\item $C_{\phi} (\mathcal{H}_{+} ) \subseteq \mathcal{H}^{\infty}_{+}$ if and only if $\phi(\mathbb{C}_{+})\subseteq\mathbb{C}_{\frac{1}{2}}$.
\end{enumerate}
\end{teo}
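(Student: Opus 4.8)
The plan is to prove each equivalence by splitting it into its two implications, handling the two \emph{sufficiency} directions first since they rest on the same two facts: that every $D=\sum a_n n^{-s}\in\mathcal H_+$ has $\sigma_a(D)\le\tfrac12$ (so $\sum|a_n|n^{-\sigma}<\infty$ for every $\sigma>\tfrac12$), and the Gordon--Hedenmalm estimate \cite[Proposition~4.2]{gordon1999composition} (Remark~\ref{gordon4.2}), which upgrades a containment on $\mathbb C_+$ to a strictly better one on every interior half-plane. For sufficiency in (2), assuming $\phi(\mathbb C_+)\subseteq\mathbb C_{1/2}$, I would fix $\delta>0$, use Proposition~4.2 to find $\varepsilon>0$ with $\phi(\mathbb C_\delta)\subseteq\mathbb C_{1/2+\varepsilon}$, and bound $|C_\phi(D)(s)|=|D(\phi(s))|\le\sum|a_n|n^{-1/2-\varepsilon}<\infty$ uniformly on $\mathbb C_\delta$; as $\delta$ is arbitrary this says $\sigma_u(C_\phi D)\le 0$, i.e.\ $C_\phi D\in\mathcal H^\infty_+$. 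For sufficiency in (1) the same estimate is run directly on $\mathbb C_+$: if $\phi(\mathbb C_+)\subseteq\mathbb C_{1/2+\varepsilon}$ then $|D(\phi(s))|\le\sum|a_n|n^{-1/2-\varepsilon}$ for all $s\in\mathbb C_+$, so $C_\phi D$ is a Dirichlet series (Theorem~\ref{juliososa}) that is bounded and holomorphic on $\mathbb C_+$, hence lies in $\mathcal H^\infty$.

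For necessity in (2) I would use a pole-hitting argument. Suppose $\re\phi(s_0)\le\tfrac12$ for some $s_0\in\mathbb C_+$ and set $w_0=\phi(s_0)$. The series $g_{w_0}(w)=\zeta(w-w_0+1)=\sum_n n^{w_0-1}n^{-w}$ has $|c_n|=n^{\re w_0-1}$ with $\re w_0-1\le-\tfrac12$, so $\sum|c_n|^2n^{-2\delta}<\infty$ for every $\delta>0$ and hence $g_{w_0}\in\mathcal H_+$. Its image $C_\phi(g_{w_0})$ agrees on a half-plane with $\zeta(\phi(s)-w_0+1)$; since $\zeta$ has a simple pole at $1$ and $\phi(s)-w_0+1$ equals $1$ at $s_0$ (with $\phi$ non-constant, $\phi(s)-\phi(s_0)$ has a finite-order zero there), the composition has a genuine pole at $s_0\in\mathbb C_+$. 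But elements of $\mathcal H^\infty_+$ are holomorphic on all of $\mathbb C_+$, so $C_\phi(g_{w_0})\notin\mathcal H^\infty_+$, contradicting $C_\phi(\mathcal H_+)\subseteq\mathcal H^\infty_+$. Thus $\re\phi>\tfrac12$ everywhere, i.e.\ $\phi(\mathbb C_+)\subseteq\mathbb C_{1/2}$. (A constant $\phi$ is excluded, since then $C_\phi$ is not even well defined on $\mathcal H_+$.)

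The necessity in (1) is the step I expect to be the crux. The naive approach---taking $s_j$ with $\re\phi(s_j)\downarrow\tfrac12$ and a vertical translate of $\zeta$ blowing up there---fails when $\Im\phi(s_j)\to\infty$, because $\zeta$ does not blow up along vertical lines at infinity. To bypass any control of the imaginary parts I would argue by uniform boundedness. First, $C_\phi(\mathcal H_+)\subseteq\mathcal H^\infty\subseteq\mathcal H^\infty_+$ together with the necessity just proved in (2) already forces $\phi(\mathbb C_+)\subseteq\mathbb C_{1/2}$, so $\re\phi(s)>\tfrac12\ge\sigma_a(D)$ and the evaluation $\mathrm{ev}_{\phi(s)}(D)=D(\phi(s))=\sum_n a_n n^{-\phi(s)}$ is a well-defined, continuous functional on $\mathcal H_+$ for each $s$. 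For fixed $D$ the family $\{\mathrm{ev}_{\phi(s)}\}_{s\in\mathbb C_+}$ is pointwise bounded, since $\sup_s|D(\phi(s))|=\|C_\phi D\|_{\mathcal H^\infty}<\infty$. As $\mathcal H_+$ is Fréchet, hence barrelled, the Banach--Steinhaus theorem gives equicontinuity: there are $k\in\mathbb N$ and $C>0$ with $|\mathrm{ev}_{\phi(s)}(D)|\le C\|D\|_{2,k}$ for all $s$ and all $D$.

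Finally I would compute the norm of $\mathrm{ev}_{\phi(s)}$ for the Hilbert seminorm $\|\cdot\|_{2,k}$ explicitly: writing $x_n=a_nn^{-1/k}$ gives $\|D\|_{2,k}=\|x\|_{\ell_2}$ and $\mathrm{ev}_{\phi(s)}(D)=\sum_n x_n\,n^{1/k-\phi(s)}$, so the dual norm (tested on the dense Dirichlet polynomials) equals $\big(\sum_n n^{2/k-2\re\phi(s)}\big)^{1/2}=\zeta\!\big(2\re\phi(s)-\tfrac2k\big)^{1/2}$. The equicontinuity bound then forces $\zeta\!\big(2\re\phi(s)-\tfrac2k\big)\le C^2$ for every $s$; since $\zeta(\sigma)\to+\infty$ as $\sigma\downarrow1$, there is $\sigma_0>1$ with $\zeta>C^2$ on $(1,\sigma_0)$, whence $2\re\phi(s)-\tfrac2k\ge\sigma_0$, i.e.\ $\re\phi(s)\ge\tfrac12+\varepsilon$ with $\varepsilon=\tfrac{\sigma_0}2+\tfrac1k-\tfrac12>0$, uniformly in $s$. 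This is precisely $\phi(\mathbb C_+)\subseteq\mathbb C_{1/2+\varepsilon}$. The main obstacle is exactly this step: replacing the failed boundary/test-function argument by Banach--Steinhaus together with the exact Hilbertian evaluation norm $\zeta(2\re\phi(s)-2/k)^{1/2}$, whose blow-up as $\re\phi(s)\to\tfrac12$ is what produces the uniform gap $\varepsilon$.
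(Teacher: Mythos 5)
Your argument is correct, and the two sufficiency directions coincide with the paper's: for (1) the paper runs the same Cauchy--Schwarz estimate $|D(\phi(s))|\le\sum|a_n|n^{-1/2-\varepsilon}\le\big(\sum|a_n|^2n^{-\varepsilon}\big)^{1/2}\big(\sum n^{-1-\varepsilon}\big)^{1/2}$, and for (2) it uses \cite[Proposition~4.2]{gordon1999composition} to push $\phi(\mathbb{C}_\varepsilon)$ into $\mathbb{C}_{1/2+\delta}$ and then invokes part (1) for the shifted symbol $\Psi(s)=\phi(s+\varepsilon)$, which is your argument in slightly different clothing. The real divergence is in the necessity of (1): the paper disposes of it in one line by observing that $\mathcal{H}^p\subseteq\mathcal{H}_+$, so $C_\phi(\mathcal{H}^p)\subseteq\mathcal{H}^\infty$, and then quoting \cite[Theorem~13]{bayart2002hardy}. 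You instead give a self-contained proof: Banach--Steinhaus on the barrelled space $\mathcal{H}_+$ applied to the pointwise bounded family $\{\mathrm{ev}_{\phi(s)}\}_{s\in\mathbb{C}_+}$ yields a single $k$ and $C$ with $|D(\phi(s))|\le C\Vert D\Vert_{2,k}$, and the exact dual norm $\zeta\big(2\re\phi(s)-\tfrac{2}{k}\big)^{1/2}$ of the evaluation with respect to the Hilbert seminorm (computed on the dense Dirichlet polynomials) then forces the uniform gap $\re\phi(s)\ge\tfrac12+\varepsilon$ because $\zeta(\sigma)\to\infty$ as $\sigma\downarrow 1$. This is essentially a Fr\'echet-space reworking of the reproducing-kernel argument behind Bayart's theorem; what it buys is independence from that reference (and it makes transparent where the quantitative $\varepsilon$ comes from), at the cost of being longer. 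Two minor remarks: your pole-hitting argument for the necessity in (2) is correct but unnecessary, since the theorem's standing hypothesis is already $\phi:\mathbb{C}_+\to\mathbb{C}_{1/2}$, so that direction is vacuous (the paper says exactly this); and your parenthetical claim that a constant $\phi$ never yields a well-defined $C_\phi$ on $\mathcal{H}_+$ is not right in general (a constant $c$ with $\re c>\tfrac12$ works perfectly well), though it is harmless where you use it. Finally, note that your Banach--Steinhaus step only gives $\re\phi(s)\ge\tfrac12+\varepsilon$; pass to $\varepsilon/2$ to land in the open half-plane $\mathbb{C}_{1/2+\varepsilon/2}$.
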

\begin{proof}
First of all, if $C_{\phi} (\mathcal{H}_{+}) \subseteq \mathcal{H}_{\infty}$ then, in particular (recall Remark~\ref{inclusiones1}), $\C_{\phi}(\mathcal{H}^{p})\subseteq{\mathcal{H}_{\infty}}$ and \cite[Theorem~13]{bayart2002hardy} gives $\phi(\mathbb{C}_{+})\subseteq\mathbb{C}_{\frac{1}{2}+\varepsilon}$ for some $\varepsilon>0$.\\
Let us conversely suppose that there is $\varepsilon >0$ so that $\re(\phi(s))>\frac{1}{2}+\varepsilon$ for all $s\in\mathbb{C}_{+}$. Take a Dirichlet series $D=\sum{a_{n}n^{-s}}$ in $\mathcal{H}_{+}$ and $s\in\mathbb{C}_{+}$, then
\[
		|C_{\phi}(D)(s)|= | \sum_{n=1}^{\infty} a_{n}n^{-\phi(s)}|
		\leq \sum_{n=1}^{\infty} |a_{n}| n^{-\frac{1}{2}-\varepsilon}
		\leq \bigg( \sum_{n=1}^{\infty} \frac{|a_{n}|^{2}}{n^{\varepsilon}} \bigg)^{\frac{1}{2}} \bigg( \sum_{n=1}^{\infty} 
		\frac{1}{n^{1+\varepsilon}} \bigg)^{\frac{1}{2}} \,.
\]
The last term is finite because $D\in\mathcal{H}_{+}$ (and obviously does not depend on $s$). Hence $C_{\phi}(D)\in\mathcal{H}_{\infty}$ and the proof of the first statement is completed. \\
For the second statement let us assume that $\phi(\mathbb{C}_{+})\subseteq\mathbb{C}_{\frac{1}{2}}$, and fix $\varepsilon >0$. From \cite[Proposition~4.2]{gordon1999composition} we know that we can find $\delta >0$ so that $\phi(\mathbb{C}_{\varepsilon}) \subseteq \mathbb{C}_{\frac{1}{2} + \delta}$. 
Define a function
\[
	\Psi (s) = \phi(s+ \varepsilon) =c_{0} s+ c_{0} \varepsilon + \varphi_{\frac{1}{\varepsilon}}(s) 
\]
and observe that $\psi  = c_{0} \varepsilon + \varphi_{\frac{1}{\varepsilon}} \in \mathcal{D}$. Clearly $\Psi(\mathbb{C}_{+} ) = \phi ( \mathbb{C}_{\varepsilon} )\subseteq \mathbb{C}_{\frac{1}{2}+ \delta}$ and, therefore
the composition operator $C_{\Psi}$ maps $\mathcal{H}_{+}$ into $\mathcal{H}_{\infty}$. For $D \in \mathcal{H}_{+}$ we have
\[
	\sup_{s\in\mathbb{C}_{\varepsilon}} |C_{\phi}(D)(s)| 
	= \sup_{s\in\mathbb{C}_{\varepsilon}} |D(\phi(s))| 
	=\sup_{s\in\mathbb{C}_{+}} |D(\phi(s+\varepsilon))| 
	=\sup_{s\in\mathbb{C}_{+}} | C_{\Psi} (D)| \,.
\]
Being the latter finite (because $C_{\Psi} (D) \in \mathcal{H}^{\infty}$) and $\varepsilon >0$ arbitrary this yields $C_{\phi}(D) \in \mathcal{H}^{\infty}_{+}$. Necessity follows just by hypothesis.
\end{proof}

We finish this section by looking at vertical limits, a topic that in our view is interesting by itself and which eventually will lead to an alternative (though longer) proof of the necessity in Theorem~\ref{teo: op cont}. We begin by fixing some concepts and notations. First of all, a character is a function $\chi : \mathbb{N} \to \mathbb{T}$ satisfying $\chi(nm) = \chi (n) \chi(m)$ for every $n,m$. We 
write $\Xi$ for the group of all characters that, as a matter of fact, is the dual group of the multiplicative group $\mathbb{Q}_{+}$ of positive rationals. With this in mind $\Xi$ is a compact abelian group that has a unique (up to normalisation) Haar measure. In \cite{hedenmalm1995hilbert} it was shown how $\Xi$ can be identified with $\mathbb{T}^{\infty}$ and the Haar measure corresponds to the product of the normalised Lebesgue measure.\\
Given a Dirichlet series $D= \sum a_{n} n^{-s}$ and a character $\chi \in \Xi$ we denote
\[
	D_{\chi} = \sum a_{n} \chi(n) n^{-s} \,.
\]
We know from \cite[Lemma~2.4]{hedenmalm1995hilbert} that the functions $D_{\chi}$ for $\chi \in \Xi$ correspond exactly with the vertical limits of $D$. 
Clearly, if $D$ belongs to $\mathcal{D}$ (or to $\mathcal{H}_{+}$), then so also does $D_{\chi}$. Now, if $\phi$ is defined as in \eqref{fi}, then we write
\[
	\phi_{\chi} (s) = c_{0} s + \varphi_{\chi} \,,
\]
which can no longer be seen as a vertical limit. Our first step is to see, following \cite[Proposition~4.3]{gordon1999composition} how this action of the characters relates with composition. Let us recall that if $\phi$ is as in \eqref{fi}, by \cite[equation (4.2)]{gordon1999composition}, we have
\begin{equation} \label{monomio}
	\big( n^{- \phi} \big)_{\chi} (s) = \chi(n)^{c_{0}} n^{- \phi_{\chi}(s)} 
\end{equation}
for $s \in \mathbb{C}_{\sigma_{u}(\varphi)}$.

\begin{obs} \label{gordon4.2}
If $\phi$ is defined as in \eqref{fi}, then \cite[Proposition~4.2]{gordon1999composition} provides us with a good control of the behaviour of $\varphi$ in half-planes. More precisely, if $\phi ( \mathbb{C}_{\theta}) \subseteq \mathbb{C}_{\eta}$, then for every $\varepsilon >0$ there is some $\delta >0$ so that $\varphi (\mathbb{C}_{\theta + \varepsilon}) \subseteq \mathbb{C}_{\eta + \delta - c_{0} \theta}$.
\end{obs}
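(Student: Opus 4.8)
The statement is, in the notation of the present paper, a reformulation of \cite[Proposition~4.2]{gordon1999composition}, which is precisely an estimate of this type. The plan is therefore to reduce a general half-plane $\mathbb{C}_{\theta}$ to the case treated there by an affine change of variables, keeping careful track of the constant that the translation introduces. Concretely, I would first set $\tilde{\phi}(s)=\phi(s+\theta)$. Since $\phi(\mathbb{C}_{\theta})\subseteq\mathbb{C}_{\eta}$, we get at once $\tilde{\phi}(\mathbb{C}_{+})=\phi(\mathbb{C}_{\theta})\subseteq\mathbb{C}_{\eta}$, so the hypothesis is now posed over $\mathbb{C}_{+}$.

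Writing out the shift,
\[
\tilde{\phi}(s)=c_{0}(s+\theta)+\varphi(s+\theta)=c_{0}s+\big(c_{0}\theta+\varphi(s+\theta)\big)=:c_{0}s+\tilde{\varphi}(s),
\]
the key point to verify is that $\tilde{\varphi}$ is again a Dirichlet series of the form \eqref{fi}: indeed $\varphi(s+\theta)=\sum_{n}c_{n}n^{-\theta}n^{-s}$ merely rescales the coefficients of $\varphi$ (and is holomorphic on $\mathbb{C}_{+}$ since $\phi$, hence $\varphi=\phi-c_{0}s$, is holomorphic on $\mathbb{C}_{\theta}$), while adding the constant $c_{0}\theta$ only modifies the first coefficient. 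Thus $\tilde{\phi}=c_{0}s+\tilde{\varphi}$ carries the same leading integer $c_{0}$ and satisfies the hypotheses of \cite[Proposition~4.2]{gordon1999composition}. Applying that result to $\tilde{\phi}$ produces, for each $\varepsilon>0$, some $\delta>0$ with $\re\tilde{\varphi}(s)>\eta+\delta$ whenever $\re s>\varepsilon$. Undoing the substitution via $\re\varphi(s+\theta)=\re\tilde{\varphi}(s)-c_{0}\theta$ yields $\re\varphi(w)>\eta+\delta-c_{0}\theta$ for every $w$ with $\re w>\theta+\varepsilon$, that is $\varphi(\mathbb{C}_{\theta+\varepsilon})\subseteq\mathbb{C}_{\eta+\delta-c_{0}\theta}$. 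The term $-c_{0}\theta$ in the statement is exactly the constant generated by the translation $\phi(s+\theta)$.

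I expect the whole proof to be this translation bookkeeping, since the genuine content sits in \cite[Proposition~4.2]{gordon1999composition} itself: the strict gain $\delta>0$ obtained by passing from the half-plane of the hypothesis to the slightly smaller $\mathbb{C}_{\theta+\varepsilon}$. That is where one uses that $\re\varphi$ is harmonic and that a uniformly convergent Dirichlet series is almost periodic, so that the infimum of its real part is controlled on the boundary line and strictly improves in the interior. The main thing to be careful about is the lone degenerate case $\varphi\equiv\text{const}$ (equivalently $\phi$ affine), where no strict improvement is available; this is precisely the situation the authors isolate and dispatch by a direct computation in the proofs of Theorems~\ref{teo: op cont} and~\ref{teo: inclusion en Hp}, so the reduction above covers all the remaining, non-degenerate, instances.
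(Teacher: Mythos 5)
Your argument is correct and is essentially what the paper intends: Remark~\ref{gordon4.2} carries no proof because it is a direct restatement of \cite[Proposition~4.2]{gordon1999composition}, and your substitution $\tilde{\phi}(s)=\phi(s+\theta)$, which turns $\varphi$ into $c_{0}\theta+\varphi(s+\theta)$ and thereby produces the $-c_{0}\theta$ in the conclusion, is exactly the translation bookkeeping implicit there. Your caveat about the degenerate case of constant $\varphi$ (e.g.\ $\phi(s)=c_{0}s$, where no strict gain $\delta>0$ is available) is also well taken and consistent with how the paper proceeds, since the remark is only ever invoked for non-constant $\varphi$ while the case $\varphi=0$ is handled by a separate direct computation.
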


\begin{prop}\label{prop: 3}
Suppose $\phi:\C_{\theta}\rightarrow\C_{\frac{1}{2}}$ is a holomorphic mapping as in \eqref{fi}, 
then for $D\in\mathcal{H}_{+}$ and $\chi\in\Xi$, the following relation holds:
\begin{equation} \label{vdpoel}
		(D\circ\phi )_{\chi}(s) = D_{\chi^{c_{0}}} \circ \phi_{\chi}(s) 
\end{equation}
for $s\in\C_{\theta}$.
\end{prop}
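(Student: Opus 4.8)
The plan is to verify \eqref{vdpoel} first for the special characters coming from vertical translations and then to extend it to arbitrary $\chi$ by density. For $\tau\in\mathbb{R}$ let $\chi_{\tau}$ be the character $\chi_{\tau}(n)=n^{-i\tau}$; by Kronecker's theorem the set $\{\chi_{\tau}:\tau\in\mathbb{R}\}$ is dense in $\Xi\cong\mathbb{T}^{\infty}$. For these characters both sides of \eqref{vdpoel} are honest vertical translates, so the identity is an elementary computation: since $\varphi_{\chi_{\tau}}(s)=\varphi(s+i\tau)$ one has $\phi(s+i\tau)=\phi_{\chi_{\tau}}(s)+ic_{0}\tau$, whence
\[
(D\circ\phi)_{\chi_{\tau}}(s)=(D\circ\phi)(s+i\tau)=D\big(\phi(s+i\tau)\big)=\sum_{n} a_{n}\, n^{-ic_{0}\tau}\, n^{-\phi_{\chi_{\tau}}(s)}=D_{\chi_{\tau}^{c_{0}}}\circ\phi_{\chi_{\tau}}(s),
\]
using $n^{-ic_{0}\tau}=\chi_{\tau}(n)^{c_{0}}$. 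This is exactly the monomial identity \eqref{monomio} for $\chi_{\tau}$ summed against the coefficients of $D$; crucially, no interchange of a limit with the sum is needed here, because the left-hand side is a genuine translation.

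To pass to a general $\chi\in\Xi$ I would fix $s$ with $\re s$ large enough that all the Dirichlet series in sight converge absolutely, and check that both sides of \eqref{vdpoel} depend continuously on $\chi$ (with $\Xi$ carrying its compact topology, i.e.\ pointwise convergence on $\mathbb{N}$). Writing $D\circ\phi=\sum_{m} c_{m} m^{-s}\in\mathcal{D}$, the left-hand side is $(D\circ\phi)_{\chi}(s)=\sum_{m} c_{m}\chi(m) m^{-s}$, so if $\chi_{j}\to\chi$ then $\chi_{j}(m)\to\chi(m)$ for every $m$ and dominated convergence (with bound $\sum_{m}|c_{m}| m^{-\re s}<\infty$) gives $(D\circ\phi)_{\chi_{j}}(s)\to(D\circ\phi)_{\chi}(s)$. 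The same argument shows $\varphi_{\chi_{j}}(s)\to\varphi_{\chi}(s)$, hence $\phi_{\chi_{j}}(s)\to\phi_{\chi}(s)$; moreover $D_{\chi_{j}^{c_{0}}}\to D_{\chi^{c_{0}}}$ in $\mathcal{H}_{+}$ (again by dominated convergence in each seminorm, since $\|D_{\chi_{j}^{c_{0}}}-D_{\chi^{c_{0}}}\|_{2,k}^{2}=\sum_{n}|a_{n}|^{2}|\chi_{j}(n)^{c_{0}}-\chi(n)^{c_{0}}|^{2}n^{-2/k}\to0$). Since the evaluation map $\mathcal{H}_{+}\times\mathbb{C}_{1/2}\to\mathbb{C}$, $(E,w)\mapsto E(w)$, is jointly continuous (from the locally uniform Cauchy--Schwarz bound $|E(w)|\le\|E\|_{2,k}\big(\sum_{n} n^{-2(\re w-1/k)}\big)^{1/2}$, valid once $1/k<\re w-\tfrac12$), we get $D_{\chi_{j}^{c_{0}}}\circ\phi_{\chi_{j}}(s)\to D_{\chi^{c_{0}}}\circ\phi_{\chi}(s)$. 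Combining the two continuity statements with the density of $\{\chi_{\tau}\}$ yields \eqref{vdpoel} for every $\chi$ at such $s$.

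It then remains to recover the full range of $s$. Both sides of \eqref{vdpoel} are holomorphic in $s$ on $\mathbb{C}_{\theta}$ (the left because vertical limits of a function holomorphic on $\mathbb{C}_{\theta}$ are again holomorphic there, the right because $\phi_{\chi}(\mathbb{C}_{\theta})\subseteq\mathbb{C}_{1/2}$ by the open mapping theorem and $D_{\chi^{c_{0}}}$ is holomorphic on $\mathbb{C}_{1/2}$), so equality on a half-plane $\re s>\sigma_{0}$ propagates to all of $\mathbb{C}_{\theta}$ by the identity theorem.

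The step I expect to be delicate is the extension from the vertical-translation characters to arbitrary $\chi\in\Xi$: one must guarantee that the limit in $\chi$ may be pulled inside the coefficient sums. This is precisely why I would first work in a half-plane of absolute convergence, where dominated convergence applies cleanly and one sidesteps any awkward uniform lower bound on $\re\phi$ along vertical lines, and only afterwards restore the general $s$ by analytic continuation.
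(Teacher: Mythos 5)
Your argument is essentially correct but follows a genuinely different route from the paper's. The paper fixes $\chi$ and approximates $D$ by its partial sums $D^{(N)}$: using \cite[Proposition~4.2]{gordon1999composition} it gets a uniform lower bound $\re\phi\geq\frac12+\varepsilon$ on a half-plane $\mathbb{C}_{\theta'}$ with $\theta'>\sigma_u(\varphi)$, deduces that $D^{(N)}\circ\phi\to D\circ\phi$ uniformly there, applies the monomial identity \eqref{monomio} term by term, and passes to the limit using the continuity of $\chi\mapsto(\cdot)_\chi$ under uniform convergence (treating the case of constant $\varphi$ separately). You instead fix $D$ and approximate $\chi$: for the translation characters $\chi_\tau$ the identity is a change of variables, and you extend to all of $\Xi$ by Kronecker density plus continuity in $\chi$ of both sides at a single $s$ with $\re s$ large, finishing with the identity principle in $s$. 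Your version makes the algebraic content of \eqref{vdpoel} transparent and replaces the uniform-convergence estimate by soft dominated-convergence arguments; the price is the extra input of Kronecker's theorem and the joint continuity of evaluation on $\mathcal{H}_{+}\times\mathbb{C}_{\frac12}$, which you justify adequately.

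Two justifications need repair, though neither is fatal. First, $\phi_{\chi}(\mathbb{C}_{\theta})\subseteq\mathbb{C}_{\frac12}$ does not follow from the open mapping theorem; that $\phi_\chi$ even extends holomorphically to $\mathbb{C}_\theta$ with values in $\mathbb{C}_{\frac12}$ is \cite[Proposition~4.1]{gordon1999composition}, which is exactly what the paper cites at this point (and for $c_0=0$ you also need $\re c_1>\frac12$, i.e.\ Remark~\ref{gordon4.2}, to guarantee $\re\phi_\chi(s)>\frac12$ at your chosen $s$ so that your Cauchy--Schwarz evaluation bound applies). Second, it is not true in general that the vertical limit of a Dirichlet series whose sum is holomorphic on $\mathbb{C}_\theta$ is again holomorphic on $\mathbb{C}_\theta$ for \emph{every} $\chi$ (compare Proposition~\ref{pretenders}, which is only an almost-everywhere statement); what saves you is that the identity already proved on a half-plane of absolute convergence exhibits $D_{\chi^{c_0}}\circ\phi_\chi$ as the holomorphic continuation of $(D\circ\phi)_\chi$ to $\mathbb{C}_\theta$, which is also how the paper's final identity-principle step must be read.
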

\begin{proof}
On the right hand side, by \cite[Proposition~4.1]{gordon1999composition}, we have that $\phi_{\chi}$ maps the half-plane $\mathbb{C}_{\theta}$ into $\mathbb{C}_{\frac{1}{2}}$. On the other hand, $D_{\chi^{c_{0}}}$ is in $\mathcal{H}_{+}$, since $D\in\mathcal{H}_{+}$ and $\chi^{c_{0}} \in \Xi$ (because $c_{0} \in \mathbb{N}_{0}$). Then (see \eqref{sigma-aes}) it converges absolutely on $\mathbb{C}_{\frac{1}{2}}$ , and $D_{\chi^{c_{0}}} \circ \phi_{\chi}$ defines a holomorphic function on $\mathbb{C}_{\theta}$.\\
On the left hand side, since $D = \sum a_{n} n^{-s} \in \mathcal{H}_{+}$, it converges absolutely on $\mathbb{C}_{\frac{1}{2}}$ (see again \eqref{sigma-aes}).
Then 
\[
	D\circ\phi(s)=\sum\limits_{n=1}^{\infty}{a_{n}n^{-\phi(s)}}
	= \sum_{n=1}^{\infty}  a_{n}  n^{-c_{0}s-c_1} \prod_{k=2}^{\infty} \left(1+\sum_{j=1}^{\infty} \frac{(-c_k\log(n))^{j}}{j!}k^{-js}\right) 
\]
can be rearranged as a Dirichlet series on some $\mathbb{C}_{\theta}$ for $\theta$ big enough.\\		 
Then, $(D\circ\phi)_{\chi}$ is a Dirichlet series that converges absolutely (an then defines a holomorphic function) on $\mathbb{C}_{\theta}$. The idea now is to see that these two functions coincide. We distinguish several cases.\\
First of all, if $\varphi$ is constant, then $\phi =\phi_{\chi} = c_{0}s + c_1$, and
\begin{multline*}
		(D \circ \phi )_{\chi} (s) =\bigg( \sum \big( a_{n} n^{-c_{1}} \big) (n^{c_{0}})^{-s} \bigg)_{\chi} \\
		= \sum \big( a_{n} n^{-c_{1}} \big) \chi \big( n^{c_{0}}\big) (n^{c_{0}})^{-s} 
		=\sum{\chi(n)^{c_{0}}a_{n}n^{-\phi_{\chi}(s)}}
		=D_{\chi^{c_{0}}}\circ\phi_{\chi}(s)
\end{multline*}
for every $s \in \mathbb{C}_{\theta}$, which gives our claim.\\
Suppose now that $\varphi$ is not constant and fix $\theta' > \sigma_{u}(\varphi) \geq \theta$. 
By  \cite[Proposition~4.2]{gordon1999composition} (see Remark~\ref{gordon4.2}), we can find some $\varepsilon=\varepsilon(\theta')>0$ such that $\varphi \big( \mathbb{C}_{\theta'} \big) \subseteq \mathbb{C}_{\frac{1}{2}+\varepsilon-c_{0}\theta}$ and
\[
	\re(\phi(s))=c_{0}\re(s)+\re(\varphi(s)) \geq c_{0} \theta + \frac{1}{2} + \varepsilon- c_{0}\theta =\frac{1}{2}+\varepsilon
\]
for every $s\in\mathbb{C}_{\theta'}$. Hence
\[
	\Vert n^{-\phi}\Vert_{\mathbb{C}_{\theta'}} : =\sup_{s \in \mathbb{C}_{\theta'}} \vert n^{-\phi(s)} \vert \leq \frac{1}{n^{\frac{1}{2}+\varepsilon}} .
\]
For each $N$ let $D^{(N)} = \sum_{n=1}^{N} a_{n} n^{-s}$ denote the $N$-th partial sum of $D$. Fix $k$ with $0 < \frac{1}{k} < \varepsilon$, then
\[
	\sum |a_{n}|\;\Vert n^{-\phi} \Vert_{\mathbb{C}_{{\theta}'}} 
	\leq \sum |a_n| n^{-\frac{1}{2}-\varepsilon}
	= \sum \frac{|a_n|}{n^{\frac{1}{k}}}n^{-\frac{1}{2}-\varepsilon+\frac{1}{k}}
	\leq \Big( \sum \frac{|a_n|^2}{n^{\frac{2}{k}}} \Big)^{\frac{1}{2}} \Big(\sum \frac{1}{n^{1+2\varepsilon-\frac{2}{k}}} \Big)^{\frac{1}{2}} \,,
\]
and these two series are convergent. This implies that $D^{(N)} \circ \phi = \sum_{n=1}^{N} a_{n} n^{-\phi}$ converges uniformly on $\mathbb{C}_{\theta'}$ to $D \circ \phi$. Using again \eqref{monomio} we have
\[
	(D^{(N)} \circ \phi)_{\chi}(s) 
	= \sum_{n=1}^{N} a_{n}\chi(n)^{c_{0}}n^{-\phi_{\chi}(s)}
	=\Big( (D^{(N)})_{\chi^{c_{0}}} \circ \phi_{\chi} \Big) (s)
\]
on $\mathbb{C}_{\theta'}$. Since taking vertical limits is continuous with respect to the uniform convergence, this gives
\[
	(D\circ\phi)_{\chi} = D_{\chi^{c_{0}}}\circ\phi_{\chi}
\]
on $\mathbb{C}_{\theta'}$. So, these are two holomorphic functions on $\mathbb{C}_{\theta}$ that coincide on a smaller half-plane. The identity principle yields the claim.
\end{proof}
	
The proof of the following result follows word by word that of \cite[Proposition~5.1]{gordon1999composition}, so we omit it.
		
\begin{prop}\label{prop: extension}
Let $\phi:\C_{\frac{1}{2}}\rightarrow\C_{\frac{1}{2}}$ be a holomorphic function for which the composition operator $C_{\phi}:\mathcal{H}_{+}\rightarrow\mathcal{H}_{+}$ is well defined and continuous. Then almost every (with respect to $\chi$) function  $\phi_{\chi}$ has an analytic extension to $\C_{+}$.
\end{prop}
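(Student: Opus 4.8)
The plan is to reduce the statement to a property of two concrete test series and then recover $\phi_\chi$ by taking a logarithm, along the lines of \cite[Proposition~5.1]{gordon1999composition}. Since $\phi_\chi(s)=c_0 s+\varphi_\chi(s)$ and the linear term is entire, it is enough to produce an analytic extension of $\varphi_\chi$ (equivalently of $\phi_\chi$) to $\C_+$ for almost every $\chi$. The test functions are the Dirichlet monomials $2^{-s}$ and $3^{-s}$, which lie in $\mathcal{H}^2\subseteq\mathcal{H}_+$; since $C_\phi$ maps $\mathcal{H}_+$ into itself, the images $2^{-\phi}$ and $3^{-\phi}$ belong to $\mathcal{H}_+$. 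The bridge between these images and $\phi_\chi$ is the identity \eqref{monomio} coming from Proposition~\ref{prop: 3}: on the half-plane where $\phi_\chi$ is defined one has $\big(q^{-\phi}\big)_\chi(s)=\chi(q)^{c_0}\,q^{-\phi_\chi(s)}$ for $q=2,3$.

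The second ingredient is that the vertical limits of an element of $\mathcal{H}_+$ converge on all of $\C_+$ for almost every character. I would deduce this from the corresponding statement for $\mathcal{H}^2$ (see \cite{hedenmalm1995hilbert}): if $D\in\mathcal{H}_+$, then $D_{1/\varepsilon}\in\mathcal{H}^2$ for every $\varepsilon>0$, so for almost every $\chi$ the series $\big(D_{1/\varepsilon}\big)_\chi=\big(D_\chi\big)_{1/\varepsilon}$ converges on $\C_+$, that is, $D_\chi$ converges on $\C_\varepsilon$; intersecting the resulting full-measure sets over $\varepsilon=\frac{1}{m}$, $m\in\mathbb{N}$, shows that $D_\chi$ converges (and is holomorphic) on $\C_+$ for almost every $\chi$. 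Applying this to $D=2^{-\phi}$ and $D=3^{-\phi}$ produces, for almost every $\chi$, two functions $\big(2^{-\phi}\big)_\chi$ and $\big(3^{-\phi}\big)_\chi$ holomorphic on the whole of $\C_+$ which agree with $\chi(2)^{c_0}2^{-\phi_\chi}$ and $\chi(3)^{c_0}3^{-\phi_\chi}$ on the smaller half-plane where $\phi_\chi$ is a priori defined.

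Finally I would recover $\phi_\chi$ itself. On their domain of definition the functions $\chi(q)^{c_0}q^{-\phi_\chi}=\exp\!\big(c_0\log\chi(q)-\phi_\chi\log q\big)$ are zero-free, and this nonvanishing passes, via Hurwitz's theorem, to the vertical limits $\big(q^{-\phi}\big)_\chi$ viewed as locally uniform limits of the zero-free translates of $q^{-\phi}$. Being zero-free on the simply connected half-plane $\C_+$, the extended functions admit single-valued holomorphic logarithms there, and $\phi_\chi=-\frac{1}{\log q}\log\big(\chi(q)^{-c_0}(q^{-\phi})_\chi\big)$ (up to an additive constant in $\frac{2\pi i}{\log q}\mathbb{Z}$) extends analytically to $\C_+$; using the two primes $2$ and $3$ with $\log 2/\log 3$ irrational pins down the additive period ambiguity and forces the two extensions to coincide with a single well-defined $\phi_\chi$, as in \cite[Lemma~2.1]{bayart2019composition}. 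The delicate point — and the main obstacle — is precisely the nonvanishing of the extended vertical limits on all of $\C_+$: Hurwitz's theorem applies directly only on the half-plane where $q^{-\phi}$ itself is holomorphic, so upgrading the zero-freeness to the larger half-plane reached by the analytic continuation of the Dirichlet series of $\big(q^{-\phi}\big)_\chi$, while discarding only a set of characters of measure zero, is the technical heart of the argument and is exactly what is carried out in \cite[Proposition~5.1]{gordon1999composition}.
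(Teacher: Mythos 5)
Your proposal is correct and follows essentially the same route as the paper, which simply states that the proof ``follows word by word that of \cite[Proposition~5.1]{gordon1999composition}'' and omits it; your sketch is a faithful reconstruction of that argument (test monomials $2^{-s}$, $3^{-s}$, almost-everywhere holomorphic extension of vertical limits of elements of $\mathcal{H}_{+}$ obtained by translating into $\mathcal{H}^{2}$ and intersecting countably many full-measure sets, non-vanishing, and recovery of $\phi_{\chi}$ via logarithms and the irrationality of $\log 2/\log 3$). You correctly identify and defer to Gordon--Hedenmalm the one genuinely delicate point (zero-freeness of the continued vertical limits on all of $\mathbb{C}_{+}$ outside a null set of characters), and your intermediate step is exactly the paper's Proposition~\ref{pretenders}.
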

		
We need one more result before we proceed to the proof of Theorem~\ref{teo: op cont}. We know from \cite[Theorem~4.1]{hedenmalm1995hilbert} that if $D\in\mathcal{H}^{2}$ then for almost every $\chi\in\Xi$ the vertical limit $D_{\chi}$ extends to a holomorphic function in $\C_{+}$. We need the counterpart in $\mathcal{H}_{+}$ of this fact.
		
\begin{prop} \label{pretenders}
Let $D \in \mathcal{H}_{+}$, then $D_{\chi}$ extends to a holomorphic function on $\mathbb{C}_{+}$ for almost every $\chi \in \Xi$
\end{prop}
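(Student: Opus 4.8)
The plan is to reduce the statement to the known $\mathcal{H}^{2}$ result \cite[Theorem~4.1]{hedenmalm1995hilbert} by exploiting that every $D \in \mathcal{H}_{+}$ becomes an element of $\mathcal{H}^{2}$ after any positive translation, together with the elementary fact that the character twist commutes with translation. First I would record this commutation: if $D = \sum a_{n} n^{-s}$ and $k \in \mathbb{N}$, then
\[
(D_{k})_{\chi} = \sum \frac{a_{n} \chi(n)}{n^{1/k}} n^{-s} = (D_{\chi})_{k} \,,
\]
so that, as functions wherever the series converge, $(D_{k})_{\chi}(s) = D_{\chi}\big(s + \tfrac{1}{k}\big)$ (recall \eqref{charpentier}). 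Since $D \in \mathcal{H}_{+} = \bigcap_{k} \mathcal{H}_{k}^{2}$, each translate $D_{k} = \sum \frac{a_{n}}{n^{1/k}} n^{-s}$ lies in $\mathcal{H}^{2}$.

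Next, for each $k$ I would apply \cite[Theorem~4.1]{hedenmalm1995hilbert} to $D_{k} \in \mathcal{H}^{2}$: there is a set $A_{k} \subseteq \Xi$ of full Haar measure such that $(D_{k})_{\chi}$ extends to a holomorphic function on $\mathbb{C}_{+}$ for every $\chi \in A_{k}$. Setting $A = \bigcap_{k=1}^{\infty} A_{k}$, a countable intersection of full-measure sets, $A$ itself has full measure. Fix $\chi \in A$. For each $k$, the holomorphic extension $h_{k}$ of $(D_{k})_{\chi} = (D_{\chi})_{k}$ to $\mathbb{C}_{+}$ yields, via the substitution $w = s + \tfrac{1}{k}$, a holomorphic extension $w \mapsto h_{k}\big(w - \tfrac{1}{k}\big)$ of $D_{\chi}$ to the half-plane $\mathbb{C}_{1/k}$.

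Finally I would glue these extensions. Since $D \in \mathcal{H}_{+}$ and characters have modulus one, $\sigma_{a}(D_{\chi}) = \sigma_{a}(D) \leq \tfrac{1}{2}$ (see \eqref{sigma-aes}), so all the functions $h_{k}\big(\cdot - \tfrac{1}{k}\big)$ agree with the absolutely convergent series $D_{\chi}$ on $\mathbb{C}_{1/2}$; by the identity principle they coincide on the overlaps of their domains and therefore patch together into a single holomorphic function on $\bigcup_{k} \mathbb{C}_{1/k} = \mathbb{C}_{+}$, which is the sought extension of $D_{\chi}$.

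The argument is essentially routine once \cite[Theorem~4.1]{hedenmalm1995hilbert} is available; the points requiring care are the passage to a single full-measure set valid for all $k$ simultaneously (handled by the countable intersection) and the observation that the half-planes $\mathbb{C}_{1/k}$ exhaust $\mathbb{C}_{+}$, so that no uniform control of the extension up to the boundary line $\re s = 0$ is needed. The only genuine obstacle is bookkeeping: checking that translation and character twist really commute and that the extensions obtained for different $k$ are restrictions of one another, which the identity principle guarantees.
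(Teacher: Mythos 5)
Your proposal is correct and follows essentially the same route as the paper: apply \cite[Theorem~4.1]{hedenmalm1995hilbert} to each translate $D_{k}\in\mathcal{H}^{2}$, intersect the countably many full-measure sets, shift each extension back to $\mathbb{C}_{1/k}$, and glue via the identity principle (the paper checks the agreement of consecutive extensions directly, which amounts to the same thing). No gaps.
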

\begin{proof}
Take $D=\sum a_{n} n^{-s} \in \mathcal{H}_{+}$ and observe that, for each fixed $k$,  since $D_{k} \in \mathcal{H}^{2}$, \cite[Theorem~4.1]{hedenmalm1995hilbert} implies that $(D_{k})_{\chi}$ has a holomorphic extension (let us call it $f_{k,\chi}$) to $\mathbb{C}_{+}$. Define the set
\[
		A_{k} :=\big\{ \chi\in\Xi \colon (D_{k})_{\chi} \mbox{ extends holomorphically to } \C_{+} \Big\}
\]
and $A = \bigcap\limits_{k \in \mathbb{N}} A_{k}$ (note that  $\Xi \setminus A$ has null measure in $\Xi$). We fix now $\chi \in A$ and $k \in \mathbb{N}$ and observe that, for $s \in \mathbb{C}_{\frac{1}{2} + \frac{1}{k}}$ we have
\[
	D_{\chi}(s)=\sum_{n=1}^{\infty} a_{n} \chi(n) \frac{1}{n^{s}}
	= \sum_{n=1}^{\infty} \frac{a_n}{n^{\frac{1}{k}}}\chi(n) \frac{1}{n^{s-\frac{1}{k}}} 
	= (D_{k})_{\chi} \big(s-\tfrac{1}{k} \big)
	= f_{k,\chi}  \big(s-\tfrac{1}{k} \big)\,.
\]
Define now $g_{k,\chi} : \mathbb{C}_{\frac{1}{k}} \to \mathbb{C}$ by $g_{k,\chi} (s) = f_{k,\chi}  \big(s-\tfrac{1}{k} \big)$, that is clearly holomorphic and satisfies $g_{k,\chi} (s) = D_{\chi} (s)$ for every $s \in \mathbb{C}_{\frac{1}{2} + \frac{1}{k} }$. Note now that, if $k<l$ then $g_{k,\chi}$ and $g_{l,\chi}$ coincide on $\mathbb{C}_{\frac{1}{k}}$, and we can define $g_{\chi} : \mathbb{C}_{+} \to \mathbb{C}$ by $g_{\chi}(s) = g_{k,\chi}(s)$ if $s \in \mathbb{C}_{\frac{1}{k}}$. This is a holomorphic function that extends $D_{\chi}$ to $\mathbb{C}_{+}$.
\end{proof}
We can now give the announced alternative proof necessity in Theorem~\ref{teo: op cont}. 
		
\begin{proof}[Alternative proof of necessity in Theorem \ref{teo: op cont}]
The proof follows essentially the same lines as the proof of necessity in \cite[Theorem~B]{gordon1999composition}, so we only sketch it. Let us assume that $\phi:\C_{\frac{1}{2}}\rightarrow\C_{\frac{1}{2}}$ generates a continuous composition operator $C_{\phi}:\mathcal{H}_{+}\rightarrow\mathcal{H}_{+}$. Then, by Proposition~\ref{prop: 3}, for every $D \in \mathcal{H}_{+}$ and every $\chi\in\Xi$ we have
\begin{equation} \label{blackseeds}
			(D\circ\phi)_{\chi} = D_{\chi^{c_{0}}} \circ \phi_{\chi}
\end{equation}
on $\C_{\frac{1}{2}}$. If we can find $\chi \in \Xi$ for which $\phi_{\chi}$ extends to a holomorphic function on $\mathbb{C}_{+}$ satisfying 
\begin{equation} \label{nickcave}
		\phi_{\chi} (\mathbb{C}_{+}) \subseteq \mathbb{C}_{+} \, \text{ if } \, c_{0} \in  \mathbb{N} \, \text{ or } \,
		\phi_{\chi} (\mathbb{C}_{+}) \subseteq \mathbb{C}_{\frac{1}{2}} \text{ if } c_{0} =0 \,,
\end{equation}
then since $\phi = \big(\phi_{\chi}  \big)_{\chi^{-1}}$, \cite[Proposition~4.1]{gordon1999composition} yields our claim.\\
By assumption $D \circ \phi \in \mathcal{H}_{+}$ and, by Proposition~\ref{pretenders}, $(D \circ \phi)_{\chi}$ extends to a holomorphic function on $\mathbb{C}_{+}$ for almost every $\chi$. On the other hand, by Proposition~\ref{prop: extension}, $\phi_{\chi}$ extends to a holomorphic function on $\mathbb{C}_{+}$ for almost all $\chi$. \\
Suppose that  $c_{0} = 1,2, \ldots$. Since the $\chi\mapsto\chi^{c_{0}}$ preserves measure, the function $D_{\chi^{c_{0}}}$ extends holomorphically to $\C_{+}$ for almost every $\chi\in\Xi$. Then there is a set of full measure in $\Xi$ for which $D_{\chi}$, $(D\circ\phi)_{\chi}$ and $\phi_{\chi}$ can be extended analytically to $\mathbb{C}_{+}$ and then \eqref{blackseeds} holds on  $\mathbb{C}_{+}$. Take one such $\chi$ and, following exactly the same argument as in \cite[page~323]{gordon1999composition}, to see that \eqref{nickcave} holds in this case it suffices to find some $D \in \mathcal{H}_{+}$ for which $D_{\chi}$ does not extend analytically to any region larger than $\mathbb{C}_{+}$ for almost all $\chi$.\\
If $c_{0}=0$ then we can only assure that $D_{\chi^{c_{0}}} = D$ defines a holomorphic function on $\mathbb{C}_{\frac{1}{2}}$. The argument in \cite[page~324]{gordon1999composition} shows that 
if we can find a Dirichlet series $D \in \mathcal{H}_{+}$ that cannot be extended to a half-plane larger than $\mathbb{C}_{\frac{1}{2}}$ then \eqref{nickcave} holds in this case. \\
The series $\sum a_{n} n^{-s}$ given by $a_{n} = \frac{1}{\sqrt{n} \log n}$ if $n$ is prime and $0$ otherwise clearly belongs to $\mathcal{H}_{+}$ and in \cite[page~325]{gordon1999composition}
it is shown that satisfies the two required conditions.
\end{proof}

\section{Superposition operators} \label{sec:super}

If $\varphi : \mathbb{C} \to \mathbb{C}$ is some function and $f$ belongs to some space of functions (say $E$) defined on a subset of $\mathbb{C}$ with complex values we may consider $S_{\varphi} (f)= \varphi \circ f$. In this way we define an 
operator $S_{\varphi}$ (called superposition operator) on $E$. We draw now our attention to the existence of superposition operators between spaces of Dirichlet series. This was first considered in \cite{bayart2019composition}, where it is shown
that a function $\varphi$ defines a superposition operator $S_{\varphi} : \mathcal{H}^{\infty} \to \mathcal{H}^{\infty}$ if and only if $\varphi$ is entire, and $S_{\varphi}: \mathcal{H}^{p} \to \mathcal{H}^{q}$ is well defined if and only if 
$\varphi$ is a polynomial  degree less or equal to $\lfloor{\frac{p}{q}}\rfloor$. In this section we carry on with this study, considering operators defined on $\mathcal{H}^{\infty}_{+}$ or $\mathcal{H}_{+}$. We show that 
$S_{\varphi} : \mathcal{H}_{+}^{\infty} \to \mathcal{H}_{+}^{\infty}$ is well defined if and only if $\varphi$ is entire (as in the case of $\mathcal{H}^{\infty}$). The case of $\mathcal{H}_{+}$ is different. We give examples of entire functions $\varphi$ that are not polynomials but that define superposition operators $S_{\varphi} : \mathcal{H}_{+} \to \mathcal{H}_{+}$, but that there are entire functions that do not define superposition operators. 
We begin with a technical lemma.

\begin{lem} \label{itaca}
Let $\varphi : \mathbb{C}\setminus \{0\} \to \mathbb{C}$ be such that $f_{R} (s) = \varphi \big( \frac{R}{2^{s}} \big) \in \mathcal{D}$ for every $R>0$ and $\sup_{R} \sigma_{c}(f_{R}) = \sigma < \infty$.Then $\varphi $ extends to an entire function.
\end{lem}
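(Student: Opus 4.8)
The plan is to first upgrade $\varphi$ from a mere function to a holomorphic function on $\C\setminus\{0\}$, and then to read off from the Dirichlet series structure that its Laurent expansion at the origin has no principal part. For the holomorphy, fix $R>0$. Since $f_{R}\in\mathcal{D}$, it agrees with a convergent Dirichlet series and hence is holomorphic on $\C_{\sigma_{c}(f_{R})}\supseteq\C_{\sigma}$. The map $\Phi_{R}\colon s\mapsto R\,2^{-s}$ has nowhere-vanishing derivative $\Phi_{R}'(s)=-\log 2\,R\,2^{-s}$, so it is a local biholomorphism, and it sends $\C_{\sigma}$ onto the punctured disc $\{w\colon 0<|w|<R\,2^{-\sigma}\}$. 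Around any point $w_{0}$ of this punctured disc I would compose $f_{R}$ with a local holomorphic inverse of $\Phi_{R}$; since $\varphi$ is literally a function of $w$ (so all preimages of $w_{0}$ give the same value), this shows $\varphi=f_{R}\circ\Phi_{R}^{-1}$ is holomorphic near $w_{0}$. Thus $\varphi$ is holomorphic on $\{0<|w|<R\,2^{-\sigma}\}$ and, letting $R\to\infty$, on all of $\C\setminus\{0\}$.

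Next I would expand $\varphi$ in its Laurent series about the origin, $\varphi(w)=\sum_{k\in\mathbb{Z}}c_{k}w^{k}$, which converges absolutely on $0<|w|<\infty$. Substituting $w=R\,2^{-s}$ for $\re s>\sigma$ gives
\[
f_{R}(s)=\varphi\big(R\,2^{-s}\big)=\sum_{k\in\mathbb{Z}}c_{k}R^{k}\,2^{-ks}=\sum_{k\in\mathbb{Z}}c_{k}R^{k}\,(2^{k})^{-s},
\]
and the absolute convergence of the Laurent series at the point $w=R\,2^{-\re s}$ is precisely the statement that $\sum_{k}|c_{k}R^{k}|\,2^{-k\re s}<\infty$. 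Hence the right-hand side is an absolutely convergent two-sided general Dirichlet series with frequencies $\{k\log 2\colon k\in\mathbb{Z}\}$, the terms with $k<0$ carrying strictly negative frequencies.

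The final step is to compare this with the ordinary Dirichlet series representation $f_{R}(s)=\sum_{n\geq 1}a_{n}(R)\,n^{-s}$ furnished by $f_{R}\in\mathcal{D}$, whose frequencies $\{\log n\colon n\geq 1\}$ are all non-negative. Subtracting the two expansions yields a two-sided general Dirichlet series that converges to $0$ on $\C_{\sigma}$. By uniqueness of the coefficients of a general Dirichlet series — the coefficient attached to a frequency $\mu$ can be recovered as $\lim_{T\to\infty}\frac{1}{2T}\int_{-T}^{T}f_{R}(\sigma_{0}+it)\,e^{\mu(\sigma_{0}+it)}\,dt$ for $\sigma_{0}>\sigma$ — all coefficients must vanish. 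The strictly negative frequencies $k\log 2$ with $k<0$ occur only in the expansion coming from $\varphi$, so $c_{k}R^{k}=0$, i.e. $c_{k}=0$ for every $k<0$. Therefore $\varphi(w)=\sum_{k\geq 0}c_{k}w^{k}$ has no principal part, the singularity at the origin is removable, and $\varphi$ extends to an entire function.

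I expect the delicate point to be this last paragraph: one must verify that the formal substitution of the Laurent series produces a genuinely (absolutely) convergent general Dirichlet series, so that the uniqueness theorem is applicable, and then match the two frequency sets correctly — in particular, one must be certain that the strictly negative frequencies $k\log 2$ cannot be cancelled by any $n^{-s}$ term, which is exactly what forces the principal part to vanish. The passage from holomorphy on $\C\setminus\{0\}$ to entireness is then immediate.
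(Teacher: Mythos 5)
Your proof is correct, and the first half (holomorphy of $\varphi$ on $\mathbb{C}\setminus\{0\}$ via local inverses of $s\mapsto R\,2^{-s}$, letting $R\to\infty$) is exactly what the paper does. Where you diverge is in killing the singularity at the origin. The paper's route is shorter and softer: since $s\mapsto\varphi(1/2^{s})$ is a convergent Dirichlet series, it is bounded on some far half-plane $\mathbb{C}_{\theta}$, which says precisely that $\varphi$ is bounded on the punctured disc $\{0<|z|<1/2^{\theta}\}$; Riemann's removable singularity theorem then finishes immediately, with no need to discuss the Laurent coefficients at all. Your route instead identifies the principal part directly: you substitute the Laurent expansion to rewrite $f_{R}$ as an absolutely convergent two-sided general Dirichlet series with frequencies $k\log 2$, and compare against the given one-sided expansion with non-negative frequencies $\log n$ via the standard coefficient-extraction formula $\lim_{T\to\infty}\frac{1}{2T}\int_{-T}^{T}f_{R}(\sigma_{0}+it)e^{\mu(\sigma_{0}+it)}\,dt$ (valid on any line $\re s=\sigma_{0}>\sigma_{a}(f_{R})$ where both representations converge absolutely, by dominated convergence over the frequency set). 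Since negative frequencies occur only in the Laurent-induced expansion, $c_{k}=0$ for $k<0$. This is heavier machinery than the paper needs, but it buys slightly more: it shows the hypothesis forces the whole frequency content of $\varphi(R/2^{s})$ to live on $\{k\log 2\colon k\geq 0\}$, i.e.\ it identifies the Taylor coefficients of the extension rather than merely asserting its existence. Both arguments are complete; yours just trades Riemann's theorem for the uniqueness theorem for general Dirichlet series.
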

\begin{proof}
Fix some $R>0$, then the function $s \mapsto \varphi \big( \frac{R}{2^{s}}\big)$ defines a holomorphic function on $\mathbb{C}_{\sigma}$. Then, taking two different branches of the complex logarithm we have that $\varphi$ is holomorphic on $\mathbb{D} (0, R/2^{\sigma}) \setminus \{0\}$. Since $R>0$ was arbitrary, we have that $\varphi$ is holomorphic on $\mathbb{C} \setminus \{0\}$. Now, the Dirichlet series defined on $\mathbb{C}_{\sigma}$ by  $s \mapsto \varphi \big( \frac{1}{2^{s}}\big)$ is bounded on some half plane $\mathbb{C}_{\theta}$ and, then,
\[
	\sup_{|z| \leq \frac{1}{2^{\theta}}} |\varphi(z)| 
	= \sup_{s \in \C_{\theta}} \Big|\varphi \Big( \tfrac{1}{2^{s}} \Big) \Big| <\infty \,.
\]
Hence $0$ is an isolated singularity of a bounded function in $\mathbb{D}(0,\frac{1}{2^{\theta}})$. Hence $\varphi$ can be extended holomorphically to $0$ and this completes the proof.
\end{proof}

\begin{prop}
Let $\varphi : \mathbb{C} \to \mathbb{C}$ be any function. Then,
\begin{enumerate}
		\item \label{app1} $S_{\varphi} : \mathcal{H}_{+}^{\infty} \to \mathcal{H}_{+}^{\infty}$ is well defined if and only if $\varphi$ is entire,
		
		\item \label{app2} if $S_{\varphi} : \mathcal{H}_{+} \to \mathcal{H}_{+}$ is well defined, then $\varphi$ is entire.
\end{enumerate}
\end{prop}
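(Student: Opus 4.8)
The plan is to reduce both statements to the technical Lemma~\ref{itaca} by cleverly choosing Dirichlet series in the respective spaces whose composition with $\varphi$ forces $\varphi$ to be entire. The key observation is that Lemma~\ref{itaca} already does the heavy lifting: it says that if $\varphi(R/2^s)$ is a Dirichlet series with uniformly bounded abscissa of convergence for all $R>0$, then $\varphi$ extends to an entire function. So for each part, the task is to exhibit a suitable family of inputs in the domain space whose images under $S_\varphi$ land in the range space, and then extract from that membership the hypothesis of Lemma~\ref{itaca}.

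For part \eqref{app1}, suppose $S_\varphi:\mathcal{H}_+^\infty\to\mathcal{H}_+^\infty$ is well defined. The natural test functions are the translates of the single monomial $2^{-s}$: for each $R>0$ I would consider the Dirichlet series $D_R(s) = R\cdot 2^{-s}$, which clearly lies in $\mathcal{H}_+^\infty$ (indeed its abscissa of uniform convergence is $-\infty$). Then $S_\varphi(D_R)(s) = \varphi(R\,2^{-s}) = \varphi(R/2^{s})$ must again belong to $\mathcal{H}_+^\infty$, and in particular it is a Dirichlet series in $\mathcal{D}$ with $\sigma_u \le 0$, hence $\sigma_c(f_R)\le 0$ uniformly in $R$. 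This gives $\sup_R\sigma_c(f_R)\le 0<\infty$, so Lemma~\ref{itaca} applies and $\varphi$ is entire. The converse — that every entire $\varphi$ defines such an operator — is presumably handled by the fact (cited from \cite{bayart2019composition}) that entire functions give superposition operators on $\mathcal{H}^\infty$, combined with the translation structure: for $D\in\mathcal{H}_+^\infty$ each translate $D_\sigma\in\mathcal{H}^\infty$, and $(S_\varphi D)_\sigma$ can be related to $S_\varphi(D_\sigma)$, keeping us inside $\mathcal{H}^\infty$ for every $\sigma>0$, hence inside $\mathcal{H}_+^\infty$.

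For part \eqref{app2}, only the forward implication is claimed, so I would again feed in the functions $D_R(s)=R\,2^{-s}$, which also lie in $\mathcal{H}_+$ (since $(a_n)\in\ell_2$ trivially). If $S_\varphi:\mathcal{H}_+\to\mathcal{H}_+$ is well defined, then $f_R(s)=\varphi(R/2^s)\in\mathcal{H}_+\subseteq\mathcal{D}$, and membership in $\mathcal{H}_+$ forces $\sigma_a(f_R)\le\frac12$ by the abscissa bound established after the definition of $\mathcal{H}_+$. In particular $\sigma_c(f_R)\le\sigma_a(f_R)\le\frac12$ uniformly in $R$, so $\sup_R\sigma_c(f_R)\le\frac12<\infty$, and Lemma~\ref{itaca} again yields that $\varphi$ is entire.

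The main obstacle, \emph{I expect}, is not the extraction of entirety — which is clean once the test family $D_R$ is chosen — but rather verifying the converse in part \eqref{app1}, namely that every entire $\varphi$ genuinely produces an operator landing back in $\mathcal{H}_+^\infty$ rather than merely in $\mathcal{D}$. The subtlety is that $\varphi\circ D$ is \emph{a priori} only a holomorphic function; one must check it is represented by a Dirichlet series with $\sigma_u\le 0$, which requires controlling how composition interacts with the abscissa of uniform convergence uniformly across all translations. Here the cited characterization of superpositions on $\mathcal{H}^\infty$ and the translation-invariance of the $\mathcal{H}_+^\infty$ structure must be combined carefully; the rearrangement of $\varphi\circ D$ into Dirichlet-series form is the one genuinely technical point, analogous to the rearrangement carried out in the proof of Proposition~\ref{prop: 3}.
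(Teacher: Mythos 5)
Your argument is essentially the paper's: the test family $D_R(s)=R\,2^{-s}$ (which the paper leaves implicit when it says the entirety ``follows from Lemma~\ref{itaca}''), together with the abscissa bounds $\sigma_c\leq\sigma_u\leq 0$ in the $\mathcal{H}_{+}^{\infty}$ case and $\sigma_c\leq\sigma_a\leq\tfrac12$ from \eqref{sigma-aes} in the $\mathcal{H}_{+}$ case, is exactly what feeds the hypotheses of Lemma~\ref{itaca}; and the converse in \eqref{app1} is precisely the paper's translation argument, $\sup_{\re s>1/k}|\varphi\circ D(s)|=\sup_{\re s>0}|\varphi(D_k(s))|<\infty$, where the ``rearrangement into a Dirichlet series'' that you flag as the delicate point is already contained in the cited fact that $S_{\varphi}$ maps $\mathcal{H}^{\infty}$ into $\mathcal{H}^{\infty}$ (applied to each translate $D_k$, whose image under $S_\varphi$ is then a genuine Dirichlet series whose back-translate represents $\varphi\circ D$ on $\mathbb{C}_{1/k}$).

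The one step you omit concerns the value of $\varphi$ at the origin. The proposition allows $\varphi:\mathbb{C}\to\mathbb{C}$ to be an arbitrary function, and Lemma~\ref{itaca} only sees the restriction of $\varphi$ to $\mathbb{C}\setminus\{0\}$: its conclusion is that this restriction \emph{extends} to an entire function, which says nothing about whether $\varphi(0)$ agrees with the value of that extension. As written, your argument only shows that $\varphi$ coincides with an entire function away from $0$. The paper disposes of this at the outset by composing with a Dirichlet series in the domain space that attains the value $0$ with nonvanishing derivative at some point of its half-plane, so that $\varphi=(\varphi\circ D)\circ D^{-1}$ is holomorphic near the origin; combined with Lemma~\ref{itaca} this pins down $\varphi(0)$ and gives that $\varphi$ itself is entire. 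It is a one-line repair, but it is needed for the statement as phrased.
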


\begin{proof}
First note that, in both cases, if $S_{\varphi}$ is well-defined then $\varphi$ is holomorphic at the origin. Indeed, if $D(s):=-1 + 1^{-s}$ then $\varphi \circ D$ is holomorphic in $\mathbb{C}_{1/2}$ and then $\varphi(s)= \varphi \circ D \circ D^{-1} $ is holomorphic at $s=0$.
The fact that $\varphi$ is entire then follows from Lemma~\ref{itaca}. \\
It is only left to show that if $\varphi$ is entire, then  $S_{\varphi} : \mathcal{H}_{+}^{\infty} \to \mathcal{H}_{+}^{\infty}$ is well defined. Take, then, $D \in \mathcal{H}_{+}^{\infty}$ and fix $k \in \mathbb{N}$. Then
\[
	\sup_{\re(s)>\frac{1}{k}} |\varphi\circ D (s)| 
	= \sup_{\re(s)> \frac{1}{k}} |\varphi(D)(s)|
	=\sup_{\re(s)>0} \Big|\varphi \Big ( D \big(s+ \tfrac{1}{k} \big) \Big) \Big|
	=\sup_{\re(s)>0} |\varphi(D_{k}(s))| \,,
\]
and this supremum is finite because $D_{k} \in \mathcal{H}^{\infty}$ and the superposition operator is well defined from $\mathcal{H}^{\infty}$ to $\mathcal{H}^{\infty}$ (because $\varphi$ is entire). 
Since this 	holds fore very $k$ we conclude that $\varphi\circ D \in \mathcal{H}_{+}^{\infty}$.
\end{proof}

So the behaviour of superposition operators on $\mathcal{H}^{\infty}$ and on $\mathcal{H}^{\infty}_{+}$ is essentially the same. This is not the case when we look at superposition operators defined on 
$\mathcal{H}_{+}$. In this case, every polynomial (of any degree) defines a superposition operator. The reason for this is that (unlike for $\mathcal{H}^{p}$), the space $\mathcal{H}_{+}$ is an algebra. 

\begin{prop} \label{Fr alg}
The space $\mathcal{H}_{+}$ is a Fr\'echet algebra.
\end{prop}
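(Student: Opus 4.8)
The plan is to verify the two defining properties of a Fréchet algebra: since we already know that $\mathcal{H}_{+}$ is a Fréchet space, it remains to check that it is closed under the (Dirichlet) product and that this product is jointly continuous. For the latter, because the topology is generated by the increasing family $\{\Vert\cdot\Vert_{2,k}\}_{k}$, it suffices to produce, for each $k$, an index $m$ and a constant $C>0$ with $\Vert DE\Vert_{2,k}\le C\,\Vert D\Vert_{2,m}\,\Vert E\Vert_{2,m}$ for all $D,E\in\mathcal{H}_{+}$; this single estimate will simultaneously give closedness (finiteness of every seminorm of $DE$) and continuity.

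The first step is the observation that translation is multiplicative. Writing $D=\sum a_{n} n^{-s}$, $E=\sum b_{n} n^{-s}$ and $DE=\sum c_{n} n^{-s}$ with $c_{n}=\sum_{d\mid n}a_{d} b_{n/d}$, a direct computation with the Dirichlet convolution shows that $(DE)_{k}=D_{k} E_{k}$ for every $k$, so that $\Vert DE\Vert_{2,k}=\Vert D_{k} E_{k}\Vert_{\mathcal{H}^{2}}$.

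The core of the argument is then to bound $\Vert D_{k} E_{k}\Vert_{\mathcal{H}^{2}}$. Here the obstacle is that $\mathcal{H}^{2}$ is not itself a Banach algebra, so one cannot naively estimate the $\mathcal{H}^{2}$ norm of a product by the product of $\mathcal{H}^{2}$ norms. I would get around this by passing to $\mathbb{T}^{\infty}$: under the isometric identification $\mathcal{H}^{p}\cong H_{p}(\mathbb{T}^{\infty})$ the product of Dirichlet series corresponds to the pointwise product of the associated functions (the correspondence $n=\mathfrak{p}^{\alpha}\mapsto z^{\alpha}$ is multiplicative), and the product of two functions with spectrum contained in $\mathbb{N}_{0}^{(\mathbb{N})}$ again has spectrum in $\mathbb{N}_{0}^{(\mathbb{N})}$. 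Applying the Cauchy--Schwarz (i.e. the $L^{4}\times L^{4}\to L^{2}$ Hölder) inequality on $\mathbb{T}^{\infty}$ then gives $\Vert D_{k} E_{k}\Vert_{\mathcal{H}^{2}}\le \Vert D_{k}\Vert_{\mathcal{H}^{4}}\,\Vert E_{k}\Vert_{\mathcal{H}^{4}}=\Vert D\Vert_{4,k}\,\Vert E\Vert_{4,k}$, and these quantities are finite because $D,E\in\mathcal{H}_{+}$ (Remark~\ref{fok}).

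Finally I would spend the extra room provided by Proposition~\ref{obs: igualdal de espacios}: with $p=2$, $q=4$ it yields $\Vert D\Vert_{4,k}\le C_{k,2,4}\Vert D\Vert_{2,2k}$, and combining the above gives
\[
\Vert DE\Vert_{2,k}=\Vert D_{k} E_{k}\Vert_{\mathcal{H}^{2}}\le \Vert D\Vert_{4,k}\,\Vert E\Vert_{4,k}\le C_{k,2,4}^{2}\,\Vert D\Vert_{2,2k}\,\Vert E\Vert_{2,2k},
\]
which is exactly the desired estimate with $m=2k$ and $C=C_{k,2,4}^{2}$. The delicate point is precisely this interplay between the failure of $\mathcal{H}^{2}$ to be an algebra and the gain obtained by trading a higher exponent ($\mathcal{H}^{4}$) for a deeper translation ($\Vert\cdot\Vert_{2,2k}$): the loss incurred in the Hölder step must be, and is, absorbed by the comparison of seminorms in Proposition~\ref{obs: igualdal de espacios}.
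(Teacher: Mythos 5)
Your proof is correct, and it reaches the same key estimate as the paper (a bound of $\Vert DE\Vert_{2,k}$ by a constant times $\Vert D\Vert_{2,2k}\Vert E\Vert_{2,2k}$) but by a somewhat different route. The paper first proves the inequality for Dirichlet \emph{polynomials}: it uses Proposition~\ref{obs: igualdal de espacios} in the form $\Vert PQ\Vert_{2,m}\le C_{m}\Vert PQ\Vert_{1,2m}$ and then applies Cauchy--Schwarz to the limit of averaged integrals over vertical segments, i.e.\ the $L^{2}\times L^{2}\to L^{1}$ estimate on the \emph{product}; since $\Vert\cdot\Vert_{1,2m}$ is only defined on polynomials via that limit, the paper must then run a density argument (the products $P_{j}Q_{j}$ form a Cauchy sequence, converge to some $D\in\mathcal{H}_{+}$, and pointwise evaluation on $\mathbb{C}_{1/2}$ identifies $D$ with $D_{1}D_{2}$). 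You instead push the \emph{factors} up to $L^{4}$ via the Bohr lift to $\mathbb{T}^{\infty}$, where the product is an honest pointwise product of $H_{4}$ functions whose spectrum stays in $\mathbb{N}_{0}^{(\mathbb{N})}$, and the $L^{4}\times L^{4}\to L^{2}$ H\"older inequality applies directly to arbitrary elements of $\mathcal{H}_{+}$; this dispenses with the approximation-by-polynomials step entirely. In both arguments the loss from H\"older is absorbed by the same mechanism, the seminorm comparison of Proposition~\ref{obs: igualdal de espacios} trading a change of exponent for a doubling of the translation index, so the conceptual core is identical; your version is arguably cleaner because it avoids the Cauchy-sequence bookkeeping, at the cost of invoking the identification $\mathcal{H}^{p}\cong H_{p}(\mathbb{T}^{\infty})$ and the (standard, but worth a sentence) fact that the Bohr lift carries the Dirichlet convolution to the pointwise product, with the convolution of the coefficient sequences converging absolutely by Cauchy--Schwarz. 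The identity $(DE)_{k}=D_{k}E_{k}$ you use is correct since $d^{1/k}(n/d)^{1/k}=n^{1/k}$ makes the translation multiplicative on Dirichlet convolutions.
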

\begin{proof}
Fix $m$ and take two Dirichlet polynomials
	$P$ and $Q$. Then $PQ$ is again a Dirichlet polynomial and (recall \eqref{seminormas})
\begin{align*}
	{}\Vert PQ\Vert_{2,m}&\leq C_{m} \Vert PQ \Vert_{1,2m}
	= C_{m} \Vert (PQ)_{2m} \Vert_{1}
	= C_{m} \lim_{T \to \infty} \frac{1}{2T}\int_{-T}^{T} |P_{2m}(it) Q_{2m}(it)| \mathrm{d}t\\
	& \leq C_{m} \lim_{T \to \infty} \frac{1}{2T} \Bigg( \int_{-T}^{-T} |P_{2m}(it)|^2 \mathrm{d}t \Bigg)^{\frac{1}{2}} 
	\Bigg( \int^{T}_{-T} |Q_{2m}(it)|^2 \mathrm{d}t \Bigg)^{\frac{1}{2}}\\
	& = C_{m} \lim_{T \to \infty} \Bigg( \frac{1}{2T} \int^{T}_{-T} |P_{2m}(it)|^2 \mathrm{d}t \Bigg)^{\frac{1}{2}}
	\lim_{R \to \infty} \Bigg( \frac{1}{2R} \int^{R}_{-R} |Q_{2m}(ir)|^2 \mathrm{d}r \Bigg)^{\frac{1}{2}}\\
	& = C_{m} \Vert P_{2m} \Vert_2\Vert Q_{2m}\Vert_2
	= C_{m} \Vert P \Vert_{2,2m} \Vert Q \Vert_{2,2m} \,.
\end{align*}
Take now two Dirichlet series $D_{1}, D_{2} \in \mathcal{H}_{+}$ and choose sequences of Dirichlet polynomials $(P_{j})_{j}$ and $(Q_{j})_{j}$ converging to $D_{1}$ and $D_{2}$, respectively. Our first step is to show that $(P_{j}Q_{j})_{j}$ is a Cauchy sequence. Note that for each $k$ we may find $M_{k}$ so that $\Vert P_{j} \Vert_{2,k} \leq  M_{k}$ and $\Vert Q_{j} \Vert_{2,k} \leq  M_{k}$. With this at hand we immediately have, for each $m$
\begin{multline*}
	\Vert P_{j}Q_{j} - P_{i} Q_{i} \Vert_{2,m} 
	\leq C_{m} ( \Vert P_{j} \Vert_{2,2m} \Vert Q_{j} - Q_{i} \Vert_{2,2m} + \Vert P_{i} - P_{j} \Vert_{2,2m} \Vert Q_{i} \Vert_{2,2m} ) \\
	\leq C_{m} M_{2m} (\Vert Q_{j} - Q_{i} \Vert_{2,2m} + \Vert P_{i} - P_{j}\Vert_{2,2m} ) \,.
\end{multline*}
Hence $(P_{j}Q_{j})_{j}$ is a Cauchy sequence and then converges to some $D \in \mathcal{H}_{+}$. Given $s \in \mathbb{C}_{\frac{1}{2}}$ we have
\[
	D(s) = \lim_{j} (P_{j}Q_{j}) (s) = \lim_{j} P_{j} (s) \lim_{j} Q_{j} (s) = D_{1}(s) D_{2}(s) \,,
\]
and this shows that $D_{1}D_{2} \in \mathcal{H}_{+}$, and a standard argument shows that
\[
	\Vert D_{1} D_{2} \Vert_{2,m} \leq C_{m}M_{2m} \Vert D_{1} \Vert_{2,2m} \Vert D_{2} \Vert_{2,2m} \,.
\]
\end{proof}
\begin{cor}
If $\varphi :\mathbb{C} \to \mathbb{C}$ is a polynomial, then the superposition operator $S_{\varphi} : \mathcal{H}_{+} \to \mathcal{H}_{+}$ is well defined.
\end{cor}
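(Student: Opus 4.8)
The plan is to deduce this directly from Proposition~\ref{Fr alg}, since superposition by a polynomial amounts to taking products and linear combinations, operations under which a Fr\'echet algebra is closed. Writing $\varphi(z) = \sum_{j=0}^{d} c_{j} z^{j}$, for a Dirichlet series $D \in \mathcal{H}_{+}$ I would define the candidate image as the finite sum $\sum_{j=0}^{d} c_{j} D^{j}$, where the powers $D^{j}$ are computed in the algebra $\mathcal{H}_{+}$ (with $D^{0}$ interpreted as the constant Dirichlet series $1 = 1^{-s}$).

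First I would note that each $D^{j}$ lies in $\mathcal{H}_{+}$: this is immediate by induction from the fact, established in Proposition~\ref{Fr alg}, that $\mathcal{H}_{+}$ is closed under multiplication. Since $\mathcal{H}_{+}$ is a vector space containing the constants, the finite linear combination $\sum_{j=0}^{d} c_{j} D^{j}$ again belongs to $\mathcal{H}_{+}$.

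The one point that requires care is checking that this Dirichlet series genuinely represents the superposition $\varphi \circ D$, rather than some merely formal object. Here I would invoke the computation at the end of the proof of Proposition~\ref{Fr alg}, which shows that the product of two elements of $\mathcal{H}_{+}$ corresponds on $\mathbb{C}_{1/2}$ to the pointwise product of the holomorphic functions they define (recall that every $D \in \mathcal{H}_{+}$ converges absolutely on $\mathbb{C}_{1/2}$ by \eqref{sigma-aes}). Iterating, the Dirichlet series $D^{j}$ defines the function $s \mapsto D(s)^{j}$ on $\mathbb{C}_{1/2}$, and therefore $\sum_{j=0}^{d} c_{j} D^{j}$ defines $s \mapsto \varphi\big(D(s)\big) = (\varphi \circ D)(s)$ there. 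Thus $S_{\varphi}(D) = \varphi \circ D \in \mathcal{H}_{+}$, and the operator is well defined. I do not expect any genuine obstacle beyond this bookkeeping, since the algebra property of Proposition~\ref{Fr alg} does all the essential work.
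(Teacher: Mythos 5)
Your proposal is correct and is exactly the argument the paper intends: the corollary is stated as an immediate consequence of Proposition~\ref{Fr alg}, with the powers $D^{j}$ and their linear combination handled by the algebra structure, and the identification of the product in $\mathcal{H}_{+}$ with the pointwise product on $\mathbb{C}_{\frac{1}{2}}$ already established at the end of that proposition's proof. Your extra care in checking that the formal series $\sum_{j=0}^{d} c_{j} D^{j}$ genuinely represents $\varphi\circ D$ is a point the paper leaves implicit, but it is the right thing to verify and you verify it correctly.
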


We address now the question of whether or not there are entire functions other than polynomials that define a superposition operator on  $\mathcal{H}_{+}$.

\begin{teo} \label{marianne}
There are entire functions that are not polynomials that define a superposition operator on $\mathcal{H}_{+}$, but not every entire function does so.
\end{teo}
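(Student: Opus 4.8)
The statement has two halves, and I would establish each by exhibiting a single witness: one non-polynomial entire function whose superposition operator is well defined, and one entire function whose superposition operator is not.

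\textbf{Positive half.} Write $\varphi(z)=\sum_{k\ge0}c_kz^k$ with every $c_k\ne0$, so $\varphi$ is entire and non-polynomial as soon as $\limsup_k|c_k|^{1/k}=0$. The plan is to choose $(c_k)$ decaying so fast that $\sum_k c_kD^k$ converges absolutely in the Fr\'echet space $\mathcal{H}_+$ for every $D\in\mathcal{H}_+$. Since $\mathcal{H}_+$ is complete and an algebra (Proposition~\ref{Fr alg}), each $D^k$ lies in $\mathcal{H}_+$, and it suffices to prove $\sum_k|c_k|\,\Vert D^k\Vert_{2,m}<\infty$ for every $m$; the limit is then $\varphi\circ D$ because point evaluations at $s\in\C_{\frac12}$ are continuous on $\mathcal{H}_+$ (using $\sigma_a(D)\le\frac12$ from \eqref{sigma-aes}, one has $|F(s)|\le\Vert F\Vert_{2,m}\,\zeta(2\re s-\tfrac2m)^{1/2}$ for $\re s>\tfrac12+\tfrac1m$) while $\sum_k c_kD(s)^k=\varphi(D(s))$ pointwise.

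The crucial point is to bound $\Vert D^k\Vert_{2,m}$ \emph{without} letting the seminorm index grow. Since translation is a ring homomorphism, $(D^k)_m=(D_m)^k$; passing to $H_2(\mathbb{T}^\infty)$ and applying H\"older to a product of $k$ equal factors gives
\[
\Vert D^k\Vert_{2,m}=\big\Vert (D_m)^k\big\Vert_{\mathcal H^2}\le \big\Vert D_m\big\Vert_{\mathcal H^{2k}}^{\,k}=\Vert D\Vert_{2k,m}^{\,k},
\]
and then Proposition~\ref{obs: igualdal de espacios} (with $p=2$, $q=2k$) yields $\Vert D\Vert_{2k,m}\le C_{m,2,2k}\,\Vert D\Vert_{2,2m}$. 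Each factor defining $C_{m,2,2k}$ is at most $B_m:=(1-2^{-1/(2m)})^{-1}$ and there are at most $k^m$ of them, so $\Vert D^k\Vert_{2,m}\le B_m^{\,k^{m+1}}\,\Vert D\Vert_{2,2m}^{\,k}$. Taking for instance $c_k=e^{-k^k}$ makes $\sum_k|c_k|\,B_m^{\,k^{m+1}}R^k<\infty$ for every $m$ and every $R=\Vert D\Vert_{2,2m}$, because $k^k$ eventually dominates $k^{m+1}\log B_m+k\log R$. This exhibits a non-polynomial entire $\varphi$ with $S_\varphi:\mathcal{H}_+\to\mathcal{H}_+$ well defined.

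\textbf{Negative half.} Here I would test a well-chosen $\varphi$ against the single series $D=\sum_{\mathfrak p}\mathfrak p^{-1/2}\mathfrak p^{-s}$, which lies in $\mathcal{H}_+$ since $\sum_{\mathfrak p}\mathfrak p^{-1-\varepsilon}<\infty$ for all $\varepsilon>0$. Assuming $S_\varphi$ well defined, $\varphi\circ D\in\mathcal{H}_+\subseteq\mathcal{D}$; on $\C_{1+\delta}$ (where $D$ is bounded and $\sum_k|c_k|\Vert D\Vert^k$ converges absolutely, $\Vert D\Vert:=\sum_{\mathfrak p}\mathfrak p^{-3/2-\delta}$) one may expand $\varphi\circ D=\sum_k c_kD^k$ and rearrange into a Dirichlet series $\sum_n e_nn^{-s}$, whose coefficients then coincide with those of $\varphi\circ D$ by uniqueness. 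For a squarefree $n$ that is a product of exactly $k$ distinct primes, only $D^k$ contributes, through the $k!$ orderings of those primes, so $e_n=c_k\,k!\,n^{-1/2}$. Restricting the $m$-th seminorm to squarefree indices gives
\[
\Vert\varphi\circ D\Vert_{2,m}^2\ \ge\ \sum_{k}|c_k|^2(k!)^2\,e_k(\sigma),\qquad \sigma=1+\tfrac2m,\quad e_k(\sigma)=\sum_{\mathfrak p_1<\cdots<\mathfrak p_k}(\mathfrak p_1\cdots\mathfrak p_k)^{-\sigma},
\]
so forcing divergence for one value of $m$ yields $\varphi\circ D\notin\mathcal{H}_+$, a contradiction.

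\textbf{Number theory and the main obstacle.} The decisive and most delicate step is a lower bound for $e_k(\sigma)$ showing that the factorial amplification $(k!)^2$ wins. Selecting the $k$ primes from a dyadic block $[y,2y]$, which by Chebyshev contains $\gg y/\log y$ primes, and choosing $y\asymp k\log k$ so that roughly $2k$ primes are available (each product being $\le(2y)^k$), gives
\[
(k!)^2\,e_k(\sigma)\ \gg\ \Big(\,\mathrm{const}\cdot k^{\,1-2/m}(\log k)^{-1-2/m}\Big)^{k}.
\]
Then with $\varphi(z)=\sum_k k^{-k/4}z^k$ (entire and non-polynomial) and any $m>4$, the factor $|c_k|^2=k^{-k/2}$ leaves a base growing like $k^{(1/2-2/m)k}\to\infty$, so the terms diverge. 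Balancing the \emph{size} against the \emph{number} of available primes so as to beat $(k!)^2$ — rather than relying on any soft estimate — is the heart of the argument; everything else is bookkeeping. The two halves together prove the theorem.
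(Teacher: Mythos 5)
Your proof is correct; the two halves compare differently with the paper's. The positive half is essentially the paper's own argument: Lemma~\ref{comparacion potencias} there establishes $\Vert D^k\Vert_{2,m}\le C_m e^{b_m k^{2m}}\Vert D\Vert_{2,4m}^k$ by writing $\Vert P^k\Vert_{2,m}\le C_m\Vert P^k\Vert_{1,2m}=C_m\Vert P\Vert_{k,2m}^k$ and then invoking the translation operator of Proposition~\ref{obs: igualdal de espacios}, while you reach the (slightly sharper) bound $B_m^{k^{m+1}}\Vert D\Vert_{2,2m}^k$ via the identity $\Vert(D_m)^k\Vert_{\mathcal{H}^2}=\Vert D_m\Vert_{\mathcal{H}^{2k}}^k$ followed by the same translation estimate; both arguments then take $c_k=e^{-k^k}$, which is exactly Example~\ref{exam: funcion superposicion}. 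The negative half is where you genuinely diverge. The paper first proves an abstract criterion (Proposition~\ref{andueza}, via the closed graph theorem): $D$ composes with every entire function if and only if $\Vert D^k\Vert_{2,m}^{1/k}$ stays bounded for each $m$; it then tests $D=\zeta(s+\tfrac12)$, evaluating the divisor function $d_k(n_k)=k^{\pi(x_k)}$ at the primorial $n_k=\prod_{\mathfrak{p}\le x_k}\mathfrak{p}$, and concludes nonconstructively that some entire function must fail. You instead work with the prime-supported series $\sum_{\mathfrak{p}}\mathfrak{p}^{-1/2}\mathfrak{p}^{-s}$, whose powers feed each squarefree $n$ with exactly $k$ prime factors the single amplified coefficient $c_k\,k!\,n^{-1/2}$ (so distinct $k$'s never interfere at a given $n$), and you exhibit the explicit failing function $\sum_k k^{-k/4}z^k$. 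This buys concreteness and dispenses with the closed graph theorem; what it gives up is the reusable if-and-only-if criterion, which the paper exploits again in the last proposition of that section (where an explicit failing function $\sum_k e^{-k^C}z^k$, $0<C<2$, including $\exp$, is also produced by a computation close in spirit to yours, but still tested against $\zeta(s+\tfrac12)$). Your dyadic-block count --- about $2k$ primes of size $\asymp k\log k$, giving $(k!)^2e_k(1+\tfrac2m)\gg\bigl(\mathrm{const}\cdot k^{1-2/m}(\log k)^{-1-2/m}\bigr)^k$ --- is a correct replacement for the paper's primorial computation, and the final bookkeeping (taking $m>4$ so that $\tfrac12-\tfrac2m>0$) checks out.
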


There are two questions here to be answered. We deal with each one of them separately. The key point for the first question (on the existence of entire functions defining superposition operators that are not polynomials) is to have a good control of the seminorms of the powers of a given Dirichlet series. Let us recall that if $\pi (x) = \sum_{\substack{\mathfrak{p} \leq x \\ \mathfrak{p} \text{ prime}}} 1$ then, by the prime number theorem 
\begin{equation} \label{pi}
\lim_{x \to \infty} \frac{\pi (x)}{\big(\frac{x}{\log x}\big)} =1 \,.
\end{equation}

\begin{lem} \label{comparacion potencias}
Given $m \in \mathbb{N}$ there exist $C_{m}, b_{m} >1$ so that 
\[
	\Vert D^{k} \Vert_{2,m} \leq C_{m} e^{b_{m} k^{2m+1}} \Vert D \Vert_{2,4m}^{k}
\]
for every $D \in \mathcal{H}_{+}$ and every $k \in \mathbb{N}$.
\end{lem}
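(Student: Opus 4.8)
The plan is to reduce the estimate to a \emph{single} use of the integrability-improvement bound of Proposition~\ref{obs: igualdal de espacios}, rather than iterating the submultiplicative inequality of Proposition~\ref{Fr alg}: the latter doubles the seminorm index at each step, so it is useless for controlling a $k$-th power. The key observation is the identity
\[
	\Vert D^{k} \Vert_{2,m} = \Vert D \Vert_{2k,m}^{k} \,,
\]
valid for every $D \in \mathcal{H}_{+}$ and every $k$. Indeed, translation commutes with taking powers, so $(D^{k})_{m} = (D_{m})^{k}$; and since $D_{m}$ is a positive translate of $D_{2m}\in\mathcal{H}^{2}$, Remark~\ref{fok} gives $D_{m} \in \mathcal{H}^{q}$ for every $q<\infty$. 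Writing $g$ for the Bohr lift of $D_{m}$ (an element of $H_{2k}(\mathbb{T}^{\infty})$) and using that the Bohr lift is multiplicative and isometric from $\mathcal{H}^{2}$ onto $H_{2}(\mathbb{T}^{\infty})$,
\[
	\Vert D^{k} \Vert_{2,m} = \big\Vert (D_{m})^{k} \big\Vert_{\mathcal{H}^{2}} = \Vert g^{k} \Vert_{L_{2}(\mathbb{T}^{\infty})} = \Vert g \Vert_{L_{2k}(\mathbb{T}^{\infty})}^{k} = \Vert D_{m} \Vert_{\mathcal{H}^{2k}}^{k} = \Vert D \Vert_{2k,m}^{k} \,.
\]

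The second step is to compare $\Vert D \Vert_{2k,m}$ with $\Vert D \Vert_{2,4m}$. Applying Proposition~\ref{obs: igualdal de espacios} with $p=2$, $q=2k$ and seminorm index $m$ gives
\[
	\Vert D \Vert_{2k,m} \leq C_{m,2,2k}\, \Vert D \Vert_{2,2m} \leq C_{m,2,2k}\, \Vert D \Vert_{2,4m} \,,
\]
the last step because the seminorms increase in the index. Reading the constant off the proof of that proposition, the threshold $j_{0}$ is determined by $\mathfrak{p}_{j}^{-1/(2m)} < \sqrt{2/(2k)} = k^{-1/2}$, i.e.\ $\mathfrak{p}_{j} > k^{m}$, so $j_{0} = \pi(k^{m})$ and
\[
	C_{m,2,2k} = \prod_{\mathfrak{p} \leq k^{m}} \frac{1}{1 - \mathfrak{p}^{-1/(2m)}} \,.
\]

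The heart of the matter, and the step I expect to require the most care, is bounding $C_{m,2,2k}^{k}$. Taking logarithms and using that $x \mapsto \log\frac{1}{1-x}$ is increasing while $\mathfrak{p} \mapsto \mathfrak{p}^{-1/(2m)}$ is decreasing, every factor is dominated by the one coming from $\mathfrak{p}=2$, so that
\[
	\log C_{m,2,2k} \leq \pi(k^{m})\, \log\frac{1}{1 - 2^{-1/(2m)}} \leq L_{m}\, k^{m} \,,
\]
where $L_{m} := \log\frac{1}{1 - 2^{-1/(2m)}}$ and I use only the trivial bound $\pi(k^{m}) \leq k^{m}$ (the prime number theorem \eqref{pi} would save a logarithmic factor, but this is unnecessary). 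Multiplying by $k$ and using $m+1 \leq 2m+1$ with $k\geq 1$ yields $\log C_{m,2,2k}^{k} \leq L_{m} k^{m+1} \leq L_{m} k^{2m+1}$. Combining the three steps,
\[
	\Vert D^{k} \Vert_{2,m} = \Vert D \Vert_{2k,m}^{k} \leq C_{m,2,2k}^{k}\, \Vert D \Vert_{2,4m}^{k} \leq e^{L_{m} k^{2m+1}}\, \Vert D \Vert_{2,4m}^{k} \,,
\]
so the lemma holds with $b_{m} = L_{m}$ and any $C_{m}>1$; one checks $L_{m} \geq L_{1} > 1$ for every $m$, so $b_{m}>1$ as required. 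The generous exponent $2m+1$ leaves ample slack, which is precisely why the crude prime-counting bound suffices and no sharp asymptotics are needed.
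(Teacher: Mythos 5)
Your proof is correct and rests on the same engine as the paper's: the identity $\Vert f^{k}\Vert_{p}=\Vert f\Vert_{pk}^{k}$ combined with the explicit constant from Proposition~\ref{obs: igualdal de espacios} applied with $q$ of order $k$, followed by counting the primes up to a fixed power of $k$. Your version is slightly leaner --- working at the $L^{2}$ level directly means only one application of Proposition~\ref{obs: igualdal de espacios} and no density argument (indeed $\Vert D\Vert_{2,2m}$ already suffices on the right-hand side), and the trivial bound $\pi(x)\le x$ replaces the prime number theorem --- but the route is essentially the paper's.
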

\begin{proof}
Let $m,k \in \mathbb{N}$ and choose $j_{k,m}$ to be the smallest natural number so that 	$\mathfrak{p}_{j_{k,m}}^{\frac{-1}{4m}}\leq\sqrt{\frac{2}{k}}$ (recall that $\mathfrak{p}_{j_{k,m}}$ is the $j_{k,m}$-th prime number). Then, for every Dirichlet polynomial $P$ we have (recall the proof of Proposition~\ref{obs: igualdal de espacios})
\begin{align*}
	\Vert P^k\Vert_{2,m} & \leq C_{m}\Vert P^k \Vert_{1,2m} 
	= C_{m} \lim_{T \to \infty} \frac{1}{2T} \int_{-T}^{T} |P_{2m}(it)|^k \mathrm{d}t \\
	& = C_{m} \Bigg( \lim_{T \to \infty} \frac{1}{\sqrt[k]{2T}} \Bigg( \int_{-T}^{T} |P_{2m}(it)|^{k} \mathrm{d} t \Bigg)^{\frac{1}{k}}\Bigg)^k
	= C_{m} \Vert P_{2m} \Vert_{k}^{k} 
	= C_{m} \Vert P \Vert_{k,2m}^{k}\\
	& \leq C_{m} \Bigg(\prod_{j=1}^{j_{k,m}} \frac{1}{1-\mathfrak{p}_{j}^{\frac{-1}{4m}}} \Bigg)^{k} \Vert P \Vert_{2,4m}^{k} \,.
\end{align*}
Take now $D \in \mathcal{H}_{+}$ and choose a sequence of Dirichlet polynomials $(P_{i})_{i}$ converging to $D$. Then, for each fixed $k$, the sequence $(P_{i}^{k})_{i}$ converges to $D^{k}$ (check again the proof of Proposition~\ref{Fr alg}) and, then, 
\[
	\Vert D^{k}\Vert_{2,m}
	= \lim_{i} \Vert P_{i}^k\Vert_{2,m}
	\leq \lim_{i} C_{m} \Bigg( \prod_{j=1}^{j_{k,m}} \frac{1}{1-\mathfrak{p}_{j}^{\frac{-1}{4m}}} \Bigg)^{k} \Vert P_i \Vert_{2,4m}^{k}
	= C_{m} \Bigg( \prod_{j=1}^{j_{k,m}} \frac{1}{1-\mathfrak{p}_{j}^{\frac{-1}{4m}}} \Bigg)^{k} \Vert D \Vert_{2,4m}^{k}
\]
for every $m$. As a straightforward consequence of \eqref{pi} we can find $A_{m} >1$ so that
$\pi\Big(\big(\frac{k}{2}\big)^{2m} \Big) \leq A_{m} k^{2m}$ for every $k$. 
Taking $c_{m} =-\log(2^{\frac{1}{4m}}-1)+\frac{1}{4m}\log(2)$, we have $\frac{1}{1-x} \leq e^{x+c_m}$ for every  $x \in \Big[ 0,\frac{1}{2^{\frac{1}{4m}}} \Big]$. Then, taking $b_{m} = c_{m} + A_{m}$ we get
\[
	\prod_{j=1}^{j_{k,m}} \frac{1}{1-\mathfrak{p}_{j}^{\frac{-1}{4m}}} 
	\leq \prod_{j=1}^{j_{k,m}} e^{\mathfrak{p}_{j}^{\frac{-1}{4m}}+c_m}
	= e^{c_{m} j_{k,m}} e^{\sum\limits_{j=1}^{j_{k,m}} \frac{1}{\mathfrak{p}_{j}^{\frac{1}{4m}}}} 
	\leq e^{c_{m} j_{k,m}} e^{A_{m}\frac{j_{k,m}^{1-\frac{1}{4m}}}{ \log(j_{k,m})}}
	\leq e^{b_{m}k^{2m}}e^{A_{m}k^{2m}}
	=e^{k^{2m} b_{m}}\,. \qedhere
\]
\end{proof}

Now we can already answer our first question in Theorem~\ref{marianne}, giving an example of an entire function that is not a polynomial and that produces a well defined superposition operator on $\mathcal{H}_{+}$.

\begin{ej} \label{exam: funcion superposicion}
The entire function given by $\varphi(z)=\sum\limits_{k=0}^{\infty} \frac{1}{e^{k^k}}z^k$ defines an superposition operator on $\mathcal{H}_{+}$. Fix $m\in\mathbb{N}$ and $D\in\mathcal{H}_{+}$. Then we have
\[
	\Big\Vert \sum_{k=N}^{M} \frac{1}{e^{k^k}}D^{k} \Big\Vert_{2,m}
	\leq \sum_{k=N}^{M} \frac{1}{e^{k^k}} \Vert D^{k} \Vert_{2,m} 
	\leq C_{m} \sum_{k=N}^{M} \frac{\Big( e^{b_{m}k^{2m}} \Vert D\Vert_{2,4m} \Big)^k}{e^{k^k}}
	= C_{m} \sum_{k=N}^{M} \bigg( \frac{e^{b_{m}k^{2m}} \Vert D \Vert_{2,4m}}{e^{k^{k-1}}} \bigg)^{k}
\]
for every $M>N$. Since $\frac{e^{\widetilde{b}_{m}k^{2m}}\Vert D\Vert_{2,4m}}{{e^{k^{k-1}}}}<1$ for big enough $k$, the latter term tends to $0$ as $M$ and $N$ go to $\infty$. Then $\sum\limits_{k=0}^{N}{\frac{1}{e^{k^{k}}}D^{k}}$ is a Cauchy sequence in $\mathcal{H}_{+}$ and therefore 
converges to some $\widetilde{D} \in \mathcal{H}_{+}$. In particular this implies $\sum\limits_{k=0}^{N}{\frac{1}{e^{k^{k}}}D^{k}(s)}\rightarrow\widetilde{D}(s)$ for every $s\in\mathbb{C}_{\frac{1}{2}}$. On the other hand, if $s\in\mathbb{C}_{\frac{1}{2}}$, then $\sum\limits_{k=0}^{N} \frac{1}{e^{k^{k}}}D^{k}(s) \rightarrow \sum\limits_{k=0}^{\infty} \frac{1}{e^{k^{k}}}D^{k}(s)=\varphi(D)(s)$.
This shows that $\varphi(D)\in\mathcal{H}_{+}$ and $S_{\varphi}$ is a well defined superposition operator on $\mathcal{H}_{+}$.
\end{ej}

This example settles the first part of Theorem~\ref{marianne}. In order to address the second part (if every entire function  $\varphi$ defines a superposition operator $S_{\varphi}:\mathcal{H}_{+} \to \mathcal{H}_{+}$)
we change slightly our perspective. 
Let $H(\C)$ be  the space of entire functions endowed with the topology of uniform convergence on compact sets.
Given a Dirichlet series $D \in \mathcal{H}_{+}$, we say that $D$ is of composition  in $\mathcal{H}_{+}$ if  $\varphi \circ D$ remains in $\mathcal{H}_{+}$, for every entire function $\varphi$ (in other words, the operator $C_D: H(\C) \to \mathcal{H}_{+}$ given by $\varphi \mapsto \varphi \circ D$ is well defined). 
It is plain that every entire function defines a superposition operator if and only if every Dirichlet series in $\mathcal{H}_{+}$ is of composition. So, our goal in order to answer (in the negative) the second question in Theorem~\ref{marianne} is to find a Dirichlet series in $\mathcal{H}_{+}$ that is not of composition.\\
Let us suppose that $D$ is such that the operator $C_D$ is well defined and let us see that it is also continuous. Take a sequence 
$(\varphi_{m})\subseteq H(\mathbb{C})$ converging  to some $\varphi\in H(\mathbb{C})$ and assume that $C_D(\varphi_m)= \varphi_{m}\circ{D}$ converges (in $\mathcal{H}_{+}$) to $\widetilde{D}$. On the one hand,  since $\varphi_{m}\to{\varphi}$ in $H(\mathbb{C})$ then $\varphi_{m}(D(s))\to{\varphi(D(s))}$ for every $s \in \mathbb{C}_{\frac{1}{2}}$. On the other hand $\varphi_{m}\circ{D}\to\widetilde{D}$ in $\mathcal{H}_{+}$ so in particular $\varphi_{m}\circ{D}(s)\to\widetilde{D}(s)$ for all $s\in\mathbb{C}_{\frac{1}{2}}$. Therefore $\varphi\circ{D}$ coincides with $\widetilde{D}$ in the half-plane $\mathbb{C}_{\frac{1}{2}}$, so they must be the same Dirichlet series. The closed-graph theorem gives that not only is $C_{D}$ well defined, but also continuous.\\

In order to find the series that is not of composition we provide now a necessary and sufficient condition for a Dirichlet series to be so.
\begin{prop} \label{andueza}
Let $D \in \mathcal{H}_{+}$, then $D$ is of composition in $\mathcal{H}_{+}$ if and only if for every $m \in \N$ there is a constant $C>0$ such that 
\begin{equation} \label{murdoch}
	\Vert D^k \Vert_{2,m}^{\frac{1}{k}} \leq C \,,
\end{equation}
for every $k \in \N$.
\end{prop}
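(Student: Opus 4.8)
The plan is to exploit the formal identity $\varphi \circ D = \sum_{k=0}^{\infty} c_{k} D^{k}$ whenever $\varphi(z) = \sum_{k=0}^{\infty} c_{k} z^{k}$ is entire, and to translate the root condition \eqref{murdoch} into the convergence of this series in the Fr\'echet space $\mathcal{H}_{+}$.

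For sufficiency I would fix an entire $\varphi = \sum c_{k} z^{k}$ and a seminorm $\Vert \cdot \Vert_{2,m}$. Writing $C_{m} := \sup_{k} \Vert D^{k} \Vert_{2,m}^{1/k}$ (finite by hypothesis), subadditivity of the seminorm gives $\big\Vert \sum_{k=N}^{M} c_{k} D^{k} \big\Vert_{2,m} \le \sum_{k=N}^{M} |c_{k}| C_{m}^{k}$. Since $\varphi$ is entire its coefficients satisfy $\limsup_{k} |c_{k}|^{1/k} = 0$, so $\sum_{k} |c_{k}| C_{m}^{k} < \infty$ for every $m$; hence the partial sums $\sum_{k=0}^{N} c_{k} D^{k}$ form a Cauchy sequence in each seminorm and, by completeness, converge in $\mathcal{H}_{+}$ to some $\widetilde{D}$. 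It then remains to identify $\widetilde{D}$ with $\varphi \circ D$: convergence in $\mathcal{H}_{+}$ forces pointwise convergence on $\mathbb{C}_{1/2}$, since every element of $\mathcal{H}_{+}$ converges absolutely on $\mathbb{C}_{1/2}$ by \eqref{sigma-aes} and the evaluation $D \mapsto D(s)$ at a fixed $s \in \mathbb{C}_{1/2}$ is continuous for $\Vert \cdot \Vert_{2,m}$ with $m$ large (exactly the estimate in Remark~\ref{convergencia}); on the other hand $\sum_{k=0}^{N} c_{k} D(s)^{k} \to \varphi(D(s))$ for each fixed $s$ because $\varphi$ is entire. Thus $\widetilde{D}$ and $\varphi \circ D$ agree on $\mathbb{C}_{1/2}$, so $\varphi \circ D \in \mathcal{H}_{+}$ and $D$ is of composition.

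For necessity I would invoke the fact, established in the paragraph preceding the statement via the closed graph theorem, that if $D$ is of composition then the operator $C_{D} : H(\mathbb{C}) \to \mathcal{H}_{+}$, $\varphi \mapsto \varphi \circ D$, is continuous. As the topology of $H(\mathbb{C})$ is generated by the seminorms $\varphi \mapsto \sup_{|z| \le r} |\varphi(z)|$, continuity yields, for each $m$, a radius $r > 0$ and a constant $C > 0$ with $\Vert \varphi \circ D \Vert_{2,m} \le C \sup_{|z| \le r} |\varphi(z)|$ for all entire $\varphi$. Testing this inequality on the monomials $\varphi(z) = z^{k}$, for which $\varphi \circ D = D^{k}$ and $\sup_{|z| \le r} |z^{k}| = r^{k}$, gives $\Vert D^{k} \Vert_{2,m} \le C r^{k}$, whence $\Vert D^{k} \Vert_{2,m}^{1/k} \le C^{1/k} r \le \max\{C,1\}\, r$ for every $k$, which is precisely \eqref{murdoch}.

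The estimates are routine; the only point needing care is the identification of the $\mathcal{H}_{+}$-limit $\widetilde{D}$ with the holomorphic function $\varphi \circ D$ in the sufficiency direction, which I would settle by the pointwise-convergence argument above together with the uniqueness of the Dirichlet series representation on $\mathbb{C}_{1/2}$. The necessity direction is essentially immediate once the continuity of $C_{D}$ is available, so the main conceptual step there is simply to recognize that evaluating against monomials converts continuity in the compact-open topology exactly into the root bound \eqref{murdoch}.
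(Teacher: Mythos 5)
Your proposal is correct and follows essentially the same route as the paper: necessity by testing the continuity of $C_{D}\colon H(\mathbb{C})\to\mathcal{H}_{+}$ (obtained from the closed graph theorem in the preceding paragraph) on the monomials $z^{k}$, and sufficiency by summing the seminorm estimates $\sum_{k}|c_{k}|\,\Vert D^{k}\Vert_{2,m}\leq\sum_{k}|c_{k}|C^{k}<\infty$. The only difference is that you spell out the identification of the $\mathcal{H}_{+}$-limit with $\varphi\circ D$ via pointwise evaluation on $\mathbb{C}_{1/2}$, a step the paper leaves implicit (it is the same argument used in Example~\ref{exam: funcion superposicion}).
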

\begin{proof}
Suppose $D$ is of composition in $\mathcal{H}_{+}$ then the operator $C_{D}: H(\mathbb{C}) \to \mathcal H_{+}$ is continuous. 
Given $m \in \mathbb N$, there is a constant $A>0$ and $j\in \N$ so that $\Vert C_{D}(\varphi)\Vert_{2,m}\leq A \sup_{ \vert z \vert \leq j}\vert \varphi(z) \vert$. Taking $\varphi(z)=z^k$ we have
\[
	\Vert D^{k} \Vert_{2,m} \leq A \sup_{|z|\leq j} |z|^{k} \leq Aj^{k}  \leq C^k \,.
\]
for $C>0$ sufficiently large.\\	
For the converse if $\varphi (z) = \sum_{k=0}^\infty a_k z^k$, note that if \eqref{murdoch} holds then 
\[
	\sum_{k=0}^\infty \vert a_k \vert \Vert D^k \Vert_{2,m} \leq \sum_{k=0}^\infty \vert a_k \vert C^k < \infty
\]
for every $m \in \N$. Then the series converges and the operator given by $C_{D}(\varphi)=\sum_{k=0}^\infty a_k D^k$ is well defined.
\end{proof}

With this we can give an example of a Dirichlet series in $\mathcal{H}_{+}$ that is not of composition, answering the second question in Theorem~\ref{marianne}. Let us recall that he first Chebishev function $\vartheta (x)= \sum_{\substack{\mathfrak{p} \leq x \\ \mathfrak{p} \text{ prime}}} \log(\mathfrak{p})$ satisfies
\begin{equation} \label{chebishev}
\lim_{x \to \infty} \frac{\vartheta (x)}{x} =1 \,.
\end{equation}
This is equivalent to the prime number theorem, and an accurate estimate can be found at \cite{rosserschoenfeld1962}).

\begin{ej} \label{ejemplo}
Let us consider $D=\sum\limits_{n=1}^{\infty}{\frac{1}{\sqrt{n}}n^{-s}}$, that obviously belongs to $\mathcal{H}_{+}$, and let us see that it is not a composition Dirichlet series. Observe first that $D(s) = \zeta (s+ \frac{1}{2})$ for every $s \in \mathbb{C}_{\frac{1}{2}}$ (being $\zeta$ the Riemann's zeta function) and that, then, 
\[
	D^{k}=\sum_{n=1}^{\infty}\frac{d_{k}(n)}{\sqrt{n}}n^{-s} \,,
\]
where the $d_{k}(n)$s are the coefficients of $\zeta^{k}$, that is $d_{k}(n)=\sum_{n_{1}\cdots{n_{k}}=n}1$.\\
Fix $m \in \mathbb{N}$, define $\sigma = \frac{1}{2m}$ and consider $0 < \delta < 1$ such that $\omega=\frac{2(1-\delta)}{1+\delta}-(1+\sigma)(1+\delta)>0$ (note that such a $\delta$ exists because the previous expression is positive for $\delta =0$). \\
By \eqref{pi} and \eqref{chebishev} we can choose $x_{0}$ so that
\begin{equation} \label{dolors}
(1- \delta ) \frac{x}{\log x} \leq \pi (x) \leq (1+ \delta ) \frac{x}{\log x}  \, \text{ and } \,
(1 - \delta) x \leq \vartheta (x) \leq (1+ \delta) x
\end{equation}
for every $x \geq x_{0}$. Pick now $k_{0} \in \mathbb{N}$ so that $k_{0} \geq x_{0}$ and, for each $k \geq k_{0}$ define $x_{k}=k^{1+\delta}$ and $n_{k} = \prod_{\mathfrak{p} \leq x_{k}} \mathfrak{p} = e^{\vartheta (x_{k})}$. Observe that in this case $d_{k}(n_{k})=k^{\pi(x_{k})}$. Then, taking \eqref{dolors} into account we get
\begin{multline*}
	\bigg( \frac{d_{k}^{2}(n_{k})}{n_{k}^{1+\frac{1}{2m}}} \bigg)^{\frac{1}{k}}
	=\bigg( \frac{k^{2\pi(x_{k})}}{e^{\vartheta(x_{k})(1+\frac{1}{2m})}} \bigg)^{\frac{1}{k}}
	= \exp\bigg(\tfrac{1}{k} \Big( 2 \pi(x_{k}) \log(k) - ( 1+\sigma ) \vartheta(x_{k}) \Big) \bigg) \\
	>
	\exp \bigg( \tfrac{k^{1+\delta}}{k} \Big( \tfrac{2(1-\delta)}{1+\delta}-(1+\sigma)(1+\delta) \Big) \bigg)\
	=e^{k^{\delta}\omega} \,.
\end{multline*}
Hence
\[
	\Vert D^{k} \Vert_{2,4m}^{\frac{1}{k}}
	= \Bigg( \sum_{n=1}^{\infty} \frac{d_{k}^{2}(n)}{n^{1+\frac{1}{2m}}} \Bigg)^{\frac{1}{2k}}
	\geq \Bigg( \frac{d_{k}^{2}(n_{k})}{n_{k}^{1+\frac{1}{2m}}} \Bigg)^{\frac{1}{2k}} 
	> e^{k^{\delta}\frac{\omega}{2}} \,.
\]
Proposition~\ref{andueza} gives that $D$ cannot be a composition Dirichlet series in $\mathcal{H}_{+}$.
\end{ej}

Our last result shows that, in some sense, if we want $\varphi$ to define a superposition operator, then its coefficients have to go to $0$ quite fast (recall also Example~\ref{exam: funcion superposicion}).\\
Let us note that if $S_{\varphi}$ is of superposition then $S_{\varphi}(\zeta(s+\frac{1}{2})) \in \mathcal{H}_{+}$, in particular $\varphi\left(\zeta(s+\frac{1}{2}+\varepsilon)\right) \in \mathcal{H}^{2}$ for all $\varepsilon>0$. Rearranging terms we have 
\[
\sum_{n=1}^{\infty} \Bigg( \sum_{k=0}^{\infty} a_{k} d_{k}(n) \Bigg) \frac{1}{n^{\frac{1}{2}+\varepsilon}}n^{-s} \in \mathcal{H}^{2}\,.
\]
What we are going to do now is to show that there are functions for which this does not hold.

\begin{prop}
For every $0 <C< 2 $ there exists some $\varepsilon >0$ so that the series
\begin{equation*}
 \sum_{n=1}^{\infty} \Bigg( \sum_{k=0}^{\infty} \frac{ d_k(n) }{e^{k^C}} \Bigg)
 \frac{1}{n^{\frac{1}{2}+\varepsilon}}
 n^{-s}
\end{equation*}{}
does not belong to $\mathcal{H}^{2}$. In particular the function $\varphi (z) = \sum_{k=0}^{\infty} \frac{1}{e^{k^{C}}} z^{k}$ does not define a superposition operator.
\end{prop}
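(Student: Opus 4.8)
The plan is to use the characterisation (recalled in the introduction) that $\mathcal{H}^{2}$ consists exactly of the Dirichlet series with square-summable coefficients. Writing $b_{n} = \big(\sum_{k\ge 0} d_{k}(n)/e^{k^{C}}\big)\, n^{-1/2-\varepsilon}$ for the coefficient of $n^{-s}$ in the displayed series, membership in $\mathcal{H}^{2}$ is equivalent to $\sum_{n}|b_{n}|^{2}<\infty$, i.e.
\[
\sum_{n=1}^{\infty} \Big(\sum_{k=0}^{\infty} \frac{d_{k}(n)}{e^{k^{C}}}\Big)^{2} \frac{1}{n^{1+2\varepsilon}} < \infty .
\]
Since every $d_{k}(n)\ge 0$, I would prove divergence of this sum for a suitable $\varepsilon$ by keeping, for each $k$, a single well-chosen term. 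I reuse the numbers from Example~\ref{ejemplo}: set $x_{k}=k^{a}$ for an exponent $a$ to be fixed below, and $n_{k}=\prod_{\mathfrak{p}\le x_{k}}\mathfrak{p}=e^{\vartheta(x_{k})}$, so that $n_{k}$ is squarefree with $\pi(x_{k})$ prime factors and hence $d_{k}(n_{k})=k^{\pi(x_{k})}$. For $k$ large the $n_{k}$ are pairwise distinct, so bounding the inner sum below by its single $j=k$ term yields
\[
\sum_{n} b_{n}^{2} \;\ge\; \sum_{k\ge k_{0}}\Big(\frac{d_{k}(n_{k})}{e^{k^{C}}}\Big)^{2}\frac{1}{n_{k}^{1+2\varepsilon}}
= \sum_{k\ge k_{0}}\exp\!\Big(2\pi(x_{k})\log k - 2k^{C} - (1+2\varepsilon)\vartheta(x_{k})\Big).
\]

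The heart of the matter is to choose the parameters so that this exponent tends to $+\infty$. Fixing a slack $\eta>0$ and invoking the prime number theorem \eqref{pi} and \eqref{chebishev} in the two-sided form $\pi(x)\ge(1-\eta)\frac{x}{\log x}$ and $\vartheta(x)\le(1+\eta)x$ valid for $x\ge x_{0}$ (as in \eqref{dolors}), and using $\log x_{k}=a\log k$, the exponent is bounded below by
\[
\Big(\tfrac{2(1-\eta)}{a}-(1+2\varepsilon)(1+\eta)\Big)\,k^{a} - 2k^{C}.
\]
This is the only real obstacle, and it is exactly where the hypothesis $C<2$ enters: I must make the coefficient of $k^{a}$ strictly positive while simultaneously keeping $a>C$, so that the $+k^{a}$ term dominates the competing $-2k^{C}$ produced by squaring the coefficient $e^{-k^{C}}$. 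The key point — and the place where one must \emph{not} simply copy Example~\ref{ejemplo}, where the exponent in $x_{k}$ was tied to the prime-number-theorem slack — is to decouple the exponent $a$ from $\eta$. Since $C<2$ I first fix any $a$ with $C<a<2$; then the limiting value $\tfrac{2}{a}-1$ of the coefficient (as $\eta,\varepsilon\to 0$) is strictly positive because $a<2$, so by continuity one can choose $\eta>0$ and $\varepsilon>0$ with $\omega:=\tfrac{2(1-\eta)}{a}-(1+2\varepsilon)(1+\eta)>0$. With these choices the exponent is at least $\omega k^{a}-2k^{C}\to+\infty$ since $a>C$; hence the general term does not tend to $0$ and the series of nonnegative terms diverges. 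Thus $(b_{n})_{n}\notin\ell_{2}$ and the Dirichlet series does not belong to $\mathcal{H}^{2}$.

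Finally, the \emph{in particular} assertion follows from the discussion preceding the statement: if $\varphi(z)=\sum_{k} e^{-k^{C}}z^{k}$ defined a superposition operator on $\mathcal{H}_{+}$, then $S_{\varphi}(\zeta(\cdot+\tfrac{1}{2}))\in\mathcal{H}_{+}$ would force $\varphi(\zeta(s+\tfrac{1}{2}+\varepsilon))\in\mathcal{H}^{2}$ for \emph{every} $\varepsilon>0$, which upon rearranging the terms is precisely the series treated above. Since we have produced one $\varepsilon>0$ for which it fails to lie in $\mathcal{H}^{2}$, no such superposition operator can exist.
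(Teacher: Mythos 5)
Your argument is correct and follows essentially the same route as the paper: both lower-bound the coefficient of $n_{k}^{-s}$ with $n_{k}=\prod_{\mathfrak{p}\le x_{k}}\mathfrak{p}$ by the single term $d_{k}(n_{k})/e^{k^{C}}=k^{\pi(x_{k})}/e^{k^{C}}$, take $x_{k}=k^{C'}$ with $C<C'<2$ decoupled from the prime-number-theorem slack, and conclude that the resulting exponent $\omega k^{C'}-O(k^{C})$ tends to $+\infty$, so the coefficients cannot be square-summable. The only cosmetic difference is that you work with the squared coefficients $\sum_{n}b_{n}^{2}$ while the paper shows directly that $b_{n_{k}}\to\infty$; these are equivalent.
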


\begin{proof}
Considering as before $ n:= \prod_{\mathfrak{p} \leq x} \mathfrak{p} $ with large enough $x$
we get
\begin{equation}\label{e:lowboundexpkC}
 \left( \sum_{k=0}^{\infty} \frac{ d_k(n) }{e^{k^C}} \right)
 \frac{1}{n^{\frac{1}{2}+\eps}} \geq 
 \frac{ d_k(n) }{e^{k^C}} 
 \frac{1}{n^{\frac{1}{2}+\eps}} 
  = 
  \exp\left(
  \log(k)\pi(x) - k^C - \vartheta(x)\left(\frac{1}{2} + \eps\right)
  \right).
\end{equation}
For a given $\delta >0 $, this last exponent is bounded below by
\[
    x  \frac{\log k}{\log x}(1-\delta) - k^C
    -x\left(\frac{1}{2} + \eps\right)(1+\delta)   
\]
as long as \eqref{dolors} holds.

Let us choose $C'$ such that $C < C '< 2, $ $x_k := k^{C'}$ and $\delta,\eps >0$
small enough to that
\begin{align*}
      \omega := \frac{1-\delta}{C '} - \left(\frac{1}{2} + \eps\right)(1+\delta)    >  0.
\end{align*}
Then \eqref{e:lowboundexpkC} becomes
\begin{align}
 \left( \sum_{k=0}^{\infty} \frac{ d_k(n) }{e^{k^C}} \right)
 \frac{1}{n^{\frac{1}{2}+\eps}} \geq &
  \exp\left(
  \omega k^{C'} - k^C 
  \right)
\end{align}
which tends to $+\infty$ with $k$.
\end{proof}

The same argument shows that if $\varphi (z) = \sum_{k=0}^{\infty} a_{k} z^{k}$ is such that  there exist $m>0$ with $a_{k} e^{k^{C}}>m$ for large enough $k$ and $1<C<2$, then  it cannot define a superposition operator.
In particular, the exponential function
$\exp(z) = \sum z^k /k!$ does not define a superposition operator,
since $\log (k!) = k \log k + O(k)$.

\section{Differentiation and integration operators}  \label{sect:dif integ}

We finish this note by looking at the classical differentiation operator (that brings a holomorphic function to its derivative) and at its inverse, the integration operator. These two operators, defined on the space $\mathcal{H}_{+}^{\infty}$ have been studied in \cite{bonetoperator}. The situation in $\mathcal{H}_{+}$ is quite close and the arguments are rather similar, so we  sketch them, only pointing out the steps on which they are different. \\

As we have already explained a Dirichlet series $D=\sum a_{n} n^{-s}$ defines a holomorphic function on $\mathbb{C}_{\sigma_{c}(D)}$. Then its derivative is again a Dirichlet series obtained simply by differentiating term by term, that is 
\[
D'(s)=
-\sum_{n=2}^{\infty} a_{n}\log(n) n^{-s} \,,
\]
and has the same abscissa of convergence as $D$ (see e.g. \cite[Theorem~11.12]{apostol1976}). We can then consider the differentiation operator $\mathbf{D} : \mathcal{D} \to \mathcal{D}$ defined as $\mathbf{D}(D)= D'$.

\begin{prop} \label{petenera}
The differentiation operator $\mathbf{D} : \mathcal{H}_{+} \to \mathcal{H}_{+}$ is continuous and satisfies
\[
\mathbf{D}(\mathcal{H}_{+}) = \mathcal{H}_{+,0} := \Big\{\sum{a_{n} n^{-s}}\in\mathcal{H}_{+} \colon  a_{1}=0 \Big\} \,.
\]
\end{prop}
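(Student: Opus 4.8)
The plan is to work entirely with the explicit coefficient description of the seminorms. Writing $D = \sum_{n} a_n n^{-s}$, differentiation acts as a coefficient multiplier, $\mathbf{D}(D) = -\sum_{n \geq 2} a_n \log(n)\, n^{-s}$, so in the Köthe-space picture of Remark~\ref{koethe} it is simply the diagonal map sending $(a_n)_n$ to $(-a_n \log n)_n$ (annihilating the $n=1$ entry). Since each seminorm is Hilbertian, one has $\|\mathbf{D}(D)\|_{2,k}^2 = \sum_{n \geq 2} \frac{|a_n|^2 (\log n)^2}{n^{2/k}}$, and the whole argument reduces to comparing the weights $\frac{(\log n)^2}{n^{2/k}}$ against $\frac{1}{n^{2/l}}$.

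For continuity I would fix $k$ and choose any $l > k$. Since $2/k - 2/l > 0$, the quantity $(\log n)^2 / n^{2/k - 2/l}$ tends to $0$ as $n \to \infty$ and is therefore bounded by some constant $C_{k,l}$ on $\{n \geq 2\}$; equivalently $(\log n)^2 \leq C_{k,l}\, n^{2/k - 2/l}$ for all $n \geq 2$. Substituting this bound termwise yields
\[
\|\mathbf{D}(D)\|_{2,k}^2 \leq C_{k,l} \sum_{n \geq 2} \frac{|a_n|^2}{n^{2/l}} \leq C_{k,l} \|D\|_{2,l}^2 ,
\]
so $\|\mathbf{D}(D)\|_{2,k} \leq \sqrt{C_{k,l}}\, \|D\|_{2,l}$. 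This simultaneously shows that $\mathbf{D}(D) \in \mathcal{H}_+$ for every $D \in \mathcal{H}_+$ (each seminorm being finite) and that $\mathbf{D}$ is continuous, since every target seminorm is dominated by a single source seminorm.

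For the range, the inclusion $\mathbf{D}(\mathcal{H}_+) \subseteq \mathcal{H}_{+,0}$ is immediate because the constant coefficient of any derivative vanishes. For the reverse inclusion I would take $E = \sum_{n \geq 2} b_n n^{-s} \in \mathcal{H}_{+,0}$ and produce a preimage by inverting the multiplier: set $a_1 = 0$ and $a_n = -b_n / \log n$ for $n \geq 2$, and let $D = \sum a_n n^{-s}$. The only point to verify is $D \in \mathcal{H}_+$, and here the crucial (but elementary) feature is that $\log n$ is bounded below by $\log 2 > 0$ on the relevant range. Thus
\[
\|D\|_{2,k}^2 = \sum_{n \geq 2} \frac{|b_n|^2}{(\log n)^2\, n^{2/k}} \leq \frac{1}{(\log 2)^2} \sum_{n \geq 2} \frac{|b_n|^2}{n^{2/k}} = \frac{1}{(\log 2)^2} \|E\|_{2,k}^2 < \infty
\]
for every $k$, so $D \in \mathcal{H}_+$, and by construction $\mathbf{D}(D) = -\sum_{n \geq 2} a_n \log(n)\, n^{-s} = E$.

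The argument carries no serious obstacle; the only thing requiring care is the direction of the weight comparison. For continuity one must \emph{raise} the index ($l > k$) so that the polynomial gain $n^{2/k - 2/l}$ absorbs the squared logarithm, whereas for surjectivity the division by $\log n$ is harmless precisely because the $n=1$ term (where $\log n = 0$) is excluded in $\mathcal{H}_{+,0}$ and $\log n \geq \log 2$ elsewhere. Both features are special to the translated setting and make the proof markedly cleaner than the $\mathcal{H}_+^{\infty}$ analogue treated in \cite{bonetoperator}.
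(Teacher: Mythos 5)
Your proof is correct and follows essentially the same route as the paper: continuity is obtained by absorbing the factor $(\log n)^{2}$ into the polynomial gain coming from raising the seminorm index (the paper makes the specific choice $l=2k$, writing $C=\sup_{n}\log n/n^{1/(2k)}$), and surjectivity onto $\mathcal{H}_{+,0}$ is obtained by inverting the multiplier, which is harmless since $\log n\geq\log 2$ for $n\geq 2$. No substantive differences.
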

\begin{proof}
Let $D=\sum a_{n}n^{-s} \in \mathcal{H}_{+}$, fix $k\in\mathbb{N}$ and set $C = \sup_{n} \frac{\log n}{n^{1/(2k)}}$. Then
\[
\Vert D' \Vert_{2,k}
 = \bigg( \sum_{n=2}^{\infty} \frac{ \vert a_{n} \vert^{2}}{n^{1/k}} \frac{\log(n)^{2}}{n^{1/k}} \bigg)^{\frac{1}{2}}
\leq C \bigg( \sum\limits_{n=2}^{\infty} \frac{\vert a_{n} \vert^{2}}{n^{1/k}} \bigg)^{\frac{1}{2}}
\leq C \bigg( \sum\limits_{n=1}^{\infty} \frac{\vert a_{n} \vert^{2}}{n^{2/(2k)}} \bigg)^{\frac{1}{2}}
= C \Vert D \Vert_{2,2k} \,,
\]
and $\mathbf{D}$ is well defined and continuous.\\
Clearly $\mathbf{D}(\mathcal{H}_{+}) \subseteq \mathcal{H}_{+,0}$.  On the other hand if $D=\sum_{n=2}^{\infty} a_{n}n^{-s} \in \mathcal{H}_{+,0}$, then is plain that $\widetilde{D}=\sum_{n=2}^{\infty} \frac{-a_{n}}{\log(n)}n^{-s} \in \mathcal{H}_{+}$ and $\mathbf{D}(\widetilde{D})=D$, hence $\mathcal{D}(\mathcal{H}_{+})=\mathcal{H}_{+,0}$.
\end{proof}

A simple computation shows that, for each fixed $N$, the coefficient operator $\sum a_{n} n^{-s} \mapsto a_{N}$ is continuous on $\mathcal{H}_{+}$. Then the space  $\mathcal{H}_{+,0}$ that we have just defined is closed. \\

We are also interested in the inverse operator $\mathbf{J}$ defined for Dirichlet series $\sum a_{n}n^{-s}$ for which $a_{1}=0$ as follows
\[
\mathbf{J} \Big(\sum_{n=2}^{\infty} a_{n}n^{-s} \Big) = - \sum_{n=2}^{\infty} \frac{a_{n}}{\log(n)}n^{-s} \,.
\]
Considered as an operator $\mathcal{H}_{+,0} \to \mathcal{H}_{+,0}$, it is clearly well defined and continuous, since
\[
\Vert \mathbf{J}(D) \Vert_{2,k} 
= \bigg( \sum_{n=2}^{\infty} \frac{\vert a_{n} \vert^{2}}{n^{2/k}\log(n)^{2}} \bigg)^{\frac{1}{2}}
\leq \bigg( \sum_{n=2}^{\infty} \frac{\vert a_{n} \vert^{2}}{n^{2/k}} \bigg)^{\frac{1}{2}} = \Vert D \Vert_{2,k}
\]
for every $D=\sum_{n=2}^{\infty} a_{n} n^{-s} \in \mathcal{H}_{+,0}$. A straightforward computation shows that 
$\mathbf{D}\mathbf{J}(D)=D=\mathbf{J}\mathbf{D}(D)$ for all $D\in \mathcal{H}_{+,0}$. Exactly the same argument as in \cite[Theorem~2.3(iii)]{bonetoperator} shows that neither $\mathbf{D}$, nor $\mathbf{J}$ are compact operators.\\

Suppose that $D$ and $E$ are two Dirichlet series with $\sigma_{a}(D), \sigma_{a}(E) < \infty$. By \cite[Theorems~11.12 and~11.10]{apostol1976} $D'$ has also finite abscissa of absolute convergence and the product $D'E = \sum c_{n}n^{-s}$ again converges absolutely at some half-plane. Note also that $c_{1}=0$, and then we may consider $\mathbf{J}(D'E)$. In this way, fixing $D$ we define a Volterra-type operator $\mathbf{V}_{D} : \mathcal{D} \to \mathcal{D}$ given by $\mathbf{V}_{D} (E) = \mathbf{J}(D'E)$. The action of such operators on Hardy spaces was thoroughly studied in \cite{brevigperfektseip2019}, where deep results were given. Later, in \cite[Corollary~2.4]{bonetoperator}, it was shown that the situation in $\mathcal{H}_{+}^{\infty}$ is much easier to handle. Exactly the same arguments as there show that this is also the case in $\mathcal{H}_{+}$, and $\mathbf{V}_{D} : \mathcal{H}_{+} \to \mathcal{H}_{+}$ is well defined (and continuous) if and only if $D \in \mathcal{H}_{+}$.\\

We finish this note by looking at the spectrum of the differentiation and integration operators, in the same spirit as \cite[Theorem~2.6]{bonetoperator}. Let us recall that the resolvent of a linear operator $T:X \to X$ (where $X$ is some Fr\'echet space) is defined as the set $\rho(T,X)$ consisting of those $\lambda \in \mathbb{C}$ for which $(\lambda I - T)$ is bijective and its inverse is continuous. Then the spectrum of $T$ is $\sigma (T,X) = \mathbb{C} \setminus \rho(T,X)$. 

\begin{prop}
We have the following characterization of the spectrums:
\begin{enumerate}
	\item $\sigma (\mathbf{D}, \mathcal{H}_{+,0}) = \{ - \log n \colon n \in \mathbb{N}, \, n \geq 2  \}$.
	\item $\sigma (\mathbf{D}, \mathcal{H}_{+}) = \{0\} \cup \{ - \log n \colon n \in \mathbb{N}, \, n \geq 2  \}$.
	\item $\sigma (\mathbf{J}, \mathcal{H}_{+,0}) = \{ - \frac{1}{\log n} \colon n \in \mathbb{N}, \, n \geq 2  \}$.
\end{enumerate}
\end{prop}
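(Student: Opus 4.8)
The plan is to use that all three operators act diagonally on the monomial Schauder basis $\{e_{n}\}$ of Lemma~\ref{schauder}, where $e_{n}(s)=n^{-s}$. Concretely $\mathbf{D}e_{n}=-\log(n)\,e_{n}$ and $\mathbf{J}e_{n}=-\tfrac{1}{\log n}\,e_{n}$ for $n\geq 2$, while on $\mathcal{H}_{+}$ the constant $e_{1}=1$ satisfies $\mathbf{D}e_{1}=0$. Thus each $-\log n$ (with $n\geq 2$, together with $0$ in the case of $\mathbf{D}$ on $\mathcal{H}_{+}$) is an eigenvalue of $\mathbf{D}$, and each $-\tfrac{1}{\log n}$ an eigenvalue of $\mathbf{J}$; since $\lambda I-T$ then fails to be injective, all these numbers lie in the respective spectra. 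This settles the inclusion ``$\supseteq$'' in all three statements.

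For the reverse inclusion I would show that every $\lambda$ \emph{not} on the corresponding list lies in the resolvent. Writing $D=\sum a_{n}n^{-s}$ and denoting by $\mu_{n}$ the eigenvalue attached to $e_{n}$ (so $\mu_{n}=-\log n$ for $\mathbf{D}$ and $\mu_{n}=-\tfrac{1}{\log n}$ for $\mathbf{J}$), the operator $\lambda I-T$ is diagonal and its only candidate inverse is the diagonal map $R_{\lambda}\big(\sum a_{n}n^{-s}\big)=\sum \tfrac{a_{n}}{\lambda-\mu_{n}}\,n^{-s}$. For such $\lambda$ the sequence $(\lambda-\mu_{n})_{n}$ has no zero entry, so $R_{\lambda}$ is well defined on coefficients and is a two-sided inverse of $\lambda I-T$; it only remains to check that $R_{\lambda}$ maps the space into itself continuously. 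For the differentiation operator $\mu_{n}=-\log n\to-\infty$, hence $|\lambda-\mu_{n}|\to\infty$ and, all terms being nonzero, $c_{\lambda}:=\inf_{n}|\lambda-\mu_{n}|>0$; then
\[
\Vert R_{\lambda}(D)\Vert_{2,k}^{2}=\sum_{n}\frac{|a_{n}|^{2}}{|\lambda-\mu_{n}|^{2}\,n^{2/k}}\leq \frac{1}{c_{\lambda}^{2}}\,\Vert D\Vert_{2,k}^{2},
\]
so $R_{\lambda}$ is continuous for each seminorm and $\lambda\in\rho$. This already gives statements (1) and (2): the eigenvalue list of $\mathbf{D}$ is exactly $\{-\log n: n\geq 2\}$ on $\mathcal{H}_{+,0}$ and $\{0\}\cup\{-\log n: n\geq 2\}$ on $\mathcal{H}_{+}$, the point $\lambda=0$ being forced into the spectrum of $\mathbf{D}$ on $\mathcal{H}_{+}$ by the kernel $e_{1}$, whereas on $\mathcal{H}_{+,0}$ one has $\inf_{n}\log n=\log 2>0$ and $0$ falls in the resolvent.

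For $\mathbf{J}$ the same estimate handles every $\lambda\neq 0$ off the list, since then $\lambda-\mu_{n}\to\lambda\neq 0$ forces $\inf_{n}|\lambda-\mu_{n}|>0$. The delicate point---and the step I expect to be the main obstacle---is $\lambda=0$, which is \emph{not} on the list $\{-\tfrac{1}{\log n}\}$ yet is a limit of it, so that $\inf_{n}|0-\mu_{n}|=\inf_{n}\tfrac{1}{\log n}=0$ and the crude bound above is useless. Here one exploits the special structure: the candidate inverse is $R_{0}\big(\sum a_{n}n^{-s}\big)=\sum a_{n}\log(n)\,n^{-s}=-\mathbf{D}(D)$, which by Proposition~\ref{petenera} maps $\mathcal{H}_{+,0}$ continuously into itself (with the seminorm shift $\Vert R_{0}(D)\Vert_{2,k}\leq C\Vert D\Vert_{2,2k}$). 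Since $\mathbf{D}$ and $\mathbf{J}$ are mutually inverse on $\mathcal{H}_{+,0}$, this gives $0\in\rho(\mathbf{J},\mathcal{H}_{+,0})$ and hence $\sigma(\mathbf{J},\mathcal{H}_{+,0})=\{-\tfrac{1}{\log n}: n\geq 2\}$, proving (3).

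Two remarks would guide the write-up. First, because each $\lambda I-T$ is continuous and the ambient spaces are Fr\'echet, bijectivity alone already yields continuity of the inverse through the open mapping theorem; nonetheless the explicit diagonal estimates above are cleaner and make the seminorm dependence transparent. Second, statement (3) displays a genuinely Fr\'echet-space phenomenon: the spectrum $\{-\tfrac{1}{\log n}\}$ is \emph{not} closed, since it accumulates at $0$, a point of the resolvent. Thus one must resist the Banach-space reflex of adjoining the limit point, and it is precisely the $\lambda=0$ analysis via $-\mathbf{D}$ that certifies this.
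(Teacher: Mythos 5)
Your argument is correct and follows essentially the same route as the paper: the displayed resolvent estimate $\Vert R_{\lambda}(D)\Vert_{2,k}\leq c_{\lambda}^{-1}\Vert D\Vert_{2,k}$ for $\lambda$ with $\inf_{n}\vert\lambda-\mu_{n}\vert>0$ is exactly the paper's inequality, and the remaining cases (which the paper defers to \cite[Theorem~2.6]{bonetoperator}) are handled by you with the same diagonal reasoning. Your explicit treatment of the accumulation point $\lambda=0$ for $\mathbf{J}$, using that the candidate inverse is $-\mathbf{D}$ and invoking Proposition~\ref{petenera}, correctly supplies the one genuinely delicate step that the paper leaves to the citation.
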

\begin{proof}
Take some $0 \neq \lambda \in \mathbb{C}$ so that $\lambda \neq - \log n$ for every natural number $n \geq 2$ and let us see that $\lambda \in \rho (\mathbf{D}, \mathcal{H}_{+,0})$. Note first that, for $\sum_{n \geq 2} a_{n} n^{-s} \in  \mathcal{H}_{+,0}$ we have
\[
(\lambda I - \mathbf{D}) \Big( \textstyle \sum_{n \geq 2} a_{n} n^{-s} \Big) = 
\displaystyle \sum_{n\geq 2} (\lambda + \log n)a_{n} n^{-s} \,.
\]
Choosing $\mu >0$ so that $\vert \lambda \vert >\mu$ and $\vert \log n + \lambda \vert > \mu$ for every  natural $n \geq 2$ we have
\begin{equation} \label{osresentidos}
\big\Vert \textstyle \sum_{n\geq 2} \frac{a_{n}}{\lambda + \log n} n^{-s} \big\Vert_{2,k}
= \displaystyle \Big( \sum_{n=2}^{\infty} \frac{\vert a_{n} \vert^{2}}{\vert \log n + \lambda \vert^{2} } \frac{1}{n^{2/k}} \Big)^{\frac{1}{2}}
< \frac{1}{\mu} \Big( \sum_{n=2}^{\infty} \frac{\vert a_{n} \vert^{2}}{n^{2/k}} \Big)^{\frac{1}{2}}
= \frac{1}{\mu} \big\Vert \textstyle \sum_{n\geq 2} a_{n} n^{-s} \big\Vert_{2,k} \,
\end{equation}
for every $k$. This shows that $\sum_{n\geq 2} \frac{a_{n}}{\lambda + \log n} n^{-s} \in \mathcal{H}_{+,0}$ and, then $(\lambda I - \mathbf{D}) : \mathcal{H}_{+,0} \to \mathcal{H}_{+,0}$ is surjective. Also \eqref{osresentidos} shows that the inverse $(\lambda I - \mathbf{D})^{-1}$ is continuous, giving our claim. The rest of the proof follows exactly as that of \cite[Theorem~2.6]{bonetoperator}.
\end{proof}

\paragraph*{Acknowledgements.} We would like to warmly thank Jos\'e Bonet, Andreas Defant and Manuel Maestre for enlightening remarks and comments and fruitful discussions that improved the paper. We would also like to thank the referees for their careful reading and helpful comments.\\

The research of T.~Fern\'andez Vidal was supported by PICT 2015-2299.

The research of D.~Galicer was partially supported by CONICET-PIP 11220130100329CO and 2018-04250. 

The research of M.~Mereb was partially supported by CONICET-PIP 11220130100073CO and PICT 2018-03511.

The research of P.~Sevilla-Peris was supported by MICINN and FEDER Project MTM2017-83262-C2-1-P and MECD grant PRX17/00040.

\noindent 
T.~Fern\'andez Vidal, D.~Galicer, M.~Mereb\\
Departamento de Matem\'{a}tica,
Facultad de Cs. Exactas y Naturales, Universidad de Buenos Aires and IMAS-CONICET. Ciudad Universitaria, Pabell\'on I (C1428EGA) C.A.B.A., Argentina, tfernandezvidal@yahoo.com.ar, dgalicer@dm.uba.ar, mmereb@gmail.com\\ 

\noindent P.~Sevilla-Peris\\	
Insitut Universitari de Matem\`atica Pura i Aplicada. Universitat Polit\`ecnica de Val\`encia. Cmno Vera s/n 46022, Spain, psevilla@mat.upv.es
\end{document}